\numberwithin{equation}{section}
\theoremstyle{plain}
\newtheorem{Proposition}[equation]{Proposition}
\newtheorem{Corollary}[equation]{Corollary}
\newtheorem*{Corollary*}{Corollary}
\newtheorem{Theorem}[equation]{Theorem}
\newtheorem*{Theorem*}{Theorem}
\newtheorem{Lemma}[equation]{Lemma}
\theoremstyle{definition}
\newtheorem{Definition}[equation]{Definition}
\newtheorem{Remark}[equation]{Remark}
\setlist[enumerate]{leftmargin=*}
\setlist[itemize]{leftmargin=*}
\setlist[enumerate,1]{label=(\alph*),font=\upshape}
\setlist[enumerate,2]{label=(\roman*),font=\upshape}
\newcommand{\R}{\mathbb{R}}
\newcommand{\C}{\mathbb{C}}
\newcommand{\D}{\mathbb{D}}
\newcommand{\N}{\mathbb{N}}
\renewcommand{\leq}{\leqslant}
\renewcommand{\geq}{\geqslant}
\renewcommand{\subset}{\subseteq}
\subjclass[2020]{Primary 47A15; Secondary 30C15, 30H99.}
\begin{document}
\title[Zeros in $\ell_A^p$]{Zeros of optimal polynomial approximants in $\ell^p_{A}$}
\author[Cheng]{Raymond Cheng}
\address{Department of Mathematics and Statistics, Old Dominion University, Norfolk, VA 23529, USA. } \email{rcheng@odu.edu}
\author[Ross]{William T. Ross}
\address{Department of Mathematics and Computer Science, University of Richmond, Richmond, VA 23173, USA. } \email{wross@richmond.edu}
\author[Seco]{Daniel Seco}
\address{Universidad Carlos III de Madrid and Instituto de Ciencias Matem\'aticas, Departamento de Matem\'aticas, Avenida de la Universidad 30, 28911 Legan\'es (Madrid), Spain.} \email{dseco@math.uc3m.es}

\date{\today}

\begin{abstract}
The study of inner and cyclic functions in $\ell^p_A$ spaces requires a better understanding of the zeros of the so-called optimal polynomial approximants. 
 We determine that a point of the complex plane is the zero of an optimal polynomial approximant for some element of $\ell^p_A$ if and only if it lies outside of a closed disk (centered at the origin) of a particular radius which depends on the value of $p$. We find the value of this radius for $p\neq 2$. In addition, for each positive integer $d$ there is
 a polynomial $f_d$ of degree at most $d$ that minimizes the modulus of 
the root of its optimal linear polynomial approximant.  We develop a method for finding these extremal functions $f_d$ and discuss their properties.  The method involves the Lagrange multiplier method and a resulting dynamical system.
\end{abstract}

\maketitle

\section{Introduction}\label{Intro}

For  $1 < p < \infty$, the space $\ell_{A}^{p}$ is the set of power series $f$ whose Taylor coefficients $(a_n)_{n = 0}^{\infty}$ belong to the standard sequence space $\ell^p(\N_{0})$.  This paper concerns the zeros of optimal polynomial approximants in $\ell^{p}_{A}$.   Every  $f \in \ell^{p}_{A}$ is an analytic function on the open unit disk $\D = \{z: |z| < 1\}$ and, endowed with the norm $\|f\|_{p} := \|(a_n)_{n = 0}^{\infty}\|_{\ell^p}$, $\ell^{p}_{A}$ is a Banach space. When $p = 2$, $\ell^{2}_{A}$ is a Hilbert space with inner product $\langle f, g\rangle := \sum_{n = 0}^{\infty} a_n \overline{b_n}$, where $g(z) = \sum_{n=0}^{\infty} b_n z^n$. In fact,  $\ell^{2}_{A}$ is the well-studied Hardy space. 

Our motivation stems from the ongoing exploration of  the invariant subspaces and the cyclic vectors for the shift operator 
$(S f)(z) = z f(z)$ on $\ell^p_{A}$. By {\em invariant subspace}, we mean a closed subspace $\mathscr{M} \subset \ell^{p}_{A}$ for which $S \mathscr{M} \subset \mathscr{M}$. By {\em cyclic vector,} we mean an $f \in \ell^{p}_{A}$ for which $\overline{\operatorname{span}} \{S^{n} f: n \in \N_{0}\} = \ell^{p}_{A}.$

Beurling's seminal paper \cite{MR27954} determined both the cyclic vectors and invariant subspaces of $\ell^{2}_{A}$. Indeed, $\mathscr{M}$ is an invariant subspace of $\ell^{2}_{A}$ if and only if $\mathscr{M} = \Theta \ell^{2}_{A} = \{\Theta f: f \in \ell^{p}_{A}\}$ for some inner function $\Theta$. A vector $f \in \ell^{2}_{A}$ is cyclic if and only if $f$ is an outer function. Both inner functions and outer functions have specific formulas through classical theorems of Nevanlinna and Riesz \cite{Dur, Gar}. Beurling pondered the invariant subspaces and cyclic vectors for $p \not = 2$, but the question remains very much unresolved. In fact, when $p > 2$, the structure of the invariant subspaces of $\ell^{p}_{A}$ is very complicated \cite{MR1878629}.

In this paper, we focus on one of the many obstacles to understanding the invariant subspaces and cyclic vectors for $\ell^{p}_{A}$ when $p \not = 2$ -- the ``extra zeros'' that appear in optimal polynomial approximants and in the related $p$-inner functions. 
To better explain what we mean here, let $\mathscr{P}_n$ be the set of polynomials of degree at most $n$, and let $\mathscr{P} := \bigcup_{n \geq 0}\mathscr{P}_n$.
Observe that $f \in \ell^{p}_{A}$ is cyclic if and only if there is a sequence $(p_n)_{n = 1}^{\infty}$ in $\mathscr{P}$ such that $\|1 - p_{n} f\|_{p} \to 0$. Various papers \cite{RemBen, BenRMI} have discussed \emph{optimal polynomial approximants}. These are polynomials $p_{n,f}$ defined as follows: For a non-constant $f \in \ell^{p}_{A}$ and $n \in \N $, let $p_{n, f}$  be the unique $p_{n, f} \in \mathscr{P}_n$ such that
$$\|1 - p_{n, f} f\|_{p} = \operatorname{dist}(1, f \mathscr{P}_{n}).$$ When $1 < p < \infty$, the uniform convexity of $\ell^{p}_{A}$ ensures the uniqueness of $p_{n, f}$.  When $p = 2$, it is known that the zeros of $p_{n, f}$ lie outside $\overline{\D}$, the closure of $\D$ \cite{BenJLMS} (see also Proposition \ref{prop311}). Perhaps one might argue this is the way it should be since the  polynomials that optimally attempt to approximate $1/f$ should not carry any zeros in $\D$.   When $p \neq 2$, however, we will see that an optimal polynomial approximant $p_{n, f}$ might have zeros inside $\overline{\D}$.  This stands in the way of it truly being a close approximation of $1/f$. 

These extra zeros also appear in several ways when trying to extend Beurling's treatment of the invariant subspaces  for the $p = 2$ case. One of the innovative tricks of Beurling was to show that if $f \in \ell^{2}_{A}$, then the unique solution $\widehat{f}$ to 
$$\|f - \widehat{f}\|_{2} = \inf_{\phi \in \mathscr{P}} \|f(z) - z \phi(z) f(z)\|_{2}$$
has the property that $f - \widehat{f}$ is an inner function (up to a multiplicative constant) and that $f$ and $f - \widehat{f}$ generate the same invariant subspace. This was taken up with much success in various other function spaces such as the Bergman and Dirichlet spaces in \cite{MR1440934, MR1259923}. What complicates matters when $p \neq 2$ is that the corresponding function $f - \widehat{f}$ might have a zero in $\D$ that is not a zero of $f$ (this is the origin of the term ``extra zero'').  Thus, unlike when $p = 2$, the functions $f$ and $f - \widehat{f}$ do not generate the same invariant subspace.  To see a concrete example of how an extra zero can arise when $p = \frac{4}{3}$, we refer the reader to \cite{CD}.

To measure the extent to which these extra zeros of $p_{n, f}$ occur, we define the following set $\Omega_p \subset \C$. 

\begin{Definition}
For fixed $1 < p < \infty$, a point $z_0 \in \C$ belongs to $\Omega_p$ if there exists an $f \in \ell^{p}_{A}$ with $f(0) \not = 0$ and an $n \in \N$ such that $p_{n, f}(z_0) = 0$. In other words $z_0 \in \Omega_p$ when $z_0$ is the zero of the optimal polynomial approximant for some $f \in \ell^{p}_{A}$ with $f(0) \not= 0$. 
\end{Definition}

%In this definition we have restricted our attention to the case $f$ is a polynomial, for in that case we will develop some methods for studying the roots of its optimal polynomial approximants.  These methods do not necessarily extend to arbitrary $f \in \ell^p_A$.  

The assumption that $f(0) \neq 0$ is equivalent to $p_{n,f}$ not being identically $0$ (Proposition \ref{Rnozerosisi2}). When $p = 2$, previous work from  \cite{BenJLMS} shows that $\Omega_2 = \C \backslash \overline{\D}$ (see also Proposition \ref{prop311}). In particular, no $p_{n, f}$ for any non-constant $f \in \ell^{2}_{A}$ has a zero in $\D$. Our main result is the following. 

\begin{Theorem}\label{MAIN}
For $1 < p < \infty$, with $p \not = 2$, there exists a $\tau_p \in (1,2)$ such that $\Omega_{p} = \C \backslash \frac{1}{\tau_p} \overline{\D}$.
\end{Theorem}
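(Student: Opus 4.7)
The plan is to reduce to $n=1$ optimal linear approximants, characterize $\Omega_p$ geometrically via rotation, openness, and connectedness, and then establish $\tau_p \in (1,2)$.

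First I would reduce to linear approximants: $z_0 \in \Omega_p$ iff there exists $F \in \ell^p_A$ with $F(0) \neq 0$ and $p_{1,F}(z_0) = 0$. The nontrivial direction: given $p_{n,f}(z_0) = 0$ with $f(0)\neq 0$, factor $p_{n,f}(z) = (z-z_0)q(z)$; by Proposition~\ref{Rnozerosisi2}, $p_{n,f}(0)\neq 0$, so $z_0 \neq 0$ and $q(0)\neq 0$. Setting $F := qf \in \ell^p_A$, the identities $(z-z_0)F = p_{n,f}f$ and the inclusion $F\mathscr{P}_1 \subseteq f\mathscr{P}_n$ sandwich $\operatorname{dist}(1, F\mathscr{P}_1) = \|1 - (z-z_0)F\|_p = \operatorname{dist}(1, f\mathscr{P}_n)$, and uniqueness of the optimal approximant forces $p_{1,F}(z) = z-z_0$.

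Next, I would establish three structural facts. The exterior $\{|z|>1\}$ lies in $\Omega_p$ because $F(z) = 1/(z-z_0)$ has geometrically decaying Taylor coefficients whenever $|z_0|>1$ and satisfies $(z-z_0)F\equiv 1$, making $p_{1,F}(z) = z-z_0$ trivially optimal with $\operatorname{dist}(1,F\mathscr{P}_1)=0$. Rotational invariance follows from the isometry $F \mapsto F(e^{-i\theta}\cdot)$ of $\ell^p_A$, which preserves the variational problem and rotates the zero of $p_{1,F}$ by $e^{i\theta}$. And the map $\Phi : \{F \in \ell^p_A : F(0) \neq 0\} \to \mathbb{C} \cup \{\infty\}$ sending $F$ to the zero of $p_{1,F}$ (with $\infty$ when $p_{1,F}$ is constant) is continuous on a path-connected domain (the complement of a closed hyperplane in $\ell^p_A$), so its image is connected. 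Taking moduli, $\{|z_0|: z_0 \in \Omega_p\}$ is a connected subset of $(0,\infty)$ containing $(1,\infty)$, hence an interval $(R_p,\infty)$ or $[R_p,\infty)$ for some $R_p \leq 1$. Combined with rotational invariance, $\Omega_p = \mathbb{C} \setminus R_p \overline{\mathbb{D}}$, with the endpoint excluded because no $F \in \ell^p_A$ attains $R_p$ (this non-attainment should fall out of the compactness/extremality analysis of the $f_d$'s dynamical system described in the abstract). Setting $\tau_p := 1/R_p$ recovers the statement.

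The main obstacle is pinning down $R_p \in (1/2, 1)$, i.e., $\tau_p \in (1,2)$. For $R_p < 1$, I must exhibit, for each $p \neq 2$, some $F\in\ell^p_A$ whose $p_{1,F}$ has a root strictly inside $\mathbb{D}$. An explicit instance at $p = 4/3$ appears in \cite{CD}; for general $p\neq 2$, the Lagrange-multiplier analysis of the extremals $f_d$ -- central to the paper -- should produce such examples at sufficiently high degree, exploiting the fact that at $p=2$ the extremal root sits on $\partial\mathbb{D}$ and the non-Hilbertian geometry of $\ell^p$ pushes it inward when $p\neq 2$. For $R_p > 1/2$, I would write the first-order optimality conditions for $p_{1,F}(z) = \alpha + \beta z$ using the $\ell^p$-to-$\ell^{p'}$ duality map $J_p(x_n) = (|x_n|^{p-1}\operatorname{sgn}(x_n))$ applied to the test polynomials $\psi = 1$ and $\psi = z$, combine the two resulting identities, and apply H\"older's inequality to extract $|\alpha/\beta| > 1/2$ uniformly in $F$. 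This last step is the crux: $J_p$ is nonlinear, and extracting a sharp universal constant -- with strict inequality -- requires a careful analysis of the normal equations, almost certainly intertwined with the extremal $f_d$ and the associated dynamical system.
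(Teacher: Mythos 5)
Your outline reproduces the paper's structural skeleton (reduction to linear approximants as in Lemma \ref{easy1}, the exterior inclusion of Proposition \ref{76y78uijsdfg}, rotational symmetry, connectedness), but the three steps that carry the actual mathematical weight of the theorem are left as acknowledged sketches, and each conceals a genuine difficulty. First, for $\tau_p>1$ you must \emph{exhibit} functions whose optimal linear approximant has a root in $\D$; ``the non-Hilbertian geometry pushes the root inward'' is not an argument, and the Lagrange-multiplier machinery does not produce such examples for free --- the paper instead writes down explicit families (Propositions \ref{propo102} and \ref{propo103}: $f_k=\sum_{j\le k}(j+1)z^j$ for $1<p<2$ and a decreasing-coefficient family for $p>2$) and verifies a sign condition on the derivative of $t\mapsto\|(1-tz)f_k\|_p^p$ at $t=1$. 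Second, for $\tau_p<2$ your proposed route --- H\"older applied to the two normal equations --- does not obviously close: the duality map $J_p$ is nonlinear and a direct H\"older estimate on $\sum(a_k-ta_{k-1})^{\langle p-1\rangle}a_{k-1}=0$ does not isolate $|t|$. The paper's Proposition \ref{emptydisks} gets the uniform bound by combining the Birkhoff--James \emph{Pythagorean inequalities} (Lemma \ref{Pytha}), the isometry of the shift, and the operator-norm bound $\|Q_{1/t}\|\le(1-|1/t|)^{-1}$ for the difference quotient; this is a different and essentially necessary mechanism, and you would need to import it.

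Third, the openness of $\Omega_p$ (non-attainment of the critical radius) cannot ``fall out of compactness'': for each finite degree $d$ the extremal $T_{d,p}$ \emph{is} attained by a polynomial, so $1/T_{d,p}\in\Omega_p$, and the issue is precisely that the supremum $\tau_p=\lim_d T_{d,p}$ is not attained in $\ell^p_A$. The paper proves this by showing that any putative extremizer at $t=\tau_p$ would have coefficient ratios bounded below by the fixed point $\xi_2>1$ of the associated dynamical system, forcing exponential coefficient growth incompatible with membership in $\ell^p_A$; nothing in your proposal substitutes for this. A smaller but real issue: your connectedness argument maps into $\C\cup\{\infty\}$ (sending $F$ with constant $p_{1,F}$ to $\infty$), and a connected subset of the sphere can become disconnected after deleting $\infty$ (two parallel lines plus $\infty$ is connected; remove $\infty$ and it is not). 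The paper avoids this by working with $t_f=1/z_0$, which is always finite, and showing the set of attainable $t$ on $\R^+$ is an interval containing $0$.
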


Thus there is a striking difference between the $p = 2$ case, where $\Omega_p \cap \D = \varnothing$, and the $p \not = 2$ case, where $\Omega_p \cap \D \not = \varnothing$. In the latter case, extra zeros for optimal polynomial approximants exist in abundance. 

The proof of Theorem \ref{MAIN} involves the following steps to ``fill out'' $\Omega_p$: Lemma \ref{easy1} says that $z_0 \in \Omega_p$ if and only if there is some $f \in \ell^{p}_{A}$ with $f(0) \not = 0$ such that the optimal {\em linear} approximant $p_{1, f}$ vanishes at $z_0$, thus reducing the complexity of the problem. Proposition \ref{76y78uijsdfg} shows that 
$\C \backslash\overline{\D} \subset \Omega_{p}.$
Furthermore, Proposition \ref{1hHyyg667trRR} yields that $\Omega_p$ is rotationally symmetric, and Corollary \ref{condforextzer2}, that it is path connected.  At that point, we know that $\Omega_p$ is the complement of a disk. Two things remain to be shown: $\tau_{p} \in (1, 2)$ and $\Omega_p$ is open. 

Birkhoff-James orthogonality, and the associated Pythagorean inequalities (Lemma \ref{Pytha}), have been successfully used in \cite{ChengRoss15, Chengetal1, Chengetal2} to study $\ell^p_A$ and related spaces.  Here, these concepts are applied in Proposition \ref{emptydisks} to produce radii $\tfrac{1}{2} < r_p < 1$ such that 
$\Omega_{p} \cap r_p \D = \varnothing.$
In particular, this proves that 
$\Omega_{p} \cap \textstyle{\frac{1}{2}} \overline{\D} = \varnothing$ for all $1 < p < \infty$.
In Section \ref{Sect4} we provide  specific examples which  show that when $p \neq 2$ there are functions $f \in \ell^{p}_{A}$ (in fact polynomials) and $n \in \N$ for which the  optimal approximants $p_{n, f}$ have zeros in $\D$. This proves that  $\tau_p > 1$ and hence $\Omega_{p} \cap \D \not = \varnothing$. 

Trying to determine $\tau_p$ exactly leads to the most difficult part of this paper. We reduce the problem of determining $\tau_p$ in several ways, and thanks to the Lagrange multiplier method, we show it is equivalent to finding polynomials whose coefficients give solutions to a set of nonlinear recurrence relations. The behavior of these solutions as the degree of the polynomials grows will determine  the value of $\tau_p$. In the penultimate section of this paper, we use a dynamical systems approach to solve the recurrence relations mentioned above and  show that the limit of the solutions is not attained by a function in $\ell^p_A$. 
Thus, $1/\tau_p$ is not the zero of  an optimal approximant for any function. 
At that point we know that $\Omega_p$ is open and the proof of Theorem \ref{MAIN} is complete. Although we cannot compute $\tau_p$ in closed form, in Theorem \ref{lastth} we describe it implicitly  and provide numerical estimates for it.

As a byproduct of the proof of Theorem \ref{MAIN}, we provide a  connection between zeros of $p$-inner functions (i.e., those that arise as a metric co-projection $f - \widehat{f}$) and those of the optimal polynomial approximants (Theorem \ref{invsubs}).

\section{Notation}\label{Sect2}

Let us begin with  some basic notation. Let $\D = \{z \in \C: |z| < 1\}$ be the open unit disk in the complex plane $\C$, and $\operatorname{Hol}(\D)$  the analytic functions on $\D$. We set $\N = \{1, 2, 3 \ldots\}$ and $\N_{0} = \N \cup \{0\}$. Let $\mathscr{P} = \C[z]$ denote the vector space of all polynomials in the complex variable $z$ with complex coefficients, and for $d \in \N_0$, let $\mathscr{P}_d$ denote the vector space of polynomials of degree at most $d$.  A reference for the material below is the book \cite{CMR}.

\begin{Definition}
For $1 < p < \infty$, let  $\ell^{p}_{A}$ be the set of power series 
$f(z)=\sum_{k = 0}^{\infty} a_k z^k$
for which
\begin{equation}\label{eqn2}
\|f\|_p  :=  \left(\sum_{k = 0}^{\infty} |a_k|^p \right)^{\frac{1}{p}} < \infty.
\end{equation}
\end{Definition}
%Whenever there is no confusion possible we may just write $\|\cdot\|$ rather than $\|\cdot\|_p$. 
%Let us stress that $\|\cdot\|_p$ always refers to the norm in $\ell^p_A$, and not some other space with parameter $p$.
One can show that $\ell^{p}_{A} \subset \operatorname{Hol}(\D)$ and that $\ell^{p}_{A}$ is a uniformly convex Banach space. 

The uniform convexity of $\ell^{p}_{A}$ guarantees that for a closed subspace $\mathscr{V} \subset \ell^{p}_{A}$ and $f \in \ell^{p}_{A}$,  there is a unique $v_{f} \in \mathscr{V}$ such that 
\begin{equation}\label{0iuyghjkl}
\|f - v_f\|_{p} = \operatorname{dist}(f, \mathscr{V})  :=  \inf_{v \in \mathscr{V}} \|v - f\|_{p}.
\end{equation}
We say that $v_f$ is the {\em metric projection} of $f$ onto $\mathscr{V}$.
When $p = 2$, $\ell^{2}_{A}$ is a Hilbert space and the map $f \mapsto v_{f}$ is the orthogonal projection of $\ell^{2}_{A}$ onto $\mathscr{V}$ (and hence linear). When $p \not = 2$, the map $f \mapsto v_{f}$ is in general nonlinear.  However, it is continuous.

  Since the metric projection $f \mapsto v_{f}$ on $\ell^{p}_{A}$ is at the core of this paper, we omit the cases $p = 1$ and $p = \infty$ from our discussion. This is because the spaces $\ell^{1}_{A}$ and $\ell^{\infty}_{A}$ are not uniformly convex, and hence the closest point to $\mathscr{V}$ is generally not attained; furthermore, when attained, the nearest point need not be unique. In fact, this failure of uniqueness is relevant to the study of optimal polynomial approximants \cite{SeTe}. 

When dealing with approximation theory in Banach spaces without a natural concept of orthogonality, a useful substitute is Birkhoff-James orthogonality. This generalization of Hilbert space orthogonality is valid in arbitrary normed spaces and has proven to be useful in exploring invariant subspaces and cyclicity  in various Banach spaces of analytic functions \cite{Chengetal1, Chengetal2, SeTe}.

\begin{Definition}\label{BJ_defn}
For $f, g \in \ell^{p}_{A}$ we say that $f$ is \emph{Birkhoff-James orthogonal}  to $g$ if
$$\|f +\alpha g\|_p \geq \|f\|_p \quad \mbox{for all $\alpha \in \C$},$$
and in this case we write $f \perp_{p} g$. 
\end{Definition}

When $p = 2$, Birkhoff-James orthogonality coincides with orthogonality in the usual Hilbert space sense, since the definition above says that $0$ is the closest point in the subspace $\C g$ to $f$. The ordering in the relation $f \perp_{p} g$ is important since, when $p \not = 2$, it is possible that $f \perp_{p} g$ but $g \not \perp_{p} f$. There is the following tangible condition for Birkhoff-James orthogonality involving a semi-inner product on $\ell^{p}_{A}$.

 \begin{Lemma}\label{BJsemi}
If $f(z) = \sum_{k = 0}^{\infty} a_k z^k$ and  $g(z) = \sum_{k = 0}^{\infty}b_k  z^k$ belong to  $\ell^{p}_{A}$, then
 \begin{equation}\label{eqn3} 
f  \perp_{p} g \iff \sum_{k  = 0}^{\infty} |a_k|^{p-2}\overline{a_k}b_k = 0,
\end{equation}
where, in the above sum, we understand any occurrence of ``$|0|^{p - 2} 0$'' as zero.
\end{Lemma}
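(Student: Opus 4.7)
My plan is to recognize $f \perp_p g$ as the statement that $\alpha = 0$ minimizes the function $\phi: \C \to [0,\infty)$ given by $\phi(\alpha) = \|f + \alpha g\|_p^p$. Since $t \mapsto t^p$ is strictly increasing on $[0,\infty)$, the inequality $\|f + \alpha g\|_p \geq \|f\|_p$ for every $\alpha \in \C$ is equivalent to $\phi(\alpha) \geq \phi(0)$ for every $\alpha$. The function $\phi$ is convex on $\C \cong \R^2$, being a composition of the $p$-th power with a norm and an affine map, so once it is shown differentiable at the origin, the minimum condition at $0$ reduces to $\nabla \phi(0) = 0$. This convex optimality principle gives a single tool that handles both directions of the equivalence.

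Next I would compute the gradient termwise. Writing $\alpha = s + it$ and $F_k(s,t) := |a_k + (s+it)b_k|^p$, differentiation of $|a_k + \alpha b_k|^p = ((a_k + \alpha b_k)\overline{(a_k + \alpha b_k)})^{p/2}$ in $s$ and $t$ yields, at $(s,t) = (0,0)$,
\[ \partial_s F_k(0,0) = p|a_k|^{p-2} \operatorname{Re}(\overline{a_k} b_k), \qquad \partial_t F_k(0,0) = -p|a_k|^{p-2} \operatorname{Im}(\overline{a_k} b_k), \]
with the convention $|0|^{p-2} \cdot 0 = 0$. To interchange the derivative with the infinite sum I would use the pointwise bound $|\partial_s F_k(s,t)| \leq p|a_k + (s+it)b_k|^{p-1}|b_k|$ and apply H\"older's inequality with conjugate exponents $p$ and $p/(p-1)$, so that $\sum_k |\partial_s F_k(s,t)|$ converges uniformly on a bounded neighborhood of the origin; the same bound treats $\partial_t$. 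Hence $\phi$ is $C^1$ near $0$, and equating its partials at $(0,0)$ to zero and combining the real and imaginary components into a single complex identity produces exactly $\sum_k |a_k|^{p-2} \overline{a_k} b_k = 0$.

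The delicate step I expect to confront is justifying the termwise formulas at indices where $a_k = 0$, in particular when $1 < p < 2$ and $|a_k|^{p-2}$ is formally singular. The remedy is to check directly that $(s,t) \mapsto (s^2+t^2)^{p/2}|b_k|^p$ is $C^1$ on $\R^2$ for every $p > 1$ with vanishing gradient at the origin, which matches the convention $|0|^{p-2} \cdot 0 = 0$ and ensures each $F_k$ is genuinely $C^1$ at the origin. Once this is in place, the $C^1$ property of $\phi$ at $0$ follows from the uniform H\"older bound, and the convex optimality principle closes out both implications of the claimed equivalence.
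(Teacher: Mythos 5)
Your proof is correct: the identification of Birkhoff--James orthogonality with the vanishing gradient of the convex function $\alpha\mapsto\|f+\alpha g\|_p^p$, the termwise differentiation justified by the H\"older bound $\sum_k|w_k|^{p-1}|b_k|\leq\|f+\alpha g\|_p^{p-1}\|g\|_p$ applied to tails, and the separate check at indices with $a_k=0$ together give a complete argument for both implications. The paper itself states this lemma without proof, citing the book \cite{CMR}; your argument is the standard one (James's characterization via Gateaux differentiability of the $\ell^p$ norm) and is essentially the same route as the cited source.
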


This lemma implies that the relation $f \perp_{p} g$ is linear in the second slot; that is, if $f \perp_p g$ and $f \perp_p h$, then for any $\alpha$, $\beta \in \C$, $f \perp_{p} (\alpha g + \beta h)$. 
This is reminiscent of an orthogonality relation in $\ell^2$, and even more so after introducing the following notation: For $z= re^{i \theta} \in \C \backslash \{0\}$ and $s \geq 0$ define \begin{equation}\label{eqn4}
z^{\langle s\rangle}  :=  r^s e^{-i \theta}.
\end{equation}
If $z=0$ then $z^{\langle s \rangle}$ is defined to be $0$. A way to interpret this is that the mapping $z \mapsto  z^{\langle s \rangle}$ generalizes complex conjugation.
With this notation, \eqref{eqn3} tells us that
 \begin{equation}\label{eqn100}
 f \perp_p g \iff \sum_{k  = 0}^{\infty} a_k ^{ \langle p-1 \rangle} b_k =0.\end{equation}
 
In connection with this notation for powers, we will need the following derivative formulas in the sections ahead.  Their verification entails routine calculus.
 \begin{Lemma}\label{derivform}
 If $t$ is a real variable, and $s > 1$, then
 \[
    \frac{d}{dt} |t|^s = st^{\langle s-1 \rangle}\ \ \mbox{and}\,\ \
    \frac{d}{dt} t^{\langle s \rangle} = s|t|^{s-1}.
 \]
 
 \end{Lemma}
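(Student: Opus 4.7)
The plan is to reduce everything to real calculus via the observation that, for a real variable $t$, the definition $z^{\langle s \rangle} = r^s e^{-i\theta}$ simplifies dramatically. Writing $t > 0$ in polar form with $\theta = 0$ gives $t^{\langle s \rangle} = t^s$, while $t < 0$ corresponds to $\theta = \pi$ and yields $t^{\langle s \rangle} = -|t|^s$. In compact form, for real $t$, $t^{\langle s \rangle} = \operatorname{sgn}(t)\,|t|^s$ (and $0$ at $t=0$). So both identities are really statements about the real-valued functions $t\mapsto |t|^s$ and $t\mapsto \operatorname{sgn}(t)|t|^s$, and the proof just consists of differentiating these case by case.

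I would verify the first identity $\tfrac{d}{dt}|t|^s = s\, t^{\langle s-1\rangle}$ by splitting into $t>0$, $t<0$, and $t=0$. For $t>0$, $|t|^s = t^s$ and the derivative is $s t^{s-1} = s\,t^{\langle s-1\rangle}$. For $t<0$, write $|t|^s = (-t)^s$ and differentiate via the chain rule to get $-s(-t)^{s-1} = -s|t|^{s-1} = s\,t^{\langle s-1\rangle}$, using the sign convention above. At $t=0$, the hypothesis $s>1$ ensures differentiability and both sides vanish.

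The second identity $\tfrac{d}{dt}\,t^{\langle s\rangle} = s|t|^{s-1}$ is handled identically: for $t>0$ it reads $\tfrac{d}{dt}\, t^s = st^{s-1} = s|t|^{s-1}$; for $t<0$, differentiating $-(-t)^s$ gives $s(-t)^{s-1} = s|t|^{s-1}$; and at $t=0$, differentiability follows from $s>1$ and both sides vanish. There is no real obstacle here — the lemma is purely bookkeeping to confirm that the notation $z^{\langle s \rangle}$ behaves on the real line the way one would expect from the informal slogan that $z\mapsto z^{\langle s \rangle}$ generalizes conjugation, so that $|t|^s$ and $t^{\langle s \rangle}$ form a natural derivative pair. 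The only subtlety worth flagging is the behavior at $t=0$, which is precisely where the condition $s>1$ (as opposed to $s\ge 1$) is used to guarantee a two-sided derivative equal to $0$.
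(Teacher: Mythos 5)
Your argument is correct and is exactly the "routine calculus" the paper alludes to without writing out: reduce $t^{\langle s\rangle}$ to $\operatorname{sgn}(t)|t|^s$ on the real line and differentiate case by case, with $s>1$ handling the point $t=0$. Nothing further is needed.
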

 
 \section{Basic properties of $\Omega_p$}
 
Fix $f \in \ell^{p}_{A}$ and consider the finite dimensional vector space $f \mathscr{P}_n$, where we recall that $\mathscr{P}_n$ are the polynomials of degree at most $n$.
 Since $f \mathscr{P}_n $ is closed (being finite dimensional), \eqref{0iuyghjkl} says there is a unique $p_{n, f} \in \mathscr{P}_n$ such that 
$$\|1 - p_{n, f} f\|_p = \operatorname{dist}(1, f \mathscr{P}_n),$$ and that 
$p_{n, f} f$ is the metric projection of $1$ onto the subspace $f \mathscr{P}_n$.

\begin{Definition}\label{originalpolynh}
For $f \in \ell^{p}_{A}$ and $n \in \mathbb{N}_0$, the unique $p_{n, f} \in \mathscr{P}_n$ for which $$\|1 - p_{n, f} f\|_p = \operatorname{dist}(1, f \mathscr{P}_n)$$
is called the {\em optimal polynomial approximant (OPA) of $1/f$ of degree at most $n$}, abbreviated $p_{n, f}$. We call $p_{1, f}$ the {\em optimal linear approximant.}
\end{Definition}

\begin{Remark}\label{Rnozerosisi2} Here are some simple observations about $p_{n,f}$.
\begin{enumerate}
\item If $f$ is a constant function, then $p_{n, f}$ is a constant polynomial for all $n \in \N_{0}$. 
\item If $\operatorname{deg} p_{n, f}  = k < n$, then $p_{n, f} = p_{t, f}$ for all $k \leq t \leq n$. 
\end{enumerate}
\end{Remark}

The following helps us understand why the assumption $f(0) \neq 0$ is made in numerous results to follow, in order to eliminates trivial cases.

\begin{Proposition} \label{Rnozerosisi2}
For $f \in \ell^{p}_{A}$, the following are equivalent:
\begin{enumerate}
\item $f(0)=0$.
\item $p_{n,f} \equiv 0$  for all $n \in \N_{0}$.
\item There exists some $n \in \N_0$ such that $p_{n,f} \equiv 0$ .
\end{enumerate}
\end{Proposition}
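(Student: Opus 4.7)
My plan is to prove the cycle (1) $\Rightarrow$ (2) $\Rightarrow$ (3) $\Rightarrow$ (1). The implication (2) $\Rightarrow$ (3) is immediate, since $\N_0$ is nonempty.

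For (1) $\Rightarrow$ (2), I would observe that if $f(0)=0$, then every coefficient $a_0,a_1,\ldots$ of $f$ satisfies $a_0=0$, so any product $qf$ with $q\in\mathscr{P}_n$ has constant term $0$. Consequently, $1-qf$ has Taylor expansion $1+\sum_{k\geq 1}c_k z^k$, and the $\ell^p$ norm splits as
\[
\|1-qf\|_p^p = 1 + \|qf\|_p^p \geq 1 = \|1\|_p^p,
\]
with equality precisely when $qf\equiv 0$. If $f\equiv 0$ then trivially $p_{n,f}\equiv 0$; otherwise, $qf\equiv 0$ forces $q\equiv 0$, so the unique minimizer in $f\mathscr{P}_n$ is the zero polynomial, giving $p_{n,f}\equiv 0$.

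For (3) $\Rightarrow$ (1), suppose $p_{n,f}\equiv 0$ for some $n$. The definition of $p_{n,f}$ (\ref{originalpolynh}) together with the fact that constants belong to $\mathscr{P}_n$ gives
\[
\|1-\alpha f\|_p \geq \|1-p_{n,f}f\|_p = \|1\|_p \qquad \text{for all } \alpha\in\C.
\]
By Definition \ref{BJ_defn}, this is exactly $1\perp_p f$. Writing $1=\sum_k \delta_{k,0}z^k$ and $f(z)=\sum_k a_kz^k$ and applying the Birkhoff--James criterion of Lemma \ref{BJsemi} (or equivalently \eqref{eqn100}), only the $k=0$ term of the sum survives, yielding
\[
0 = \sum_{k=0}^{\infty} 1_k^{\langle p-1\rangle}\,a_k = a_0 = f(0).
\]

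The argument is really just unpacking definitions, so I do not expect any serious obstacle; the only step that requires care is (3) $\Rightarrow$ (1), where one must remember that testing optimality against the constant polynomials alone (not all of $\mathscr{P}_n$) is already sufficient to force $f(0)=0$ via the semi-inner product characterization of Birkhoff--James orthogonality.
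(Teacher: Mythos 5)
Your proof is correct. The implications $(1)\Rightarrow(2)\Rightarrow(3)$ match the paper's argument essentially verbatim (the paper writes the norm identity as an inequality, but it is the same coefficient-splitting observation). Where you genuinely diverge is in $(3)\Rightarrow(1)$: the paper argues by contrapositive, assuming $f(0)\neq 0$ and exhibiting a small constant $c$ for which $\|1-cf\|_p^p = |1-cf(0)|^p + |c|^p\|f-f(0)\|_p^p < 1$, which shows $p_{0,f}\not\equiv 0$ and hence $p_{n,f}\not\equiv 0$ for all $n$. You instead argue directly: optimality of the zero polynomial tested against constants gives $\|1-\alpha f\|_p \geq \|1\|_p$ for all $\alpha$, i.e.\ $1\perp_p f$, and the semi-inner-product criterion of Lemma \ref{BJsemi} collapses to $a_0 = 0$. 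Both are sound. The paper's route is self-contained and elementary, though it quietly relies on the reader seeing that the first-order decrease of $|1-cf(0)|^p$ in $|c|$ beats the $O(|c|^p)$ growth of the second term (this is where $p>1$ enters). Your route outsources that perturbation analysis to Lemma \ref{BJsemi}, which the paper has already placed at your disposal, and in exchange gives a cleaner, computation-free derivation of $f(0)=0$; it also previews the orthogonality technique used throughout the rest of the paper (e.g.\ in Lemma \ref{87IUYUYUYUYUbnbnbn}). A minor point in your favor: you explicitly dispose of the degenerate case $f\equiv 0$ in $(1)\Rightarrow(2)$, which the paper glosses over.
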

\begin{proof} Clearly $(b) \Rightarrow (c)$. To check $(a) \Rightarrow (b)$, take $f(z) = \sum_{k = 0}^{\infty} a_{k} z^k$, and suppose that $a_0 = f(0)=0$. For any $n \in \N_0$ and $\phi \in \mathscr{P}_n$,
let $(c_k)_{k = 0}^{\infty}$ be the sequence of Taylor coefficients of $\phi  f$. Then
$$\|1 - \phi  f\|_{p} \geq \Big(1 + \sum_{k = 1}^{\infty} |c_k|^p \Big)^{\frac{1}{p}} \geq 1 = \|1 - 0 f\|_{p}.$$
Thus, $p_{n, f} \equiv 0$ for all $n \in \N_0$. 
To see that $(c) \Rightarrow (a)$, suppose to the contrary that $f(0) \neq 0$. Then, for any $c \in \C$, 
$$\|1 - c f\|_{p}^{p}  = |1 - c f(0)|^p + |c|^p \|f - f(0)\|_{p}^{p}.$$
For  $c$ sufficiently close to zero, the above expression can be made less than $1$. Thus $p_{0, f} \not \equiv 0$.
Since 
$$\inf_{\phi \in \mathscr{P}_n} \|1 - \phi  f\|_{p} \leq \|1 - p_{0, f} f\|_{p} < 1,$$
it follows from (b) that $p_{n, f} \not \equiv 0$ for all $n$. 
\end{proof}

Next, we see that OPAs are well behaved with respect to removing a root.
\begin{Lemma}\label{easy1}
Let $f \in \ell^{p}_{A}$ and $n \in \N$. Suppose $z_0 $ is a root of $p_{n, f}$. Then 
$g(z)=f(z)\, (z-z_0)$ belongs to $\ell^p_A$ and
$$p_{n - 1,g}(z) = \frac{p_{n, f}(z)}{z-z_0}.$$
\end{Lemma}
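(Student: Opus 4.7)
The plan is to factor out the root and bounce the optimality back and forth between the two approximation problems. Here are the ingredients and the order in which I would use them.

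First, I would verify that $g \in \ell_A^p$. Writing $g(z) = zf(z) - z_0 f(z)$, one sees that $g$ is the image of $f$ under the bounded operator $S - z_0 I$ on $\ell_A^p$ (where $S$ is the shift), so $g \in \ell_A^p$ with no additional effort.

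Next, since $z_0$ is a root of $p_{n,f}\in\mathscr{P}_n$, the division algorithm gives a unique $q \in \mathscr{P}_{n-1}$ with $p_{n,f}(z) = (z-z_0)q(z)$. The key observation is the identity
\[
p_{n,f}(z)\,f(z) \;=\; q(z)\,(z-z_0)f(z) \;=\; q(z)\,g(z),
\]
which immediately yields $\|1 - p_{n,f}\,f\|_p = \|1 - q\,g\|_p$. Thus $q$ is a \emph{candidate} for $p_{n-1,g}$.

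The heart of the proof is showing this candidate is actually optimal over $g\mathscr{P}_{n-1}$. Given any $\phi \in \mathscr{P}_{n-1}$, the polynomial $\psi(z) := \phi(z)(z-z_0)$ lies in $\mathscr{P}_n$, and $\phi\,g = \psi\,f$. Therefore the optimality of $p_{n,f}$ over $f\mathscr{P}_n$ gives
\[
\|1 - \phi\,g\|_p \;=\; \|1 - \psi\,f\|_p \;\geq\; \|1 - p_{n,f}\,f\|_p \;=\; \|1 - q\,g\|_p.
\]
This shows that $q$ minimizes $\|1 - \phi\,g\|_p$ over $\phi \in \mathscr{P}_{n-1}$. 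By the uniqueness of the metric projection onto the finite-dimensional subspace $g\mathscr{P}_{n-1}$ (guaranteed by uniform convexity of $\ell_A^p$ for $1<p<\infty$), we conclude $p_{n-1,g} = q = p_{n,f}/(z-z_0)$.

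There is no real obstacle; the only point requiring any care is to make sure the candidate direction $\phi(z-z_0)$ always lies in $\mathscr{P}_n$, which is exactly why the degree drops by one when a root is factored out. The whole argument is just this bookkeeping combined with the uniqueness coming from uniform convexity.
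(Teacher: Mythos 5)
Your proof is correct and follows essentially the same route as the paper: both arguments rest on the inclusion $(z-z_0)\mathscr{P}_{n-1}\subset\mathscr{P}_n$ to bound $\operatorname{dist}(1,g\mathscr{P}_{n-1})$ from below by $\operatorname{dist}(1,f\mathscr{P}_n)$, then note that $q=p_{n,f}/(z-z_0)$ attains that bound and invoke uniqueness of the metric projection. Your explicit check that $g\in\ell^p_A$ and the pointwise (rather than infimum-level) phrasing of the inequality are only cosmetic differences.
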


\begin{proof}
From Definition \ref{originalpolynh}, 
\begin{align*}
\inf_{q \in \mathscr{P}_{n - 1}} \|1 - q g\|_{p}
& = \inf_{q \in \mathscr{P}_{n - 1}} \|1 -  q(z) (z - z_0) f(z)\|_{p}\\ & \geq \inf_{\phi \in \mathscr{P}_{n}} \|1 - \phi(z) f(z)\|_{p}\\
& = \|1 - p_{n, f}(z) f(z)\|_{p}.
\end{align*}
Now observe that 
$$q(z) = \frac{p_{n, f}(z)}{z - z_0}$$
minimizes the left-hand side. By Definition \ref{originalpolynh}, $q = p_{n - 1,g}$. 
\end{proof}
Repeated application of Lemma \ref{easy1} yields the statement below.
\begin{Corollary}
For $z_0 \in \C$, the following are equivalent. 
\begin{enumerate}
\item There is an $f \in \ell^{p}_{A}$ with $f(0) \not  = 0$ and an $n \in \N_0$ such that $p_{n, f}(z_0) = 0$. 
\item There is an $f \in \ell^{p}_{A}$ with $f(0) \not  = 0$ such that $p_{1, f}(z_0) = 0$. 
\end{enumerate}
\end{Corollary}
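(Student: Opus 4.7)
The plan is to treat (b) $\Rightarrow$ (a) as immediate (take $n = 1$), and to prove the substantive direction (a) $\Rightarrow$ (b) by peeling off, one at a time, the roots of $p_{n,f}$ other than $z_0$ and absorbing each into the function via Lemma \ref{easy1}.

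Concretely, given $f$ and $n$ with $p_{n,f}(z_0) = 0$, write
$p_{n,f}(z) = c(z - z_0)(z - w_1)\cdots(z - w_m)$
with $c \neq 0$ and $m + 1 = \deg p_{n,f}$. Setting $f_0 := f$ and $f_k(z) := f_{k-1}(z)(z - w_k)$ for $k = 1, \ldots, m$, the $k$-th application of Lemma \ref{easy1} (with $f_{k-1}$ in the role of $f$ and $w_k$ in the role of $z_0$) yields $f_k \in \ell^p_A$ and $p_{n-k, f_k}(z) = p_{n-k+1, f_{k-1}}(z)/(z - w_k)$. After the $m$-th step we are left with $p_{n-m, f_m}(z) = c(z - z_0)$, a polynomial of degree exactly $1$. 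Since this minimizer already lies in $\mathscr{P}_1 \subset \mathscr{P}_{n-m}$, uniqueness of OPAs forces $p_{1, f_m}$ to coincide with $c(z - z_0)$, so $p_{1, f_m}(z_0) = 0$.

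What remains is to verify that $f_m(0) = f(0)\prod_{k=1}^{m}(-w_k) \neq 0$, which reduces to showing that none of the $w_k$ equals $0$, or equivalently that $p_{n,f}(0) \neq 0$. This auxiliary claim is the crux: if $p_{n,f}(0) = 0$, then the constant term of $1 - p_{n,f} f$ equals $1$, giving $\|1 - p_{n,f} f\|_p \geq 1$; but the computation in the proof of Proposition \ref{Rnozerosisi2} shows $\|1 - p_{0,f} f\|_p < 1$ whenever $f(0) \neq 0$, and nestedness of the minimization spaces then forces $\|1 - p_{n,f} f\|_p \leq \|1 - p_{0,f} f\|_p < 1$, a contradiction. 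As a byproduct, this also shows $z_0 \neq 0$ is automatic, so the $z_0 = 0$ case of the corollary holds vacuously (both (a) and (b) being false).

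The main obstacle is precisely this last observation that $p_{n,f}(0) \neq 0$ whenever $f(0) \neq 0$; once it is in hand, everything else is straightforward bookkeeping with Lemma \ref{easy1}, and in particular the degenerate case $m = 0$ (when $p_{n,f}$ is already linear) is covered by the same uniqueness argument without any iteration.
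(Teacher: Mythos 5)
Your proof is correct and follows the same route as the paper, which simply states that the corollary follows by repeated application of Lemma \ref{easy1}. You additionally supply the detail the paper leaves implicit --- that $p_{n,f}(0)\neq 0$ whenever $f(0)\neq 0$, so the intermediate functions $f_k$ retain a nonzero value at the origin --- and this verification is sound.
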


The previous corollary is a significant reduction of the problem of describing $\Omega_p$ since one just needs to focus on finding the  zeros of optimal {\em linear} approximants. For a given $1<p<\infty$, we will see that not every complex number $z_0$ arises as the root of an OPA.  With this in mind we can redefine the region of the plane containing the eligible roots.
\begin{Corollary}
For $1 < p < \infty$, 
$$\Omega_{p}  :=  \{z_0 \in \C: \exists f \in \ell^{p}_{A}, f(0) \not = 0 \ni p_{1, f}(z_0) = 0\}.$$
\end{Corollary}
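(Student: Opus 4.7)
The statement to prove is essentially a restatement of the definition of $\Omega_p$ in light of the equivalence established just above, so my plan is short. The original definition of $\Omega_p$ says $z_0 \in \Omega_p$ if and only if there exist $f \in \ell^p_A$ with $f(0) \neq 0$ and $n \in \N$ such that $p_{n,f}(z_0) = 0$. The preceding corollary already gives the equivalence (a) $\iff$ (b) with (b) being precisely: there exists some $f \in \ell^p_A$ with $f(0) \neq 0$ and $p_{1,f}(z_0) = 0$. So I would invoke that corollary directly.

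The one mild subtlety is the index set: the original definition of $\Omega_p$ quantifies over $n \in \N$, whereas the preceding corollary's condition (a) quantifies over $n \in \N_0$. I would address this by noting that the $n = 0$ case adds nothing new. Indeed, by Proposition \ref{Rnozerosisi2}, the assumption $f(0) \neq 0$ forces $p_{0,f} \not\equiv 0$; since $p_{0,f}$ is a constant polynomial, it has no zeros at all. Thus the set of $z_0$ captured by condition (a) of the preceding corollary is unchanged if one restricts to $n \in \N$.

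Combining these two observations, the two descriptions of $\Omega_p$ coincide, which is exactly the claimed formula. Since this is purely a logical consolidation of what precedes it, there is no genuine obstacle; the only thing to be careful about is the bookkeeping around $n = 0$ versus $n \geq 1$, handled as above.
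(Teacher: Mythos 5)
Your proposal is correct and matches the paper's (implicit) argument: the paper offers no separate proof for this corollary, treating it as an immediate consequence of the preceding equivalence, which is exactly what you invoke. Your extra remark disposing of the $n=0$ case via Proposition \ref{Rnozerosisi2} (a nonzero constant polynomial has no zeros) is a harmless and correct piece of bookkeeping that the paper leaves tacit.
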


%From \eqref{0iuyghjkl}  (with $\mathscr{V} = \C(z f(z)$) there is a unique $t_f \in \C$ such that 
%$$\|(1 - t_f z) f(z)\|_{p} = \min_{t \in \C} \|(1 - t z) f(z)\|_{p}.$$
%When $f(0) \not =  0$, then $p_{1, f}$ is a non-constant polynomial whose zero is $1/t_f$. When $f(0) = 0$, then $p_{1, f} \equiv 0$.

Let us establish some basic geometric properties of $\Omega_p$. We start by connecting $p_{1,f}$ to a simpler minimization problem.  Here, we write $p_{1,f}(z) = c(1 - t_f z)$, so that $p_{1,f}$ has a root at $z= 1/t_f$.

\begin{Lemma}\label{87IUYUYUYUYUbnbnbn}
If $f \in \ell^{p}_A$ with $f(0) \neq 0$, then 
  $(1 - t_f z)f(z) \perp_{p}  z f(z)$, and
  $$\|(1 - t_f z) f(z)\|_{p} = \min_{t \in \C} \|(1 - t z) f(z)\|_{p}.$$
%\item $p_{1, f}(1/t_f) = 0$. 
\end{Lemma}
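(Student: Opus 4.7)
First, I would justify writing $p_{1,f}(z) = c(1 - t_f z)$ with $c \neq 0$, so that $F := (1 - t_f z)f$ is well defined. Since $f(0) \neq 0$, Proposition~\ref{Rnozerosisi2} gives $p_{1,f} \not\equiv 0$; moreover, $p_{1,f}$ cannot equal $bz$ for any $b \neq 0$, because then $\|1 - bzf\|_p^p = 1 + |b|^p\|zf\|_p^p \geq 1$, while the proof of Proposition~\ref{Rnozerosisi2} produces $\|1 - p_{0,f}f\|_p < 1$. So $p_{1,f}(0) = c \neq 0$, and $p_{1,f}f = cF$.

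The core step is to pass from the variational orthogonality satisfied by $1 - p_{1,f}f$ to a plain Birkhoff--James orthogonality between $F$ and $zf$. The standard characterization of metric projections onto a subspace (immediate from Definition~\ref{BJ_defn}) tells us that $\|1 - p_{1,f}f\|_p = \operatorname{dist}(1, f\mathscr{P}_1)$ is equivalent to $1 - p_{1,f}f \perp_p qf$ for every $q \in \mathscr{P}_1$. Taking $q(z) = z$ and applying Lemma~\ref{BJsemi},
$$\sum_{k \geq 0} (1 - cF)_k^{\langle p-1 \rangle}\,(zf)_k = 0.$$
The $k = 0$ term is zero because $(zf)_0 = 0$, and for $k \geq 1$ one has $(1 - cF)_k = -cF_k$. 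A direct check from \eqref{eqn4} shows that $(\,\cdot\,)^{\langle s \rangle}$ is multiplicative: $(\alpha\beta)^{\langle s \rangle} = \alpha^{\langle s \rangle}\beta^{\langle s \rangle}$. Hence $(-cF_k)^{\langle p-1\rangle} = (-c)^{\langle p-1\rangle}F_k^{\langle p-1\rangle}$, and pulling out the nonzero scalar $(-c)^{\langle p-1\rangle}$ (and restoring the vanishing $k = 0$ contribution) yields $\sum_{k \geq 0} F_k^{\langle p-1\rangle}(zf)_k = 0$, which by Lemma~\ref{BJsemi} is exactly $F \perp_p zf$.

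The minimization identity then follows at once: since $(1 - tz)f = F + (t_f - t)zf$, the orthogonality $F \perp_p zf$ yields $\|(1 - tz)f\|_p \geq \|F\|_p$ for every $t \in \C$, with equality at $t = t_f$. The one delicate point in this plan is the algebraic passage from $1 - cF \perp_p zf$ to $F \perp_p zf$, which is \emph{not} automatic because the semi-inner product of Lemma~\ref{BJsemi} is nonlinear in its first slot; what saves the argument is the fortunate coincidence that the constant $1$ is supported at exactly the single index where $zf$ vanishes, combined with the multiplicativity of $(\,\cdot\,)^{\langle p-1 \rangle}$.
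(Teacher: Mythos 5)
Your proposal is correct and follows essentially the same route as the paper: derive $1 - p_{1,f}f \perp_p zf$ from the minimality of $p_{1,f}$, then exploit the vanishing constant term of $zf$ together with Lemma~\ref{BJsemi} to discard the $1$ and the nonzero scalar, and deduce the minimization claim from the resulting orthogonality. The only additions are details the paper leaves implicit (the verification that $p_{1,f}(0)\neq 0$, which is the paper's Proposition~\ref{punct}, and the explicit multiplicativity of $(\,\cdot\,)^{\langle p-1\rangle}$), and both are correct.
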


\begin{proof}
Since 
$$\|1 - p_{1, f}(z) f(z)\|_{p} \leq \|1 - p_{1, f}(z) f(z) + \alpha f(z)\|_{p}$$
and
$$ \|1 - p_{1, f}(z) f(z)\|_{p} \leq \|1 - p_{1, f}(z) f(z) + \alpha z f(z)\|_{p}$$
hold for all $\alpha \in \C$, 
the definition of Birkhoff-James orthogonality (Definition \ref{BJ_defn}) implies that 
\begin{align}
1 - p_{1, f}(z) f(z) &\perp_{p} f(z) \label{firstperpcondz}\\
1 - p_{1, f} f(z) &\perp_{p} z f(z).\label{secperpcondz}
\end{align}
Next, \eqref{secperpcondz} can be written as 
$$
1 - c(1-t_fz) f(z) \perp_{p} z f(z).
$$
Since the constant term in $z f(z)$ is zero, one can use Lemma \ref{BJsemi} to see that 
$c (1 - t_f z)f(z) \perp_{p} z f(z)$. Another application of Lemma \ref{BJsemi} says we can also drop the $c$ on the left side to obtain $(1 - t_f z)f(z) \perp_{p}  z f(z)$.   The second claim follows immediately from the first.
%(b) $\Rightarrow$ (a) By the definition of $\perp_{p}$ we see that 
%$$\|(1 - t_{f} z) f(z) + \alpha z f\|_{p} \geq \|(1 - t_{f} z) f(z)\|_{p} \quad \mbox{for all $\alpha \in \C$}.$$ It follows that 
%$$\|(1 - (t_{f} + \alpha)z f(z)\|_{p} \geq \|(1 - t_{f} z) f(z)\|_{p}$$ for all $\alpha$, which implies (b).
\end{proof}

Here is a simple example of a non-inhabitant of $\Omega_p$. 

\begin{Proposition}\label{punct}
$0 \not \in \Omega_p$ for all $1 < p < \infty$.
\end{Proposition}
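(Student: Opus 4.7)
The plan is to argue by contradiction. Suppose, for some $f \in \ell^p_A$ with $f(0) \neq 0$, that the optimal linear approximant $p_{1,f}$ vanishes at the origin. Since $\deg p_{1,f} \leq 1$ and $p_{1,f}(0) = 0$, one must have $p_{1,f}(z) = bz$ for some $b \in \C$, and the argument then splits according to whether $b$ is zero.

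If $b = 0$, then $p_{1,f} \equiv 0$, which is ruled out by Proposition \ref{Rnozerosisi2} since $f(0) \neq 0$. If instead $b \neq 0$, then a one-line computation with the Taylor coefficients of $1 - bz f(z)$ gives
$$\|1 - bz f(z)\|_p^p = 1 + |b|^p \|f\|_p^p > 1,$$
whereas the zero polynomial is a legitimate competitor in $\mathscr{P}_1$ achieving $\|1 - 0 \cdot f\|_p = 1$. This contradicts the optimality of $p_{1,f}$ and closes the argument.

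Because both cases collapse in a line, there is no substantive obstacle here; the statement serves mainly as a sanity check that $0$ cannot appear as an extra zero. As a shorter alternative, one could invoke Lemma \ref{87IUYUYUYUYUbnbnbn} directly: it yields the factorization $p_{1,f}(z) = c(1 - t_f z)$, whence $p_{1,f}(0) = c$, so $p_{1,f}(0) = 0$ immediately forces $p_{1,f} \equiv 0$ and again contradicts Proposition \ref{Rnozerosisi2}.
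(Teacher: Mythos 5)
Your main argument is correct and is essentially the paper's own proof: both reduce to $p_{1,f}(z)=cz$, compute $\|1-czf\|_p^p=1+|c|^p\|f\|_p^p$, and conclude that $c=0$ would be strictly better, contradicting Proposition \ref{Rnozerosisi2}. One caution about your ``shorter alternative'': the parametrization $p_{1,f}(z)=c(1-t_fz)$ is not fully general (it cannot represent $bz$ with $b\neq 0$), so invoking it to rule out a zero at the origin is circular --- the explicit norm computation is what justifies that parametrization in the first place.
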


\begin{proof}
With the hypothesis that $f(0) \not = 0$, it must be that $p_{1, f}(0) \not = 0$. Otherwise, $p_{1, f}(z) = c z$ and so 
$$\|1 - p_{1, f} f\|^{p}_{p} = 1 + |c|^{p} \|f\|_{p}^{p}.$$
The right hand side of the above is minimized when $c = 0$, which would make $p_{1, f} \equiv 0$. But we already excluded this possibility.
\end{proof}

Here are some inhabitants of $\Omega_p$. 

\begin{Proposition}\label{76y78uijsdfg}
$\C \backslash\overline{\D} \subset \Omega_p$ for all $1 < p < \infty$.
\end{Proposition}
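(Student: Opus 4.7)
The plan is to exhibit, for each $z_0 \in \C \setminus \overline{\D}$, an explicit $f \in \ell^{p}_{A}$ with $f(0) \ne 0$ whose optimal linear approximant vanishes at $z_0$. The natural choice is the function whose reciprocal is exactly the linear polynomial we want to recover, namely
\[
   f(z) \;=\; \frac{1}{1 - z/z_0} \;=\; \sum_{n=0}^{\infty} \frac{z^{n}}{z_{0}^{n}}.
\]

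The verification proceeds in three short steps. First, since $|z_0| > 1$, the Taylor coefficients $(1/z_{0}^{n})$ form a geometric sequence with ratio of modulus strictly less than $1$, so $f \in \ell^{p}_{A}$ for every $1 < p < \infty$; moreover $f(0) = 1 \ne 0$. Second, the polynomial $q(z) = 1 - z/z_{0}$ lies in $\mathscr{P}_{1}$ and satisfies $q(z)f(z) \equiv 1$, so $\|1 - q f\|_{p} = 0 = \operatorname{dist}(1, f\mathscr{P}_{1})$. Third, by the uniqueness of the OPA (guaranteed by the uniform convexity of $\ell^{p}_{A}$, as recorded in Section \ref{Sect2}), we conclude that $p_{1,f}(z) = 1 - z/z_{0}$. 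This polynomial vanishes precisely at $z_{0}$, so $z_{0} \in \Omega_{p}$ by the definition.

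There is no real obstacle here: the whole point is that when the target $z_0$ lies outside $\overline{\D}$, the candidate ``inverse'' $1 - z/z_0$ actually multiplies a bona fide $\ell^{p}_{A}$ function to give $1$, collapsing the approximation problem to an exact one. The argument would fail on $\partial\D$ and inside $\D$ because $1/(1 - z/z_0)$ is no longer in $\ell^{p}_{A}$, which is consistent with the main theorem asserting that $\Omega_{p}$ is the complement of a disk of radius $1/\tau_{p} < 1$ and that the boundary circle is excluded.
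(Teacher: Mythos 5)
Your proof is correct and takes essentially the same approach as the paper: the paper uses $f(z) = (z-z_0)^{-1}$ with $p_{1,f}(z) = z - z_0$, which is just a scalar rescaling of your choice, and the key point in both is that $1 - p_{1,f}f \equiv 0$ forces the OPA by uniqueness.
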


\begin{proof}
For $	z_0 \in \C \backslash\overline{\D}$,
set  $f(z)= (z - z_0)^{-1}$ which, under the hypothesis that $|z_0| > 1$, belongs to $\ell^p_A$. Then $p_{1, f}(z)=z-z_0$, since $1-p_{1, f}f \equiv 0$.
\end{proof}

Therefore, the problem of describing $\Omega_p$ reduces to looking at $\Omega_{p} \cap \overline{\D}$. When $p = 2$, there is the following special case of a result from  \cite{BenJLMS}. We include a proof here for completeness.

\begin{Proposition}\label{prop311}
$\Omega_2 = \C \backslash\overline{\D}$.
\end{Proposition}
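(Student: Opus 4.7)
The inclusion $\C\setminus\overline{\D}\subset \Omega_2$ is already handled by Proposition \ref{76y78uijsdfg}, so the task is to prove the reverse inclusion: if $z_0$ is the zero of $p_{n,f}$ for some $f\in\ell^2_A$ with $f(0)\neq 0$, then $|z_0|>1$. The plan is to use the reduction to the linear case together with the Hilbert space structure to solve explicitly for the root location.

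First, by the corollary to Lemma \ref{easy1}, it suffices to treat the case where $z_0$ is the (unique) root of an optimal \emph{linear} approximant $p_{1,f}(z)=c(1-t_fz)$, so that $z_0=1/t_f$. By Lemma \ref{87IUYUYUYUYUbnbnbn}, $(1-t_fz)f(z)\perp_2 zf(z)$, which in the Hilbert space $\ell^2_A$ is the genuine orthogonality relation
\[
   \langle (1-t_fz)f(z),\,zf(z)\rangle = 0.
\]
Writing $f(z)=\sum_{k\geq 0}a_kz^k$ and expanding this inner product coefficient-by-coefficient yields
\[
   \sum_{k=1}^\infty a_k\overline{a_{k-1}} \;=\; t_f\sum_{k=1}^\infty |a_{k-1}|^2 \;=\; t_f\,\|f\|_2^{\,2},
\]
so that
\[
   t_f \;=\; \frac{\sum_{k=1}^\infty a_k\overline{a_{k-1}}}{\|f\|_2^{\,2}}.
\]

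The next step is to bound $|t_f|$. By the Cauchy--Schwarz inequality applied to the numerator,
\[
   \Bigl|\sum_{k=1}^\infty a_k\overline{a_{k-1}}\Bigr|^2
   \;\leq\; \Bigl(\sum_{k=1}^\infty |a_k|^2\Bigr)\Bigl(\sum_{k=1}^\infty |a_{k-1}|^2\Bigr)
   \;=\; \bigl(\|f\|_2^{\,2}-|a_0|^2\bigr)\,\|f\|_2^{\,2}.
\]
Here is where $f(0)\neq 0$ is used crucially: since $a_0\neq 0$, the factor $\|f\|_2^{\,2}-|a_0|^2$ is strictly less than $\|f\|_2^{\,2}$, and therefore $|t_f|<1$ strictly. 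Consequently $|z_0|=1/|t_f|>1$, giving $z_0\in\C\setminus\overline{\D}$.

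There is no real obstacle here; the only subtlety is ensuring the strict inequality in Cauchy--Schwarz, which is secured by the hypothesis $a_0\neq 0$. The argument fundamentally relies on the availability of a true inner product, explaining why this clean dichotomy between $\overline{\D}$ and its complement holds only when $p=2$ and fails in the general $\ell^p_A$ setting that occupies the rest of the paper.
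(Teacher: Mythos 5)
Your proof is correct and follows essentially the same route as the paper's: reduce to the linear approximant, use the orthogonality relation from Lemma \ref{87IUYUYUYUYUbnbnbn} to express $t_f$ as $\langle f, zf\rangle/\|f\|_2^2$, and bound it by Cauchy--Schwarz. The only (minor) difference is how strictness is obtained: the paper invokes the equality case of Cauchy--Schwarz (linear dependence of $f$ and $zf$ forces $f\equiv 0$), whereas you get $|t_f|<1$ directly from the sharper bound $\sum_{k\geq 1}|a_k|^2 = \|f\|_2^2 - |a_0|^2 < \|f\|_2^2$, which is a clean use of the hypothesis $f(0)\neq 0$.
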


\begin{proof}
When $p = 2$, Birkhoff-James orthogonality $\perp_{2}$ agrees with Hilbert space orthogonality. From Proposition \ref{76y78uijsdfg} we just need to show that $ \Omega_2 \cap \overline{\D} = \varnothing$.  From Lemma \ref{87IUYUYUYUYUbnbnbn} for $p=2$ we have
\begin{align*}
    f(z) &= (1 - t_f z)f(z)  +  t_f z f(z)\\
    \langle zf(z), f(z) \rangle &= \langle zf(z), (1 - t_f z)f(z) \rangle + t_f \langle zf(z), zf(z) \rangle \\
     \langle zf(z), f(z) \rangle &= 0 + t_f \|f\|_2^2. \\
\end{align*}
Consequently $t_f$ satisfies 
$$
|t_f| = \frac{|\langle z f(z), f(z) \rangle|}{\|f\|_{2}^{2}} \leq \frac{\|z f(z)\|_{2} \|f\|_{2}}{\|f\|_{2}^{2}} = \frac{\| f\|_{2} \|f\|_{2}}{\|f\|_{2}^{2}} = 1.$$
 Equality is  attained precisely when $f(z)$ and $z f(z)$ are linearly dependent, and this is exactly when $f \equiv 0$. 
\end{proof}

As we proceed through the paper, we will show that when $p \not = 2$, $\Omega_p$ is the complement of a disk of radius less than one and thus, unlike the $p = 2$ case, $\Omega_{p} \cap \D \not = \varnothing$.

Next we show that $\Omega_p$ has the following rotational symmetry property. 

\begin{Proposition}\label{1hHyyg667trRR}
 $e^{i \theta} \Omega_p = \Omega_p$ for all $0 \leq \theta < 2 \pi$.
\end{Proposition}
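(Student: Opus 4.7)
The plan is to show that the rotation $z \mapsto e^{i\theta} z$ maps $\Omega_p$ onto itself by exhibiting, for each $f$ realizing $z_0$ as a zero of $p_{n,f}$, a companion function $g$ that realizes $e^{i\theta} z_0$ as a zero of $p_{n,g}$. The natural candidate is $g(z) := f(e^{-i\theta} z)$.

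Concretely, I would proceed in the following steps. First, suppose $z_0 \in \Omega_p$ with witnesses $f \in \ell^p_A$, $f(0) \neq 0$, and $p_{n,f}(z_0) = 0$ for some $n \in \mathbb{N}_0$. Define $g(z) = f(e^{-i\theta} z)$. Since $|e^{-ik\theta} a_k| = |a_k|$, we have $g \in \ell^p_A$ with $\|g\|_p = \|f\|_p$ and $g(0) = f(0) \neq 0$. Second, I would use the rotational invariance of the $\ell^p_A$ norm in the following form: for any polynomial $\phi \in \mathscr{P}_n$, setting $h(w) = \phi(w) f(w)$, the coefficients of the composition $h(e^{-i\theta} z)$ differ from those of $h(w)$ only by unimodular factors, so $\|h(e^{-i\theta} \cdot)\|_p = \|h\|_p$. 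Applying this with $\phi \in \mathscr{P}_n$ arbitrary and noting the bijection $\phi(w) \leftrightarrow \phi(e^{-i\theta}\cdot) \in \mathscr{P}_n$, I obtain
\begin{equation*}
\operatorname{dist}(1, g \mathscr{P}_n) = \operatorname{dist}(1, f \mathscr{P}_n),
\end{equation*}
and the polynomial $\phi(z) = p_{n,f}(e^{-i\theta} z)$, which lies in $\mathscr{P}_n$, attains this distance when paired with $g$.

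Third, by uniqueness of the OPA (Definition \ref{originalpolynh}), which follows from the uniform convexity of $\ell^p_A$, this forces
\begin{equation*}
p_{n,g}(z) = p_{n,f}(e^{-i\theta} z).
\end{equation*}
Evaluating at $z = e^{i\theta} z_0$ gives $p_{n,g}(e^{i\theta} z_0) = p_{n,f}(z_0) = 0$, so $e^{i\theta} z_0 \in \Omega_p$. This yields $e^{i\theta} \Omega_p \subset \Omega_p$, and applying the same argument with $-\theta$ in place of $\theta$ gives the reverse inclusion, completing the proof.

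There is no real obstacle here; the argument rests entirely on the coefficient-level rotational invariance of the $\ell^p$-norm and on the uniqueness of the metric projection onto the finite-dimensional subspace $f \mathscr{P}_n$. The only point that needs care is articulating clearly that the map $\phi \mapsto \phi(e^{-i\theta}\cdot)$ is a linear bijection of $\mathscr{P}_n$ onto itself, so that minimizing over $\phi \in \mathscr{P}_n$ in the expressions defining the two OPAs yields the same infimum and the corresponding minimizers are identified as above.
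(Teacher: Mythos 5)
Your argument is correct and is essentially the paper's own proof: both rest on the observation that the Taylor coefficients of $1 - \phi(z)f(z)$ and $1 - \phi(e^{-i\theta}z)f(e^{-i\theta}z)$ agree in modulus, so the norm is preserved under the substitution $z \mapsto e^{-i\theta}z$, and uniqueness of the OPA then identifies $p_{n,g}$ with $p_{n,f}(e^{-i\theta}\cdot)$. Your write-up is just slightly more explicit about the bijection $\phi \mapsto \phi(e^{-i\theta}\cdot)$ on $\mathscr{P}_n$ and about deducing both inclusions.
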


\begin{proof}
 If $f \in \ell^{p}_A$, $q \in \mathscr{P}_{n}$, and $\theta \in [0, 2 \pi)$, the nonzero Taylor coefficients $(c_k)_{k = 1}^{\infty}$ of $1-q(z)f(z)$ and those of $1-q(e^{i\theta} z)f(e^{i\theta}z)$ have the same modulus. Therefore, 
 \begin{align*}
 \|1  - q(z) f(z)\|_{p}^{p} & = |1 - q(0) f(0)|^p + \sum_{k = 1}^{\infty} |c_k|^p\\
 & = \|1 - q(e^{i \theta} z) f(e^{i \theta} z)\|_{p}^{p}.
 \end{align*}
Thus,   $p_{n, f}(z)$ is optimal for $f(z)$ if and only if $p_{n, f}(e^{i\theta} z)$ is optimal for $f(e^{i\theta}z)$. Finally, $z_0$ is a zero of $p_{n, f}(z)$ if and only if $e^{-i\theta} z_0$ is a zero of $p_{n, f}(e^{i \theta} z)$. 
\end{proof}

Our last goal in this section is to show that $\Omega_p$ is path connected. Together with Propositions \ref{punct} and \ref{76y78uijsdfg}, and the rotational symmetry of $\Omega_p$, this will show that $\Omega_p$ is either the punctured plane or the complement of some disk (either open or closed) centered at the origin. The path connectedness of $\Omega_p$ will be a consequence of the following lemma.

\begin{Lemma}  \label{continuityofrootlemma}
Let $[a,b] \subset \mathbb{R}$.  
Suppose that $F: \mathbb{R}\times\mathbb{R}\longmapsto \mathbb{R}$ is continuous and satisfies the property that
for each $x \in \mathbb{R}$, there exists a unique $t_x \in [a,b]$ such that $F(x, t_x) = 0$.  Then the mapping $\Upsilon:  x \mapsto t_x$ is continuous.
\end{Lemma}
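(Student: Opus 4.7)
The plan is to prove continuity via the standard subsequence-compactness argument, exploiting the fact that the ``output'' variable is constrained to lie in the compact interval $[a,b]$.

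First, I would fix an arbitrary $x_0 \in \mathbb{R}$ and a sequence $x_n \to x_0$, and aim to show $t_{x_n} \to t_{x_0}$. By the Urysohn-style subsequence principle, it suffices to prove that every subsequence of $(t_{x_n})$ admits a further subsequence converging to $t_{x_0}$. So I would pick an arbitrary subsequence $(t_{x_{n_k}})$; since this lies inside the compact set $[a,b]$, the Bolzano--Weierstrass theorem yields a sub-subsequence $(t_{x_{n_{k_j}}})$ converging to some limit $t^{\ast} \in [a,b]$.

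Next, I would apply the hypothesis that $F$ is jointly continuous on $\mathbb{R}\times\mathbb{R}$. Since $x_{n_{k_j}} \to x_0$ and $t_{x_{n_{k_j}}} \to t^{\ast}$, joint continuity gives
\[
F(x_0, t^{\ast}) = \lim_{j \to \infty} F(x_{n_{k_j}}, t_{x_{n_{k_j}}}) = \lim_{j \to \infty} 0 = 0.
\]
By the uniqueness hypothesis, the only element of $[a,b]$ at which $F(x_0, \cdot)$ vanishes is $t_{x_0}$; hence $t^{\ast} = t_{x_0}$. Since the chosen subsequence was arbitrary, the whole sequence $(t_{x_n})$ converges to $t_{x_0}$, establishing continuity of $\Upsilon$ at $x_0$.

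There is no real obstacle here; the only thing to be careful about is not to argue directly that $t_{x_n} \to t_{x_0}$ without first passing to a subsequence, because a priori $(t_{x_n})$ could have several cluster points. Compactness of $[a,b]$ together with the uniqueness of the zero is what forces all cluster points to coincide, and this is the crux of the argument. Note that compactness of the target interval is essential: without it, $t_{x_n}$ could escape to infinity and the continuity of $F$ would provide no information.
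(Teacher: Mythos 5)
Your proof is correct and follows essentially the same route as the paper: both arguments extract a convergent subsequence of $(t_{x_n})$ via compactness of $[a,b]$, identify the limit as $t_{x_0}$ using continuity of $F$ (the paper phrases this as closedness of $F^{-1}(\{0\})$) together with uniqueness, and conclude via the subsequence principle. No issues.
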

\begin{proof}
Observe that $\Upsilon$ is well defined.  Suppose that $x_n \to x$ in $\mathbb{R}$.  The continuity of $F$ shows that the set
\[
      S := \{(x,t):\ F(x,t) = 0\} = F^{-1}(\{0\})
\]
is closed.   

Consider the collection of pairs $\{ (x_n, \Upsilon(x_n)) \}$.  The collection of second entries lies in the bounded interval $[a,b]$, and hence there is an accumulation point $y \in [a,b]$, and a subsequence $(x_{n_k})_{k \geq 1}$ for which $\Upsilon(x_{n_k}) \to y$.   The closedness of $S$ then implies that $\Upsilon(x) = y$.   In a similar way we see that any subsequence of $(x_n)_{n \geq 1}$ has a further subsequence $(x_{\nu_k})_{k \geq 1}$ for which 
$
       \Upsilon(x_{\nu_k}) \to \Upsilon(x).
$
This implies that 
$
        \Upsilon(x_n) \to \Upsilon(x).
$
\end{proof}

\begin{Corollary}\label{condforextzer2}
$\Omega_{p}$ is path connected for any $1 < p < \infty$.
\end{Corollary}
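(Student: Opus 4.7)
The plan is to reduce path-connectedness of $\Omega_p$ to the facts, already established in Proposition \ref{76y78uijsdfg}, that $\C\setminus\overline{\D}\subset\Omega_p$ and that $\C\setminus\overline{\D}$ is itself path-connected. It then suffices to connect each $z_0 \in \Omega_p$ by a continuous path in $\Omega_p$ to some point outside $\overline{\D}$; if $|z_0|>1$ there is nothing to do, so assume $|z_0|\le 1$.

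Take a witness $f \in \ell^p_A$ with $f(0)\ne 0$ and $p_{1,f}(z_0)=0$, so by Lemma \ref{87IUYUYUYUYUbnbnbn} $p_{1,f}(z) = c_f(1 - t_f z)$ with $t_f = 1/z_0$. I would deform $f$ through the family
\[
  f_s(z) := f(z) + s\, f(0), \qquad s \in [0,\infty),
\]
which lies in $\ell^p_A$ with $f_s(0) = (1+s)f(0) \ne 0$, so each $t_{f_s}$ is defined. At $s=0$ the root of $p_{1,f_0}$ is $z_0$; as $s \to \infty$, the Birkhoff--James equation written below forces $t_{f_s}\to 0$, so the root $1/t_{f_s}$ tends to infinity.

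The main technical step is continuity of $s \mapsto t_{f_s}$, which I intend to obtain from Lemma \ref{continuityofrootlemma}. By Lemmas \ref{BJsemi} and \ref{87IUYUYUYUYUbnbnbn}, $t_{f_s}$ is the unique complex solution of
\[
  \sum_{k=1}^{\infty}\bigl(a_k^{(s)} - t\, a_{k-1}^{(s)}\bigr)^{\langle p-1\rangle}\, a_{k-1}^{(s)} = 0,
\]
where $(a_k^{(s)})_{k\ge 0}$ are the Taylor coefficients of $f_s$. Writing $t=u+iv$ and separating real and imaginary parts produces two continuous real equations in $(s,u,v)$; the a priori bound $|t_{f_s}|\le 2$, which follows from $\|(1-t_{f_s}z)f_s\|_p\le\|f_s\|_p$ together with $\|zf_s\|_p=\|f_s\|_p$ and the triangle inequality, confines $(u,v)$ to a compact square. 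A two-variable analogue of Lemma \ref{continuityofrootlemma}, proved by the same compactness-plus-uniqueness argument, then yields the required continuity.

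To finish, let $[0,\sigma)$ denote the connected component of $s=0$ in the open set $\{s\ge 0 : t_{f_s}\ne 0\}$, on which $1/t_{f_s}$ is continuous. Either $\sigma=\infty$, and then $|1/t_{f_s}|\to\infty$ supplies some $s^*$ with $|1/t_{f_{s^*}}|>1$; or $\sigma<\infty$, in which case $t_{f_s}\to 0$ as $s\nearrow\sigma$, again giving such an $s^*<\sigma$. In either case, the restriction of $s\mapsto 1/t_{f_s}$ to $[0,s^*]$ is a continuous path in $\Omega_p$ joining $z_0$ to a point outside $\overline{\D}$. The chief obstacle is precisely ensuring $t_{f_s}$ does not vanish before the path leaves $\overline{\D}$, which is exactly what working inside the component-of-origin sidesteps.
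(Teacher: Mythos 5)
Your argument is correct, and it reaches the conclusion by a genuinely different route than the paper. The paper first invokes rotational symmetry (Proposition \ref{1hHyyg667trRR}) to reduce to showing that $\Omega_p\cap\R^+$ is an interval, then perturbs the coefficients of a witness polynomial one at a time, applying Lemma \ref{continuityofrootlemma} to the real equation $h'(t)=0$ and concluding with the intermediate value theorem. You instead connect an arbitrary $z_0\in\Omega_p$ directly to the exterior $\C\setminus\overline{\D}$ (already known to lie in $\Omega_p$ and path connected) via the explicit one-parameter family $f_s=f+sf(0)$, tracking the complex root $1/t_{f_s}$. Both proofs rest on the same engine --- compactness plus uniqueness of the critical point gives continuity of $f\mapsto t_f$ along the deformation --- and your identification of $t_{f_s}$ as the unique solution of the Birkhoff--James equation is legitimate, since by Lemma \ref{BJsemi} any solution $t$ makes $(1-tz)f_s\perp_p zf_s$ and hence is the (unique) global minimizer. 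What your route buys: it avoids rotational symmetry and the intermediate value theorem entirely, and it explicitly confronts the degenerate possibility $t_{f_s}=0$ (root at infinity), which the paper's proof does not address. What it costs: you need a two-variable (complex-root) version of Lemma \ref{continuityofrootlemma}, which is routine, and since your witness $f$ is a general element of $\ell^p_A$ rather than a polynomial, the joint continuity of $F(s,t)=\sum_{k\ge1}(a_k^{(s)}-ta_{k-1}^{(s)})^{\langle p-1\rangle}a_{k-1}^{(s)}$ requires a short uniform-convergence check via H\"older (as in the computation inside Proposition \ref{propo202}); you should record both of these. Your claim that $t_{f_s}\to0$ as $s\to\infty$ is also sound: only the $k=1$ term of the equation depends on $s$, and its modulus grows like $(1+s)^p|t|^{p-1}$ while the remaining terms stay bounded on $|t|\le2$, forcing $t_{f_s}\to0$.
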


\begin{proof}
From the rotational symmetry of $\Omega_p$, it suffices to show that $\Omega_p \cap \R^+$ is an interval. To apply the previous lemma,  fix a polynomial $f(z) = \sum_{k = 0}^{d} a_k z^k$ such that its linear approximant has a real positive zero, and define 
$$F(x, t) = \frac{d}{dt}\Big\|(1 - t z) \big(x + \sum_{k = 1}^{d} a_k z^k\big)\Big\|_{p}^{p}=: h'(t), \ \,(x,t) \in \mathbb{R}\times\mathbb{R}.$$
Then $F$ satisfies the hypothesis of the lemma and for each $x$ the corresponding $t_x$ is the zero of $h'$. The intermediate value theorem says that these values of $t_x$ constitute an interval.  Repeat this argument, changing the definition of $F$ by replacing $a_j$ by $x$.  We see that the solution $t$ of $h'(t) = 0$ depends continuously on the each of coefficients $a_j$ of $f$. 
%The rotational symmetry of $\Omega_p$ completes the proof of path connectedness. 
\end{proof}

\section{$\Omega_p$ and $p$-inner functions}\label{Sect3}

A point $z_0$ belongs to $\Omega_p$ precisely when there is an $f \in \ell^{p}_{A}$ with $f(0) \not = 0$ such that $p_{1, f}(z_0) = 0$.  As we have seen earlier, and will reinforce later in this section, the problem of finding the zero of $p_{1,f}$ reduces to the extremal problem of
finding a $t_{f} \in \mathbb{C}$ such that
$$\|(1 - t_{f}z)f(z)\|_{p} = \min_{t \in \C} \|(1 - t z) f(z)\|_{p}.$$ Furthermore, the definition of $\Omega_{p}$ its rotational symmetry imply that $e^{i \theta}/t_f \in \Omega_p$ for every $\theta \in [0, 2 \pi)$. We now connect this with another  extremal problem involving the concept of a $p$-inner function. 
 Most of the material in this section comes, {\em mutatis mutandis}, from the book \cite[Ch.~8]{CMR}.
 
 \begin{Definition}\label{p_INNER}
 A function $f \in \ell^{p}_{A} \backslash\{0\}$ is {\em $p$-inner} if  % [[ check this everywhere!! ]]
 $$f \perp_{p} S^{n} f  \quad \mbox{for all $n \geq 1$}.$$
\end{Definition}

Examples of $p$-inner functions include the monomials $\{z^n: n \in \N_0\}$ as well as 
$$f_{w}(z) = \frac{1 - z/w}{1 - w^{\langle p' - 1\rangle} z}, \quad \mbox{where $w \in \D \backslash\{0\}$}.$$
In the above,  $p'$ denotes the H\"{o}lder conjugate index to $p$ \cite[p.~111]{CMR} and $w^{\langle p' - 1\rangle} $ comes from \eqref{eqn4}.

When $p = 2$, $\ell^{2}_{A}$ is a Hilbert space with inner product 
$$\langle f, g\rangle = \sum_{k = 0}^{\infty} a_{k} \overline{b_k},$$
where $f(z) = \sum_{k=0}^{\infty}a_kz^k$ and $g(z) = \sum_{k=0}^{\infty}b_k z^k$.
By Parseval's theorem, this can be written in integral form as 
$$\langle f, g\rangle = \int_{0}^{2 \pi} f(e^{i \theta}) \overline{g(e^{i \theta})}\,\frac{d \theta}{2 \pi}.$$
The criterion for $2$-inner from Definition \ref{p_INNER} becomes 
$$\int_{0}^{2 \pi} |f(e^{i \theta})|^2 e^{i n \theta} \frac{d \theta}{2 \pi} = 0 \quad \mbox{for all $n \geq 1$}.$$
The equation above, along with its complex conjugate (and with a consideration of the Fourier coefficients of $|f(e^{i \theta})|^2$), shows that a function $f$ is $2$-inner precisely when $|f(e^{i \theta})|$ is a nonzero constant almost everywhere; that is to say, apart from a nonzero multiplicative constant, $f$ is inner in the traditional sense from Beurling's  paper \cite{MR27954}.

For $f \in \ell^p_A\backslash\{0\}$ let
$$[f]  :=  \overline{\operatorname{span}}\{S^n f: n \in \N_0\}$$ denote the shift-invariant subspace of $\ell^p_A$ generated by $f$.  Let 
 $$J = f - \widehat{f},$$ where 
 $\widehat{f}$ is the unique function in $\ell^{p}_{A}$ for which 
$$\|f - \widehat{f}\|_{p} = \inf_{g \in [S f]} \|f - g\|_{p}.$$  It turns out that $J$
is $p$-inner
and  \cite[p.~106]{CMR} shows that every $p$-inner function arises in this manner. 

Classical theory shows that inner functions (in the sense of Beurling) satisfy certain extremal problems. It turns out that $p$-inner functions do something analogous \cite[p.~103]{CMR}. 

\begin{Proposition}\label{iiiMMMi}
For $f \in \ell^{p}_{A}$ with $f(0)  = 1$, let 
\begin{equation}\label{extremeeq1}
M  :=  \sup\left\{ |\phi(0)| : \|\phi\|_p =1, \phi \in [f] \right\}
\end{equation}
and 
\begin{equation}\label{extremeeq2}
I  :=  \inf \left\{ \|g\|_p : g(0)=1, \ g \in [f] \right\}.
\end{equation}
Then,
\begin{enumerate}
\item $I M = 1$;
\item Each of the extremal problems above  have unique solutions $\phi$ and $g$ respectively with 
$$\phi(z) = \frac{g(z)}{\|g\|_{p}} \quad \mbox{and} \quad g(z) = \frac{\phi(z)}{\phi(0)}.$$
\end{enumerate}
\end{Proposition}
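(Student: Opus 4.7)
The plan is to treat the two extremal problems in parallel, using the uniform convexity and reflexivity of $\ell^p_A$ to produce unique extremizers, and then to exhibit the duality $g \leftrightarrow \phi$ by comparing trial functions. The admissible set $A := \{g \in [f]: g(0) = 1\}$ is non-empty (since $f \in A$), convex, and closed in $\ell^p_A$, because evaluation at $0$ is the bounded linear functional $\sum a_k z^k \mapsto a_0$. Uniform convexity of $\ell^p_A$ then produces the unique point of $A$ of minimal norm; this is the unique extremizer $g$ for the problem defining $I$.

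For the problem defining $M$, the bound $|\phi(0)| \leq \|\phi\|_p$ shows $M \leq 1$, while $f/\|f\|_p$ yields $M \geq 1/\|f\|_p > 0$. To obtain a maximizer I take a sequence $(\phi_n)$ in the closed unit ball of $[f]$ with $|\phi_n(0)| \to M$; reflexivity of $\ell^p_A$ for $1 < p < \infty$ produces a weakly convergent subsequence $\phi_{n_k} \rightharpoonup \phi$, and the weak closedness of $[f]$ (it is norm-closed and convex) places $\phi$ in $[f]$. Weak lower semicontinuity of the norm gives $\|\phi\|_p \leq 1$, and weak continuity of the coordinate functional $\phi \mapsto \phi(0)$ yields $|\phi(0)| = M$; if $\|\phi\|_p < 1$ were strict, then $\phi/\|\phi\|_p$ would exceed $M$ at the origin. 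After multiplying by a unimodular constant I may normalize $\phi(0) = M > 0$. For uniqueness (under this phase normalization), suppose $\tilde\phi$ is another maximizer with $\tilde\phi(0) = M$ and $\|\tilde\phi\|_p = 1$; then $\psi := (\phi + \tilde\phi)/2$ satisfies $\psi(0) = M$, and the universal bound $|\psi(0)| \leq M\|\psi\|_p$ (valid for all $\psi \in [f]$ by definition of $M$) combined with the triangle inequality forces $\|\psi\|_p = 1$. Strict convexity of $\ell^p_A$ then gives $\phi = \tilde\phi$.

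To obtain $IM = 1$ together with the explicit relations, I compare trial functions across the two problems. The normalized function $g/\|g\|_p$ lies in $[f]$ with unit norm and value $1/I$ at the origin, so $M \geq 1/I$. Conversely $\phi/\phi(0)$ lies in $[f]$ with value $1$ at the origin and norm $1/M$, so $I \leq 1/M$. Combining these yields $IM = 1$. Since the first inequality is then an equality, $g/\|g\|_p$ attains the supremum in the $M$ problem, and by the uniqueness established above $\phi = g/\|g\|_p$; likewise $\phi/\phi(0)$ attains the infimum in the $I$ problem, so $g = \phi/\phi(0)$. I expect the main technical step to be the existence portion of the $M$ problem, since it requires invoking reflexivity and weak continuity of point evaluation, while the uniqueness hinges on strict (not merely uniform) convexity of $\ell^p_A$ together with a careful rotation to fix the phase of $\phi(0)$.
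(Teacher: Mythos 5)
Your proof is correct; the paper itself supplies no argument for this proposition, deferring to \cite[p.~103]{CMR}, and your reasoning is precisely the standard one used there: uniform convexity for the nearest-point problem defining $I$, reflexivity and weak compactness for the supremum problem defining $M$, and the comparison of the trial functions $g/\|g\|_p$ and $\phi/\phi(0)$ to force $IM=1$ and identify the extremizers. Your explicit handling of the unimodular phase of $\phi$ is a welcome precision, since strictly speaking the maximizer of \eqref{extremeeq1} is unique only up to such a constant, and the normalization $\phi(0)=M>0$ is exactly what makes the stated identity $\phi=g/\|g\|_p$ hold.
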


Let $G$ be the (unique) solution to the infimum problem (\ref{extremeeq2}). Then by definition,
\[
\|G\|_p \leq \| G(z) + z\Psi(z) \|_p \quad \mbox{for all 
$\Psi \in [f]$.}
\]
 Conversely, this condition characterizes $G$. 
In particular, we have $G \perp_p S^n G$ for every $n\in\mathbb{N}$, and thus $G$ is $p$-inner.
In fact $G = f - \widehat{f}$.

The above propositions also hold, with essentially the same proof, if instead we consider the extremal problems
\begin{equation}\label{extremeeq11}
M_{N}  :=  \sup\left\{ |\phi(0)| : \|\phi\|_p =1, \phi \in f \mathscr{P}_N \right\}
\end{equation}
and
\begin{equation}\label{extremeeq21}
I_{N}  :=  \inf \left\{ \|g\|_p : g(0)=1, g \in f \mathscr{P}_N \right\}
\end{equation}
for fixed $N \in \N$ and $f \in \ell^{p}_{A}$.

\begin{Proposition}
Suppose that $f \in \ell^{p}_{A}$, $f(0) = 1$ and $N \in \mathbb{N}$.   Then
\begin{enumerate}
\item $I_{N} M_{N} = 1$.
\item Each of the extremal problems \eqref{extremeeq11} and \eqref{extremeeq21} have unique solutions $\phi_N$ and $J_N$ respectively which differ by a nonzero multiplicative constant. 
\end{enumerate}
\end{Proposition}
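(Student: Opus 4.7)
The plan is to mimic the proof of Proposition \ref{iiiMMMi} word-for-word, with the shift-invariant subspace $[f]$ replaced by the finite-dimensional subspace $f\mathscr{P}_N$. The key structural fact that makes the original proof work is that $[f]$ is closed; since $f\mathscr{P}_N$ is finite-dimensional, it is automatically closed, and every step carries over.

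First I would establish the duality $I_N M_N = 1$ by a scaling argument. The map $g\mapsto g(0)$ is a bounded linear functional on the Banach space $f\mathscr{P}_N$, and the two extremal problems are related by normalizing the value at $0$ versus normalizing the norm. Concretely, if $g \in f\mathscr{P}_N$ with $g(0)=1$ and $\|g\|_p = I_N$, then $\phi := g/\|g\|_p$ lies in $f\mathscr{P}_N$ with $\|\phi\|_p = 1$ and $|\phi(0)| = 1/I_N$, giving $M_N \geq 1/I_N$. Conversely, any $\phi \in f\mathscr{P}_N$ with $\|\phi\|_p = 1$ and $\phi(0) \neq 0$ gives $g := \phi/\phi(0) \in f\mathscr{P}_N$ with $g(0) = 1$ and $\|g\|_p = 1/|\phi(0)|$, so $I_N \leq 1/M_N$. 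Note that $M_N > 0$ since $f \in f\mathscr{P}_N$ has $f(0) = 1 \neq 0$, so the reciprocals are legitimate.

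For existence and uniqueness of $J_N$, I would rewrite the infimum \eqref{extremeeq21} as a metric projection. Since $f(0) = 1$, the affine set $\{g \in f\mathscr{P}_N : g(0)=1\}$ equals $f + f\cdot z\mathscr{P}_{N-1}$, where $f\cdot z\mathscr{P}_{N-1}$ is a closed (finite-dimensional) linear subspace of $\ell^p_A$. Thus $I_N$ is the distance from $-f$ to this subspace, and the uniform convexity of $\ell^p_A$ (as used in \eqref{0iuyghjkl}) guarantees a unique minimizer, which I call $J_N$. Existence of an extremal $\phi_N$ for $M_N$ follows because the unit sphere of $f\mathscr{P}_N$ is compact (finite-dimensional) and $\phi\mapsto|\phi(0)|$ is continuous; then $\phi_N := J_N/\|J_N\|_p$ is forced by the duality calculation above.

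For uniqueness of $\phi_N$ (up to the multiplicative constant linking it to $J_N$), I would argue that if $\phi$ and $\widetilde\phi$ both attain $M_N$, then after multiplying each by an appropriate unimodular constant one obtains $\phi(0) = \widetilde\phi(0) = M_N > 0$; the associated $g := \phi/\phi(0)$ and $\widetilde g := \widetilde\phi/\widetilde\phi(0)$ then both solve the $I_N$ problem, forcing $g = \widetilde g = J_N$ by the uniqueness established above, hence $\phi = \widetilde\phi$. The two solutions $\phi_N$ and $J_N$ then satisfy $\phi_N = J_N/\|J_N\|_p$ and $J_N = \phi_N/\phi_N(0)$, which is exactly the claim that they differ by a nonzero multiplicative constant. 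The main ``obstacle'' is essentially bookkeeping: one must choose a consistent normalization (for instance $\phi_N(0) > 0$) to state uniqueness cleanly, since otherwise $\phi_N$ is only unique modulo a unimodular scalar. Apart from this, no new ideas beyond those in Proposition \ref{iiiMMMi} are needed.
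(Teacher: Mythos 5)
Your proof is correct and follows exactly the route the paper intends: the paper gives no explicit argument here, merely asserting that the statement holds ``with essentially the same proof'' as Proposition \ref{iiiMMMi}, and your write-up is a faithful fleshing-out of that — the duality $I_N M_N = 1$ by rescaling, existence and uniqueness of $J_N$ via uniform convexity and the metric projection onto the closed subspace $f\cdot z\mathscr{P}_{N-1}$, and the transfer of uniqueness to $\phi_N$. Your observation that $\phi_N$ is only unique modulo a unimodular scalar unless one fixes the normalization $\phi_N(0)>0$ (as the formula $\phi_N = J_N/\|J_N\|_p$ implicitly does) is a legitimate point the paper glosses over.
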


We retain the notation $J_N$ for the unique solution to the infimum problem (\ref{extremeeq21}) since it ties into our typical usage of $J = f - \widehat{f}$ for the $p$-inner function associated with $f$.
Then, by definition,
\[
\|J_N\|_p \leq \| J_N(z) + z\Psi(z) \|_p  \quad \mbox{for all 
$\Psi \in f \mathscr{P}_{N - 1}$.}
\]
Conversely, this condition characterizes $J_N$. 

For fixed $N \in \N$, recall the original optimal polynomial approximation problem from Definition \ref{originalpolynh}:
\begin{equation}\label{bbbbcbb1b1b11}
\inf\{\|1 + Pf \|_{p}: P \in \mathscr{P}_N \}.
\end{equation}

By the nearest point property for uniformly convex spaces, there is a unique $\Phi_N \in \ell^{p}_{A}$ for which $\Phi_N -1 \in f \mathscr{P}_N$ and the infimum in \eqref{bbbbcbb1b1b11} is attained: 
\begin{equation}\label{ooo89we754}
\|\Phi_N\|_{p} = \inf\{\|1 + Pf \|_{p}: P \in \mathscr{P}_N \}.
\end{equation}
An argument similar to the one in \cite[Ch.~8]{CMR} shows that 
$$\Phi_N = 1 - \frac{J_N}{1 + (\|J_N\|_{p}^{p} - 1)^{p' - 1}}.$$

 Write $\Phi_N = 1 - P_N f$, thereby defining the OPA $P_N = p_{N, f}$. The preceding argument justifies the following result:
\begin{Theorem}\label{invsubs} Let $N \in \N$ and $1<p<\infty$. Then
\[
p_{N, f} =  \frac{J_N/f}{1 + (\|J_N\|_{p}^{p} - 1)^{p' - 1}}.
\]
\end{Theorem}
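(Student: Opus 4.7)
The plan is to verify the identity
$$\Phi_N \;=\; 1 - \frac{J_N}{c}, \qquad c \;:=\; 1 + (\|J_N\|_p^p - 1)^{p'-1},$$
for the intermediate quantity $\Phi_N = 1 - p_{N,f} f$, and then simply divide by $f$ to read off the stated formula for $p_{N,f}$. By uniqueness of the metric projection of $1$ onto $f\mathscr{P}_N$, it suffices to show that the candidate $\widetilde{\Phi} := 1 - J_N/c$ satisfies (i) $\widetilde{\Phi} - 1 \in f\mathscr{P}_N$, and (ii) $\widetilde{\Phi} \perp_p w$ for all $w \in f\mathscr{P}_N$. Part (i) is immediate since $J_N \in f\mathscr{P}_N$. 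For (ii), the normalization $f(0) = 1$ gives the direct-sum decomposition $f\mathscr{P}_N = \C f \oplus zf\mathscr{P}_{N-1}$, so by linearity of $\perp_p$ in the second slot (Lemma \ref{BJsemi}) it suffices to verify $\widetilde{\Phi} \perp_p f$ and $\widetilde{\Phi} \perp_p zqf$ for every $q \in \mathscr{P}_{N-1}$.

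For the second family, the Taylor coefficients of $\widetilde{\Phi}$ are $1 - 1/c$ in degree $0$ and $-(J_N)_k/c$ for $k \geq 1$. Since $(zqf)_0 = 0$, only the degree-$\geq 1$ terms contribute to the semi-inner product in \eqref{eqn100}; pulling the negative real scalar $-1/c$ outside the $\langle p-1\rangle$-power (using real-homogeneity of $z \mapsto z^{\langle p-1\rangle}$) leaves a nonzero constant times $\sum_{k}(J_N)_k^{\langle p-1\rangle}(zqf)_k$, which vanishes because $J_N$, as the minimizer in \eqref{extremeeq21}, is Birkhoff-James orthogonal to $zf\mathscr{P}_{N-1}$.

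The remaining condition $\widetilde{\Phi} \perp_p f$ is what pins down the constant $c$. Writing $J_N = (1 + zR)f$ with $R \in \mathscr{P}_{N-1}$ and invoking $J_N \perp_p zRf$ yields the key identity
\[
\sum_{k \geq 0} (J_N)_k^{\langle p-1 \rangle} f_k \;=\; \sum_{k \geq 0} (J_N)_k^{\langle p-1 \rangle}(J_N)_k \;-\; \sum_{k \geq 0}(J_N)_k^{\langle p-1 \rangle}(zRf)_k \;=\; \|J_N\|_p^p.
\]
Substituting this, together with $f_0 = 1$, into $\sum_k (\widetilde{\Phi})_k^{\langle p-1 \rangle} f_k = 0$ reduces the orthogonality to $(c-1)^{p-1} = \|J_N\|_p^p - 1$, whose solution is exactly $c = 1 + (\|J_N\|_p^p - 1)^{p'-1}$ (since $p'-1 = 1/(p-1)$). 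Uniqueness of the metric projection then gives $\widetilde{\Phi} = \Phi_N$, and rearranging $J_N/c = p_{N,f}f$ delivers the theorem. The principal obstacle is establishing the identity $\sum_k (J_N)_k^{\langle p-1 \rangle} f_k = \|J_N\|_p^p$, which requires the precise form of $J_N$'s orthogonality (against $zf\mathscr{P}_{N-1}$, not against all of $f\mathscr{P}_N$); secondary care is needed for sign bookkeeping under $z \mapsto z^{\langle s\rangle}$ and for the observation that $\|J_N\|_p = I_N = 1/M_N \geq 1$, so that $(\|J_N\|_p^p - 1)^{p'-1}$ is a power of a nonnegative real.
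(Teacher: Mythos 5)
Your proposal is correct and follows essentially the route the paper intends: it verifies that the candidate $1 - J_N/c$ satisfies the Birkhoff--James orthogonality characterization of the metric projection of $1$ onto $f\mathscr{P}_N$ (splitting $f\mathscr{P}_N = \C f \oplus zf\mathscr{P}_{N-1}$ and using $J_N \perp_p zf\mathscr{P}_{N-1}$ to pin down $c$), which is precisely the computation the paper delegates to \cite[Ch.~8]{CMR}. The key identity $\sum_k (J_N)_k^{\langle p-1\rangle} f_k = \|J_N\|_p^p$ and the resulting equation $(c-1)^{p-1} = \|J_N\|_p^p - 1$ are exactly right, so no further comment is needed.
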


This extends Theorem 2.4 in \cite{RemBen} to the context of $\ell^p_A$ spaces.
Notice that $J_N/f$ is a polynomial and $p_{n, f}$ has the same roots (counting multiplicities, of course). Any extra zeros (i.e., those not of $f$) of $J_N$ will also be extra zeros of $p_{N, f}$. 

Of special importance here is the case where $N = 1$. Here, for $f \in \ell^{p}_{A}$ with $f(0) = 1$, we have 
$$I_{1} = \inf\{\|g\|_{p}: g(0) = 1, g \in f \mathscr{P}_1\} = \inf\{\|(1 - t z) f(z)\|_{p}: t \in \C\}.$$
From the definition of $J_1$ as the unique solution to the problem, we see that 
$J_{1}(z) = (1 - t_f z) f(z).$
%This means that $p_{1, f}(1/t_t) = 0$. We summarize this with the following. 

%\begin{Corollary}\label{uniqqquu}
%Suppose $f \in \ell^{p}_{A}$ with $f(0) \not = 0$. Then $t_f$ is the unique complex number which satisfies 
%$$\|(1 - t_{f} z) f(z)\|_{p} = \min_{t \in \C} \|(1 - t z) f(z)\|_{p}.$$
%Furthermore, $p_{1, f}(1/t_f) = 0$. 
%\end{Corollary}

\section{Regions excluded from $\Omega_p$}

At this point, the skeptical reader might wonder about the possibility that $\Omega_p = \C \backslash\{0\}$. The purpose of this section is to determine radii $r_p \in (\tfrac{1}{2},1)$ such that $\Omega_p \cap r_p \D = \varnothing$. This will be accomplished by using Birkhoff-James orthogonality in $\ell^{p}_{A}$ along with the  Pythagorean inequalities.

Two important tools in our analysis are the forward and backward shift operators defined on $\ell^{p}_{A}$ by 
\begin{equation}\label{987uyhujhnjui}
(S f)(z)  :=  z f(z) \quad \mbox{and} \quad (B f)(z)  :=  \frac{f(z) - f(0)}{z}.
\end{equation}
These operators  are easily checked to be bounded on $\ell^{p}_{A}$ with 
$\|B\|  = \|S\| = 1$. Furthermore, $S$ is an isometry on $\ell^{p}_{A}$. 
Related to these operators is the difference quotient operator \cite[p.~91]{CMR}.

\begin{Lemma}\label{876thjnbhyy11oOO}
If $w \in \D$ the difference quotient operator 
$$(Q_{w} f)(z)  :=  \frac{f(z) - f(w)}{z - w}$$ is bounded on $\ell^{p}_{A}$ with norm $\|Q_{w}\|$ which satisfies 
$$\|Q_{w}\| \leq \frac{1}{1 - |w|}.$$
\end{Lemma}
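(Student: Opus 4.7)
The plan is to realize $Q_w$ as a convergent operator series in the backward shift $B$ from \eqref{987uyhujhnjui}, and then sum a geometric series of operator norms. Writing $f(z)=\sum_{k=0}^{\infty} a_k z^k$, the telescoping identity $(z^k-w^k)/(z-w) = \sum_{j=0}^{k-1} z^j w^{k-1-j}$ combined with the reindexing $m = k-j-1$ gives
$$(Q_w f)(z) = \sum_{j=0}^{\infty} \Bigl(\sum_{m=0}^{\infty} a_{j+m+1}\, w^m\Bigr) z^j.$$
Since $(B^{m+1} f)(z) = \sum_{j=0}^{\infty} a_{j+m+1} z^j$, this coefficient identity is nothing but
$$Q_w f \;=\; \sum_{m=0}^{\infty} w^m\, B^{m+1} f,$$
viewed as an element of $\ell^p_A$.

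Next, I would convert this series representation into the desired norm bound. The excerpt records $\|B\| = 1$ on $\ell^p_A$, so each summand satisfies $\|w^m B^{m+1} f\|_p \le |w|^m\|f\|_p$, and the series $\sum_m |w|^m \|f\|_p$ converges because $|w|<1$. Absolute convergence in $\ell^p_A$ justifies the interchange of summation order implicit in the coefficient identity above, and the triangle inequality for $\|\cdot\|_p$ then yields
$$\|Q_w f\|_p \;\le\; \sum_{m=0}^{\infty} |w|^m\, \|f\|_p \;=\; \frac{\|f\|_p}{1-|w|}.$$
Taking the supremum over $\|f\|_p = 1$ delivers the claimed estimate $\|Q_w\| \le 1/(1-|w|)$.

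I do not anticipate any serious obstacle here; the only delicate point is the routine justification that the doubly indexed sum for the coefficients of $Q_w f$ can be reorganized freely, and this follows from the absolute convergence just noted. As a purely computational alternative one can bypass the operator-series language entirely: bound the coefficients $c_j := \sum_{m\ge 0} a_{j+m+1} w^m$ by $|c_j| \le \sum_m |a_{j+m+1}|\,|w|^m$, and then invoke Minkowski's inequality on the double sum to obtain $\bigl(\sum_j |c_j|^p\bigr)^{1/p} \le \sum_m |w|^m \bigl(\sum_j |a_{j+m+1}|^p\bigr)^{1/p} \le \|f\|_p/(1-|w|)$ in a single step.
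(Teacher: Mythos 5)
Your proof is correct. The paper itself does not prove this lemma---it is quoted from the reference \cite[p.~91]{CMR}---but your argument (expanding $Q_w=\sum_{m\geq 0}w^mB^{m+1}$ as a Neumann-type series in the backward shift and summing the geometric series of norms, or equivalently the Minkowski-inequality estimate on the coefficients $c_j=\sum_m a_{j+m+1}w^m$) is the standard derivation of this bound and matches what the cited source does.
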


Another important tool for us is the following extension of the Pythagorean theorem to $\ell^{p}_{A}$ spaces, which takes the form of a family of inequalities \cite[p.~58]{CMR}.

\begin{Lemma}\label{Pytha}
\hfill
\begin{enumerate}
\item For $2 \leq p < \infty$,
$$ f \perp_p g \implies \|f\|_{p}^{p}  + \frac{1}{2^{p - 1} - 1} \|g\|_{p}^{p} \leq \|f +g\|_{p}^{p}.$$
\item For $1 < p \leq 2$, 
$$ f \perp_p g \implies \|f\|_{p}^{2}  +(p - 1) \|g\|_{p}^{2} \leq \|f + g\|_{p}^{2}.$$
\end{enumerate}
\end{Lemma}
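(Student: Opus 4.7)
The plan is to derive both inequalities from pointwise estimates on coefficient pairs, using the Birkhoff--James condition to cancel the linear cross term. Writing $f(z) = \sum_k a_k z^k$ and $g(z) = \sum_k b_k z^k$, Lemma~\ref{BJsemi} makes the hypothesis $f\perp_p g$ equivalent to $\sum_k a_k^{\langle p-1\rangle} b_k = 0$, and by Lemma~\ref{derivform} this is precisely $\tfrac{d}{dt}\|f+tg\|_p^p\big|_{t=0} = 0$. So both inequalities amount to lower bounds on the second-order behavior of the convex function $t\mapsto \|f+tg\|_p^p$ past its critical point at $t=0$.

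For part (a), $p\geq 2$: I would first establish the coordinate-wise scalar estimate
$$
|a+b|^p \;\geq\; |a|^p + p \operatorname{Re}\bigl(a^{\langle p-1\rangle}b\bigr) + \tfrac{1}{2^{p-1}-1}|b|^p \qquad (a,b\in\mathbb{C}).
$$
The joint rotational symmetry $(a,b)\mapsto(e^{i\phi}a,e^{i\phi}b)$ reduces to $a\geq 0$; a further rotation of $b$ alone, together with homogeneity in $(a,b)$, leaves a one-variable calculus problem whose worst case pins down the sharp constant $1/(2^{p-1}-1)$. Summing over $k$ and invoking $\sum_k a_k^{\langle p-1\rangle}b_k = 0$ to kill the cross term then gives (a).

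For part (b), $1<p\leq 2$: the right-hand side is now a \emph{squared} norm, so coordinate-wise summation no longer matches degrees of homogeneity. I would instead work with $\psi(t):=\|f+tg\|_p^2$ on $\R$. Writing $\psi = h^{2/p}$ with $h(t)=\|f+tg\|_p^p$, the chain rule and Lemma~\ref{derivform} give $\psi'(0) = (2/p)\|f\|_p^{2-p}h'(0) = 0$ by Birkhoff--James orthogonality, and the remaining estimate $\psi(1)-\psi(0)\geq (p-1)\|g\|_p^2$ is the sharp strong convexity of $\|\cdot\|_p^2$ in the direction of $g$; this rests on a Hanner/Clarkson-type inequality for $\ell^p_A$ in the range $1<p\leq 2$. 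Concretely, one uses a pointwise bound of the form
$$
|a+b|^p \;\geq\; |a|^p + p \operatorname{Re}\bigl(a^{\langle p-1\rangle}b\bigr) + \tfrac{1}{2}p(p-1)(|a|+|b|)^{p-2}|b|^2,
$$
then applies H\"older's inequality to convert $\sum_k(|a_k|+|b_k|)^{p-2}|b_k|^2$ into an expression of the form $\|f+g\|_p^{p-2}\|g\|_p^2$ (using $p-2<0$), and finally divides through.

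The hard part will be part (b): because $\|\cdot\|_p^2$ does not decompose additively over coordinates and $t\mapsto|t|^p$ fails to be twice differentiable at $0$ when $p<2$, a direct Taylor argument is unavailable, and achieving the sharp constant $p-1$ forces the delicate H\"older step above -- equivalently, an appeal to the sharp uniform convexity modulus of $\ell^p$. Part (a), by contrast, reduces after summation to a single-variable scalar optimization.
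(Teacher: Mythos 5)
The paper does not actually prove this lemma: it is quoted from \cite[p.~58]{CMR}, where both parts are obtained from weak parallelogram laws for $\ell^p$ (Clarkson's inequality when $p\geq 2$, a Ball--Carlen--Lieb two-point inequality when $1<p\leq 2$) via a general argument for Birkhoff--James orthogonal pairs. Your part (a) is a viable, more self-contained alternative: the scalar inequality
$$
|a+b|^p \;\geq\; |a|^p + p\,\mathrm{Re}\bigl(a^{\langle p-1\rangle}b\bigr) + \tfrac{1}{2^{p-1}-1}|b|^p
$$
is true for all complex $a,b$ and $p\geq 2$, and summing it over Taylor coefficients kills the cross term by Lemma \ref{BJsemi}. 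Two caveats. First, the one-variable verification you defer is genuinely nontrivial: after normalizing $a=1$ and minimizing over $\arg b$ one must treat $|b|\leq 2$ (where the minimum occurs at $\cos\theta=-|b|/2$) and $|b|\geq 2$ (where it occurs at $\theta=\pi$) separately, the latter case reducing to $(2^{p-1}-1)\bigl[(s-1)^p+ps-1\bigr]\geq s^p$ for $s\geq 2$. Second, $1/(2^{p-1}-1)$ is \emph{not} the sharp scalar constant (for $p=4$ the scalar optimum is about $1/3$, not $1/7$), so the worst case does not ``pin down'' this constant; the stated inequality simply holds with room to spare.

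Part (b) has a genuine gap at the final ``divide through'' step. Write $A=\|f\|_p$, $B=\|g\|_p$, $C=\|f+g\|_p$. Even granting your pointwise bound and the H\"older step, you arrive at
$$
C^p \;\geq\; A^p + \tfrac{p(p-1)}{2}\,(A+B)^{p-2}B^2 .
$$
To reach $C^2\geq A^2+(p-1)B^2$ you must divide by $C^{p-2}$. The first term survives because $C\geq A$ and $p-2<0$, but the second term would require $\tfrac{p(p-1)}{2}(A+B)^{p-2}\geq (p-1)\,C^{p-2}$, i.e.\ $\bigl(C/(A+B)\bigr)^{2-p}\geq 2/p$; since $C\leq A+B$ and $2-p\geq 0$, the left side is at most $1<2/p$, so the step fails. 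Even if H\"older had produced the more favorable factor $C^{p-2}$ in place of $(A+B)^{p-2}$, division would only yield the constant $\tfrac{p(p-1)}{2}$, which is strictly smaller than $p-1$ for $p<2$. The obstruction is structural: any pointwise expansion that isolates the linear cross term has second-order coefficient $p(p-1)/2$, and the homogeneity mismatch between degree $p$ and degree $2$ cannot then be repaired. The sharp constant $p-1$ is obtained (as in \cite{CMR}) by a different mechanism: prove the two-point inequality $\bigl(\tfrac{1}{2}|a+b|^p+\tfrac{1}{2}|a-b|^p\bigr)^{2/p}\geq |a|^2+(p-1)|b|^2$, sum it using the reverse Minkowski inequality for the exponent $p/2\leq 1$ to obtain the weak parallelogram law $\|x+y\|_p^2+(p-1)\|x-y\|_p^2\leq 2\bigl(\|x\|_p^2+\|y\|_p^2\bigr)$, and then pass from that law to the Pythagorean inequality for $f\perp_p g$ by a separate iteration argument. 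You would need to import that machinery (or simply the statement from \cite{CMR}) to complete part (b).
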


The following proposition says that certain disks centered at the origin are excluded from $\Omega_p$.

\begin{Proposition}\label{emptydisks}
\hfill
\begin{enumerate}
\item For $p \geq 2$, 
$\Omega_p \cap \frac{1}{s} \overline{\D} = \varnothing,$ whenever 
\begin{equation}\label{exsloue88y}
(s - 1)^p + \frac{s^p}{2^{p-1}-1} \geq 1.
 \end{equation}
\item For $1 <p< 2$, 
$\Omega_p \cap \frac{1}{s} \overline{\D}  = \varnothing,$
whenever \begin{equation}\label{exsloue88y22}
s \geq (2/p)^{1/p}.
\end{equation}
\end{enumerate}
\end{Proposition}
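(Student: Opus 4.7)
The plan is to bound $|t_f|$ from above, where $t_f$ satisfies $p_{1,f}(1/t_f) = 0$, by combining the Birkhoff-James orthogonality afforded by Lemma \ref{87IUYUYUYUYUbnbnbn} with the Pythagorean inequalities of Lemma \ref{Pytha}. Setting $J_1 := (1-t_f z)f$, Lemma \ref{87IUYUYUYUYUbnbnbn} gives $J_1 \perp_p zf$, and writing the decomposition $f = J_1 + t_f zf$ together with the linearity of Birkhoff-James orthogonality in the second slot yields $J_1 \perp_p t_f zf$. Since the shift $S$ is an isometry on $\ell^p_A$, $\|t_f zf\|_p = |t_f|\|f\|_p$, while the reverse triangle inequality supplies the lower bound $\|J_1\|_p = \|f - t_f zf\|_p \geq (|t_f|-1)\|f\|_p$ for $|t_f| \geq 1$.

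For part (a), $p \geq 2$, Lemma \ref{Pytha}(a) applied to the splitting $f = J_1 + t_f zf$ gives
\[
\|J_1\|_p^p + \frac{|t_f|^p\|f\|_p^p}{2^{p-1}-1} \leq \|f\|_p^p.
\]
Inserting the triangle bound and dividing by $\|f\|_p^p$ produces
\[
(|t_f|-1)^p + \frac{|t_f|^p}{2^{p-1}-1} \leq 1.
\]
Since $\varphi(u) := (u-1)^p + u^p/(2^{p-1}-1)$ is strictly increasing on $[1,\infty)$ and the hypothesis \eqref{exsloue88y} says $\varphi(s) \geq 1$, comparison $\varphi(|t_f|)\leq 1 \leq \varphi(s)$ forces $|t_f| \leq s$, hence $|z_0|=1/|t_f| \geq 1/s$. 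The strict inequality needed to exclude the closed disk $(1/s)\overline{\D}$ follows from the observation that equality cannot hold simultaneously in both the triangle inequality and the Pythagorean inequality for a nontrivial $f$.

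For part (b), $1 < p \leq 2$, the analogous use of Lemma \ref{Pytha}(b) yields
\[
\|J_1\|_p^2 + (p-1)|t_f|^2\|f\|_p^2 \leq \|f\|_p^2,
\]
and combining with the triangle bound gives, after dividing by $\|f\|_p^2$, a relation of the form $(|t_f|-1)^2 + (p-1)|t_f|^2 \leq 1$. From here the target inequality $p|t_f|^p \leq 2$, equivalent to $|t_f| \leq (2/p)^{1/p}$, must be extracted. This is where I anticipate the main obstacle: a crude combination of the triangle and Pythagorean inequalities yields only the coarser bound $|t_f| \leq 2/p$, and $(2/p)^{1/p} < 2/p$ for $p < 2$, so a sharper argument is required. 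The likely route is to exploit additionally the constraint $\|J_1\|_p \geq |J_1(0)| = |f(0)|$ coming from the constant term of $J_1$, or to use the decomposition $J_1 = f(0) + z(Bf - t_f f)$ involving the backward shift from \eqref{987uyhujhnjui} and apply the Pythagorean inequality to this splitting; optimizing over the ratio $|f(0)|/\|f\|_p$ should tighten the resulting estimate to the desired exponent $1/p$. As in part (a), one must also verify that the inequality is strict at the boundary, so that the \emph{closed} disk $(1/s)\overline{\D}$ is excluded from $\Omega_p$.
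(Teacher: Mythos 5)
Your part (a) is essentially the paper's own argument. The paper obtains the key lower bound $\|(1-t_fz)f\|_p \geq (|t_f|-1)\|f\|_p$ by writing $f = (-1/t_f)Q_{1/t_f}\bigl((1-t_fz)f\bigr)$ and invoking $\|Q_{1/t_f}\| \leq (1-|1/t_f|)^{-1}$ from Lemma \ref{876thjnbhyy11oOO}, which is exactly your reverse triangle inequality in disguise; everything else (the orthogonal splitting $f=(1-t_fz)f+t_fzf$, Lemma \ref{Pytha}(a), monotonicity of your $\varphi$) coincides with the paper. Your remark on strictness at the boundary is sound --- equality in $\|f-t_fzf\|_p=(|t_f|-1)\|f\|_p$ would force $f$ and $zf$ to be proportional, hence $f\equiv 0$ --- and is in fact more careful than the paper, which does not address the boundary case.

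For part (b) you have identified a genuine gap, but it is a gap in the paper's own proof as much as in yours. The paper's entire proof of (b) is the sentence ``a similar calculation, but using the second Pythagorean inequality in Lemma \ref{Pytha}, yields (b)''; carrying that calculation out gives exactly what you found, namely $(|t_f|-1)^2+(p-1)|t_f|^2\leq 1$, i.e.\ $|t_f|\leq 2/p$, which corresponds to the exclusion condition $s\geq 2/p$ and not to the stated condition \eqref{exsloue88y22}. Since $(2/p)^{1/p}<2/p$ for $1<p<2$, the stated condition is strictly stronger than what this computation delivers, and the numerical table (e.g.\ $s\geq 1.21141$ for $p=1.5$) is computed from $(2/p)^{1/p}$, so this is not merely a typo in the display. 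Either the authors intend some other, unstated Pythagorean-type inequality with exponent $p$ rather than $2$, or the constant in (b) is not justified by the indicated argument. Your proposed repairs (exploiting $\|J_1\|_p\geq |f(0)|$, or a backward-shift decomposition) are left as speculation and are not carried out, so as written your argument proves (b) only with $2/p$ in place of $(2/p)^{1/p}$. That weaker bound still suffices for Corollary \ref{nozerosonehalfff} (take $s=2$) and for the qualitative facts $\tau_p<2$ and $\Omega_p\cap\frac12\overline{\D}=\varnothing$ used in Theorem \ref{MAIN}, but it does not establish \eqref{exsloue88y22} as stated.
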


\begin{proof}
Firstly, assume that $p \geq 2$. By Lemma \ref{87IUYUYUYUYUbnbnbn} write 
$f(z) = (1 - t z)  f(z) + t z f(z)$ and note that the two summands are Birkhoff-James orthogonal.

Consequently, by Lemma \ref{Pytha},
\begin{equation}\label{09876ytuik}
       \|f\|^p_p  \geq  \|(1-tz)f(z)\|^p_p  +  \frac{1}{2^{p-1}-1} \|tzf(z)\|^p_p,
\end{equation}
Next, 
\begin{align}
     f(z)  &= \frac{ (1-tz)f(z)}{1 - tz} \nonumber\\
       &=  \frac{ (1-tz)f(z) -  (1 - t/t)f(1/t)}{(-t)(z - 1/t)} \nonumber \\
       &=  (-1/t)Q_{1/t}([1 - tz]f(z)),\label{bvghnmjnh7uuHYY77}
\end{align}
where $Q_{1/t}$ is the difference quotient operator at the point $1/t$ from Lemma \ref{876thjnbhyy11oOO}. Notice we needed $1/t \in \mathbb{D}$. Therefore,
 \begin{align*}
  \|f\|^p_p  & \geq  \|(1-tz)f(z)\|^p_p  +  \frac{1}{2^{p-1}-1}\|tzf(z)\|^p_p && \mbox{(by \eqref{09876ytuik})}\\
   &   \geq  \|(1-tz)f(z)\|^p_p  +  \frac{|t|^p}{2^{p-1}-1} \|f(z)\|^p_p  &&  \mbox{($S$ is an isometry)}
   \end{align*}
   This implies that 
      $$ \Big(1 -  \frac{|t|^p}{2^{p-1}-1}\cdot 1  \Big)\|f\|^p_p \geq \|(1-tz)f(z)\|^p_p.$$
      Now use \eqref{bvghnmjnh7uuHYY77} to get 
     $$\Big(1 -  \frac{|t|^p}{2^{p-1}-1}  \Big)\|(-1/t)Q_{1/t}([1 - tz]f(z))\|^p_p \geq \|(1-tz)f(z)\|^p_p$$
     from which 
      $$\Big(1 -  \frac{|t|^p}{2^{p-1}-1}  \Big)(1/|t|^p)\|Q_{1/t}\|^p \geq 1.$$
      Lemma \ref{876thjnbhyy11oOO} yields
      $$\Big(1 -  \frac{|t|^p}{2^{p-1}-1}  \Big)\frac{(1/|t|^p)}{(1 - |1/t|)^p} \geq 1.$$
This yields the bound in \eqref{exsloue88y},
which proves (a).

 A similar calculation, but using the second Pythagorean inequality in Lemma \ref{Pytha}, yields (b).
\end{proof}

Since both inequalities  in the previous proposition are met when $s=2$, we have the following:
\begin{Corollary}\label{nozerosonehalfff}
$\Omega_{p} \cap \frac{1}{2} \overline{\D} = \varnothing$ for any $1 < p < \infty$.
\end{Corollary}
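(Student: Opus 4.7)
The plan is to simply verify that the value $s = 2$ satisfies both inequalities appearing in Proposition \ref{emptydisks}, and then invoke that proposition directly. Since $\frac{1}{s}\overline{\D} = \frac{1}{2}\overline{\D}$ when $s = 2$, the corollary follows at once from whichever case applies depending on $p$.

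For the case $p \geq 2$, I would substitute $s = 2$ into the left-hand side of \eqref{exsloue88y} and observe
\[
(2-1)^p + \frac{2^p}{2^{p-1}-1} = 1 + \frac{2^p}{2^{p-1}-1} \geq 1,
\]
since the second summand is strictly positive for every $p \geq 2$. Thus Proposition \ref{emptydisks}(a) applies and gives $\Omega_p \cap \tfrac{1}{2}\overline{\D} = \varnothing$.

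For the case $1 < p \leq 2$, I would need to check $2 \geq (2/p)^{1/p}$, equivalently $2^p \geq 2/p$, i.e.\ $p\cdot 2^{p-1} \geq 1$. The function $\varphi(p) = p \cdot 2^{p-1}$ satisfies $\varphi(1) = 1$ and $\varphi'(p) = 2^{p-1}(1 + p\ln 2) > 0$ for $p > 0$, so $\varphi(p) \geq 1$ throughout $(1,2]$, as desired. Proposition \ref{emptydisks}(b) then yields $\Omega_p \cap \tfrac{1}{2}\overline{\D} = \varnothing$.

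There is no real obstacle here since the work is already done in Proposition \ref{emptydisks}; the only thing to watch is that $s = 2$ is a common value of $s$ making both inequalities hold simultaneously across the two regimes $p \geq 2$ and $1 < p \leq 2$, so the conclusion is uniform over all $1 < p < \infty$. Consequently, the statement is a direct corollary requiring only the two elementary inequality verifications above.
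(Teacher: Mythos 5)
Your proposal is correct and is exactly the paper's approach: the paper simply observes that both inequalities \eqref{exsloue88y} and \eqref{exsloue88y22} are satisfied at $s=2$ and invokes Proposition \ref{emptydisks}. Your explicit verifications of the two inequalities are accurate and merely fill in the elementary arithmetic the paper leaves to the reader.
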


Below is a table of the values of $s$ satisfying \eqref{exsloue88y} and \eqref{exsloue88y22} and the corresponding exclusion regions $r\D = \{|z| < 1/s\}$ with $r\D\cap \Omega_p = \varnothing$.

$$\begin{tabular}{|c|c|c|}
       $p$ & $s \geq$ & $r$\\ \hline
          1.50 &  1.21141 & 0.825482\\
           1.66 & 1.11560 & 0.896378\\
                  1.75 & 1.07929 & 0.926535\\
                     1.80 & 1.06028 & 0.943147\\
       1.83 & 1.04861 & 0.953648\\
       2.1 & 1.06436 & 0.939533\\
       4 & 1.57890 & 0.633368\\
       6 & 1.72617 & 0.579318\\
       8 & 1.79348 & 0.557577\\
       10 &  1.83319  & 0.545498\\
       12 & 1.85983 & 0.537682\\
       14 & 1.87908 & 0.532175\\
       16 & 1.89367 & 0.528076
\end{tabular}$$
Notice how the value of $s \to 1$ as $p \to 2$, consistent with Proposition \ref{prop311}, which says that $\Omega_{2} \cap \overline{\D} = \varnothing$. 

The values of $s$ in the above table and in Proposition \ref{emptydisks} are not optimal, however, in the sense that they determine $\Omega_p$ completely.  Instead they furnish a simple bound for the extent of $\Omega_p$.  Much more work remains to be done; in the next section  we show that $\Omega_{p} \cap \D \not = \varnothing$, and in Theorem \ref{lastth}  we show that $\Omega_p = \C \backslash \frac{1}{\tau_{p} }\overline{\D}$.  The constant $\tau_{p}$ will be given implicitly. There is no closed formula for it and it is challenging to compute it numerically.

%The observant reader might be curious about the purpose of this section, especially  in light of the next section where we show that $\Omega_{p} \cap \D \not = \varnothing$ and Theorem \ref{lastth} where we show that $\Omega_p = \C \backslash \frac{1}{\tau_{p} }\overline{\D}$. The constant $\tau_{p}$ will be given implicitly and, although one can compute it numerically, there is no closed formula for it, or even an estimate it, besides the fact that $1 < \tau_{p} < 2$ from the results of this section. 

\begin{comment}
See Figure \ref{exclusion1} and Figure \ref{exclusion2} for examples of the behavior of the exclusion regions as $p$ nears $2$. 

\begin{figure}
 \includegraphics[width=.6\textwidth]{exclusion}
 \caption{The graph of $p$ versus  $(2^{p-1}-1)^{-1/p}.$ for $2 \leq p \leq 10$. Observe that when $p$ approaches $2$, the exclusion region $(2^{p-1}-1)^{-\frac{1}{p}} \D$ for extra zeros becomes larger until, when $p = 2$, the exclusion region is $\D$. } 
 \label{exclusion1}
\end{figure}

\begin{figure}
 \includegraphics[width=.6\textwidth]{exclusion2}
 \caption{The graph of $p$ versus  $\sqrt{p - 1}$ for $1 < p <2 $. Observe that when $p$ approaches $2$, the exclusion region $\sqrt{p - 1}\, \D$ becomes larger until, when $p = 2$, the exclusion region is $\D$. } 
 \label{exclusion2}
\end{figure}
\end{comment}

\section{Zeros inside the disk}\label{Sect4}

Proposition \ref{prop311} says that $\Omega_{2} \cap \overline{\D} = \varnothing$. Through some specific examples, this section will show that $\Omega_p \cap \D \neq \varnothing$ when $p \not = 2$. 

\begin{Proposition}\label{propo102}
Let $ 1 < p <2$. If 
 $$f_k(z)= \sum_{j=0}^k (j+1)z^j,$$
then for large enough $k$, the zero of $p_{1, f_k}$ lies in $\D$.
\end{Proposition}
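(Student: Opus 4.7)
The strategy is to turn the problem into a one-variable real optimization and then show that the minimizer exceeds $1$ for large $k$. Write $p_{1,f_k}(z) = c(1 - t_k z)$, so that the zero of interest is $1/t_k$; by Lemma \ref{87IUYUYUYUYUbnbnbn}, $t_k$ is the unique minimizer over $t \in \C$ of
\[
h(t) := \|(1 - tz)\, f_k(z)\|_p^p .
\]
Since $f_k$ has positive real coefficients, a coefficient-wise comparison shows $h(a + bi) \geq h(a)$ with equality only when $b = 0$, and $h(-a) > h(a)$ for all $a > 0$. Combined with the easy observation that $h'(0) < 0$, uniqueness of the minimizer (from uniform convexity) forces $t_k \in (0, \infty)$. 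From here on, $t$ may be taken to be a positive real variable.

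The Taylor coefficients of $(1 - tz)f_k(z)$ are $1$ at index $0$, then $(j+1) - tj$ for $1 \leq j \leq k$, and finally $-t(k+1)$ at index $k+1$. Hence
\[
h(t) = 1 + \sum_{j=1}^{k} |(j+1) - tj|^p + t^p(k+1)^p .
\]
Each summand is strictly convex in $t$ since $p > 1$, so $h$ is strictly convex on $(0, \infty)$ and its critical point is unique. Using Lemma \ref{derivform},
\[
h'(t) = -p \sum_{j=1}^{k} j\, [(j+1) - tj]^{\langle p-1 \rangle} + p\, t^{p-1}(k+1)^p .
\]

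The key calculation is at $t = 1$: every bracket equals $1$, which produces the clean identity
\[
h'(1) = p\left[ (k+1)^p - \tfrac{k(k+1)}{2}\right] = p(k+1)\left[ (k+1)^{p-1} - \tfrac{k}{2}\right] .
\]
Since $1 < p < 2$ gives $p - 1 < 1$, the quantity $(k+1)^{p-1}$ is $o(k)$, so $h'(1) < 0$ for all $k$ sufficiently large. Strict convexity then forces the unique critical point $t_k$ to lie strictly to the right of $1$, so $1/t_k \in (0, 1) \subset \D$, which is precisely the required conclusion.

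The only real obstacle, in my view, is pinning down the reduction to a positive real variable in the first step; once this is done, the sign of $h'(1)$ and the asymptotic comparison $(k+1)^{p-1} \ll k$ do essentially all the work. A conceptual takeaway is that $t = 1$ is the natural test point because $(j+1) - j = 1$ collapses the nonlinear $\langle p-1\rangle$-power to a linear sum, isolating a direct competition between $(k+1)^p$ (the top-coefficient penalty) and the triangular sum $k(k+1)/2$ (the bulk contribution), a competition that $p < 2$ tilts decisively toward the bulk.
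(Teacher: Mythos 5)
Your proof is correct and follows essentially the same route as the paper's: both reduce to the sign of the derivative of $h(t)=\|(1-tz)f_k(z)\|_p^p$ at $t=1$ (the paper phrases this via the Birkhoff--James orthogonality condition $g(t)=-h'(t)/p=0$), exploit that $(j+1)-j=1$ collapses the $\langle p-1\rangle$ powers, and conclude from $\sum_{j=1}^k j \gg (k+1)^p$ when $p<2$. The only cosmetic differences are that you justify the reality and positivity of $t_k$ by a direct coefficient-wise comparison and locate the critical point via strict convexity, whereas the paper appeals to uniqueness for realness and brackets the root by also checking the sign at $t=2$.
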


\begin{proof}
From Lemma \ref{87IUYUYUYUYUbnbnbn}, we have that \[1-p_{1, f_k} f_k(z) \perp_{p} zf_k(z).\] Since $p_{1, f}(z)= c (1-t_f z)$ unique, it must have real coefficients (otherwise, since the coefficients of $f_{k}$ are real, $\overline{c}(1-\overline{t_f}z)$ would give another optimal approximant of the same degree).  It follows from Lemma \ref{BJsemi} that a sufficient condition for $t=t_f$  is for $t_f$ to be a zero of
\begin{equation}\label{eqn110} g(t):= \sum_{j=1}^k ((j+1)-tj)^{\langle p-1 \rangle} j + (-t(k+1))^{\langle p-1 \rangle} (k+1).\end{equation}
Then we have 
$$g(2) = - \sum_{j=1}^k (j-1)^{\langle p-1 \rangle} j -2(k+1)^p < 0.$$
It remains to check that $g(1) >0$. Notice that
\[g(1) = \sum_{j=1}^k j - (k+1)^p.\]
Now use the fact that $1 < p  < 2$ to see that \[\sum_{j=1}^k j = \frac{k(k-1)}{2} \gg (k+1)^p\] for large enough $k$. Thus,  for large enough $k$, $g(1) > 0$.
\end{proof}

Let us now deal with the remaining case $p>2$. 

\begin{Proposition}\label{propo103}
Let $p>2$. If  \[f_k(z) = 1 + \sum_{j=1}^{2k} \left(2-\frac{j-1}{k}\right) z^j,\]
then for large enough $k$, the zero of $p_{1, f_k}$ lies in $\D$.
\end{Proposition}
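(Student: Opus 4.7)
The plan is to mimic the proof of Proposition \ref{propo102}. By Lemma \ref{87IUYUYUYUYUbnbnbn}, the root $1/t_{f_k}$ of $p_{1,f_k}$ is determined by the Birkhoff--James condition $(1-t_{f_k}z)f_k \perp_p zf_k$, and since $f_k$ has real coefficients, uniqueness of the optimal linear approximant forces $t_{f_k}\in\R$. Writing $a_0=1$ and $a_j = 2 - (j-1)/k$ for $j=1,\ldots,2k$, Lemma \ref{BJsemi} translates the orthogonality relation into $g(t_{f_k})=0$, where
\[
g(t) := \sum_{j=1}^{2k}\bigl(a_j - t a_{j-1}\bigr)^{\langle p-1\rangle} a_{j-1} \;+\; \bigl(-t a_{2k}\bigr)^{\langle p-1\rangle} a_{2k}.
\]
The map $h(t) := \|(1-tz)f_k\|_p^p = \sum_j |a_j - t a_{j-1}|^p$ is a sum of convex functions of $t$, strictly convex because $p>1$ and some $a_{j-1}\neq 0$, and coercive; hence it has a unique real minimizer, which is the unique real zero of $g$. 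The strategy is then: produce signs $g(1)>0$ and $g(2)<0$, so the intermediate value theorem locates $t_{f_k}$ in $(1,2)$, forcing $|1/t_{f_k}|<1$ and placing a zero of $p_{1,f_k}$ inside $\D$.

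The decisive computation is at $t=1$, where the coefficients of $f_k$ are engineered to telescope sharply: $a_1-a_0 = 1$, while $a_j - a_{j-1} = -1/k$ for every $j=2,\ldots,2k$, and $-a_{2k} = -1/k$. Since $\sum_{i=1}^{2k} a_i = 2k+1$, I get
\[
g(1) = 1 - \frac{2k+1}{k^{p-1}},
\]
which is positive for all sufficiently large $k$ precisely because $p>2$ makes $k^{p-1}$ dominate $2k+1$. (This is exactly the point at which the construction of Proposition \ref{propo102} would fail, and a different polynomial $f_k$ is needed for $p>2$.)

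For $t=2$, a brief sign analysis suffices: $a_1 - 2a_0 = 0$, and a direct calculation gives $a_j - 2a_{j-1} = -2 + (j-3)/k < 0$ for every $j=2,\ldots,2k$, so each summand $\bigl(a_j-2a_{j-1}\bigr)^{\langle p-1\rangle} a_{j-1}$ is nonpositive, while the final term $\bigl(-2a_{2k}\bigr)^{\langle p-1\rangle} a_{2k} = -2^{p-1} a_{2k}^p$ is strictly negative. Thus $g(2)<0$ for every $k$, with no size restriction.

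I do not expect a serious obstacle. The coefficients of $f_k$ are designed so that first differences telescope into a single dominant contribution of size $1$ in $g(1)$ with only $O(1/k^{p-1})$ error, which is genuinely negligible once $p>2$; that is the whole content of the proposition. The only subtlety is the uniqueness of the real zero of $g$, which reduces to the one-line strict-convexity observation for $h(t)$ above. Everything else is elementary algebra.
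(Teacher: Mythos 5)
Your proposal is correct and follows essentially the same route as the paper: the heart of both arguments is the evaluation of the derivative of $h(t)=\|(1-tz)f_k\|_p^p$ at $t=1$, where the telescoping differences $a_j-a_{j-1}=-1/k$ give $g(1)=1-(2k+1)k^{1-p}>0$ for large $k$ precisely because $p>2$. The only (harmless) difference is that you localize $t_{f_k}\in(1,2)$ by also checking $g(2)<0$, whereas the paper concludes $t_{f_k}>1$ from $h'(1)<0$ together with convexity of $h$ and the positivity of $h'$ at infinity.
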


\begin{proof}
Once more, denote by $p_{1,f}(z)=c(1-t_fz)$ the optimal linear approximant for $f_k$. As in the proof of the previous proposition, $c, t_f\in \R$ and \[1-p_{1, f} f_k(z) \perp_{p} zf_k(z).\]

From Lemma \ref{87IUYUYUYUYUbnbnbn}, this translates into the condition on $t_f$ that 
\[h(s) : = \|(1-sz)f_k(z)\|^p_p\]
must be minimized at $s=t_f$.
Since $h$ is a differentiable function of $s$ for all $p>2$, we must have $h'(t_f)=0$ and this condition will determine the value of $t_f$. By convexity, we must have that the limit as $s\rightarrow \infty$ of $h'(s)$ is positive, and thus, for $t_f$ to be larger than $1$ it is enough to show that \begin{equation}\label{eqn300}
h'(1)<0.\end{equation}

Let $a_j$ be  the Taylor coefficient of order $j$ of $f_k$.  Then
\begin{align}
 h'(1) & = \frac{d}{ds} \Big\{ |a_0|^p + \sum_{j=0}^{2k} |a_{j+1}-sa_{j}|^p \Big\}\Big|_{s = 1} \nonumber \\
 &= p \sum_{j=0}^{2k} a_j \left( a_j - a_{j+1}\right)^{p-1}.\label{eqn301}
\end{align}
With the understanding that $a_{2k +1}=0$, we have
$$(a_j -a_{j+1})^{p-1} = 
\begin{cases}
 -1 & \mbox{if } j=0\\
 k^{1-p}&  \mbox{if } 1 \leq j \leq 2 k
 \end{cases}.$$

At the same time $|a_j| \in [0,2]$ for all $j \in \N$. This means that the right-hand side of \eqref{eqn301} is bounded above  by 
$p(-1 +C k^{2-p}),$ where $C>0$ is independent of $k$. For large enough $k$ this is negative and thus the condition in \eqref{eqn300} is met.
\end{proof}

\begin{Corollary}
$\Omega_{p} \cap \D \not = \varnothing$ for $1 < p < \infty$ and $p \not = 2$.
\end{Corollary}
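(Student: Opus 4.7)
The plan is to observe that this corollary is essentially a direct assembly of the two preceding propositions, so the work has already been done. I would simply split into the two cases $1 < p < 2$ and $p > 2$ and invoke Propositions \ref{propo102} and \ref{propo103} respectively.

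First, I would verify that for each family of polynomials exhibited, $f_k(0) \neq 0$: for $1 < p < 2$ the polynomial $f_k(z) = \sum_{j=0}^{k}(j+1)z^j$ satisfies $f_k(0) = 1$, and for $p > 2$ the polynomial $f_k(z) = 1 + \sum_{j=1}^{2k}\bigl(2 - \tfrac{j-1}{k}\bigr)z^j$ also satisfies $f_k(0) = 1$. Hence $p_{1,f_k} \not\equiv 0$ by Proposition \ref{Rnozerosisi2}, and whatever zero $p_{1,f_k}$ has is a genuine inhabitant of $\Omega_p$ by definition.

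Next, for $1 < p < 2$, Proposition \ref{propo102} supplies a large $k$ such that the unique zero $1/t_{f_k}$ of $p_{1,f_k}$ satisfies $|1/t_{f_k}| < 1$; for $p > 2$, Proposition \ref{propo103} supplies the same conclusion via its chosen family. In either case we produce an explicit point of $\Omega_p \cap \D$, which yields $\Omega_p \cap \D \neq \varnothing$.

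There is no real obstacle here — the content lies entirely in Propositions \ref{propo102} and \ref{propo103}, where the sign analysis of $g(1), g(2)$ (respectively $h'(1)$) does the work. The corollary is just the packaging of these two cases into the single statement that extra zeros inside the disk genuinely occur for every $p \neq 2$ in $(1,\infty)$.
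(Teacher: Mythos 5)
Your proposal is correct and matches the paper's approach exactly: the corollary is stated as an immediate consequence of Propositions \ref{propo102} and \ref{propo103}, which respectively handle $1<p<2$ and $p>2$, and your additional check that $f_k(0)\neq 0$ is a harmless (and valid) bit of bookkeeping.
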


In other words, there are extra zeros inside the disk $\mathbb{D}$ when $p \neq 2$.

\section{A method for finding the optimal radius}

So far we have shown that for any $1 < p < \infty$, $\Omega_p$ is the complement of a certain disk centered at the origin. Moreover, when $p \not = 2$, this disk as radius less than $1$. Our objective in this section is to develop tools for finding the radius of this disk. The rotational symmetry of $\Omega_p$ enables us to focus on the zeros of optimal linear approximants on $\R$, and in fact, on $[\frac{1}{2}, 1]$.  

\subsection{A smaller problem}
A nice reduction of the problem is that we can restrict to studying the case that $f$ is a polynomial, since polynomials are dense in $\ell^p_A$ (from the definition of $\ell^{p}_{A}$) and linear approximants are well behaved under limits. Recall that for $f \in \ell^{p}_{A}$, $t_f$ is the unique complex number that satisfies 
$$\|(1 - t_{f} z) f(z)\|_{p} = \min_{t \in \C} \|(1 - t z) f(z)\|_{p}.$$

\begin{Proposition}
If $f_n \to f \in \ell^{p}_{A}$, then $t_{f_n} \to t_{f}$. 
\end{Proposition}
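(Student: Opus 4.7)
The plan is to use a standard compactness-plus-uniqueness argument. The map $(t,g) \mapsto \Psi(t,g) := \|(1-tz)g(z)\|_p$ is jointly continuous on $\C \times \ell^p_A$: if $t_n \to t$ and $g_n \to g$, then
\[
(1-t_n z)g_n - (1-tz)g = (1-tz)(g_n - g) + (t - t_n)z g_n,
\]
and both pieces go to zero in $\ell^p_A$-norm (using that $\|z h\|_p = \|h\|_p$ and that $\|g_n\|_p$ stays bounded). So $\Psi(t_n, g_n) \to \Psi(t,g)$.

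The first substantive step is to show $\{t_{f_n}\}$ is bounded. Testing the minimization with $t = 0$ gives $\|(1 - t_{f_n} z)f_n\|_p \leq \|f_n\|_p$, and the triangle inequality combined with $\|zf_n\|_p = \|f_n\|_p$ yields $|t_{f_n}| \|f_n\|_p \leq 2\|f_n\|_p$, so $|t_{f_n}| \leq 2$ provided $f_n \not\equiv 0$, which holds eventually since $f \neq 0$ (the claim is vacuous otherwise, and we implicitly assume $f(0) \neq 0$ so that $t_f$ is defined).

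Next, by Bolzano--Weierstrass, any subsequence of $(t_{f_n})$ has a further subsequence $(t_{f_{n_k}})$ converging to some $t^* \in \C$ with $|t^*| \leq 2$. Fix any $s \in \C$. By optimality,
\[
\Psi(t_{f_{n_k}}, f_{n_k}) \leq \Psi(s, f_{n_k}).
\]
Passing to the limit via joint continuity gives $\Psi(t^*, f) \leq \Psi(s, f)$ for every $s \in \C$, so $t^*$ is a minimizer of $t \mapsto \|(1-tz)f\|_p$. By uniform convexity of $\ell^p_A$ (which guarantees uniqueness of the metric projection onto the closed affine subspace $f + \C\,zf$, as invoked in Lemma \ref{87IUYUYUYUYUbnbnbn}), this minimizer is unique, so $t^* = t_f$.

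Since every subsequence of the bounded sequence $(t_{f_n})$ has a further subsequence converging to the same limit $t_f$, the whole sequence converges to $t_f$. The only mild obstacle is the joint continuity estimate, which is routine, and the boundedness of $\{t_{f_n}\}$, handled by the comparison with $t=0$ above; the essential content is simply that the minimizer depends continuously on the data because the functional $\Psi$ is jointly continuous and has a unique minimizer in $t$ for each nonzero $f$.
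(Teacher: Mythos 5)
Your proof is correct and follows essentially the same route as the paper's: bound $|t_{f_n}|$, extract a convergent subsequence, pass to the limit in the optimality inequality against arbitrary competitors, and invoke uniqueness of the metric projection (uniform convexity) before applying the subsequence principle. The only cosmetic differences are that you obtain $|t_{f_n}|\leq 2$ by an elementary triangle-inequality comparison at $t=0$ where the paper cites Proposition \ref{emptydisks}, and you package the limit passage as joint continuity of $(t,g)\mapsto\|(1-tz)g\|_{p}$ rather than the paper's explicit chain of estimates.
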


\begin{proof}
Since $(f_n)_{n = 1}^{\infty}$ is a convergent sequence,
$$M = \sup_{n \geq 1} \|f_n\|_{p} < \infty.$$
By Proposition \ref{emptydisks}, 
$$\sup_{n \geq 1} |t_{f_n}| =  K < \infty.$$  By the latter bound, there is a subsequence $(t_{f_{n_k}})_{k = 1}^{\infty}$ which converges to some $t$. Then for any $0 < |w| < 1$,
\begin{align*}
\|(1 - tz)f(z)\|_p &\leq \|(1 - t_{f_{n_k}} z)f_{n_k}(z)\|_p + |t - t_{f_{n_k}}| \|f_{n_k}\|_p\\
& \qquad  +(1 + |t|) \|f - f_{n_k}\|_p \\
&\leq \|(1 - z/w )f_{n_k}(z)\|_p + |t - t_{f_{n_k}}| M\\
& \qquad +(1 + |t|) \|f - f_{n_k}\|_p \\
&\leq \|(1 - z/w )f(z)\|_p + |t - t_{f_{n_k}}| M\\
& \qquad +(2 + |t|+ 1/|w|) \|f - f_{n_k}\|_p,
\end{align*}
where we have used Young's inequality, and the optimality assumption on $t_{f_{n_k}}$. Now let $k \to \infty$ to see that
\[
\|(1 - tz)f(z)\|_p \leq \|(1 - z/w )f(z)\|_p .
\]
That is, $(1 - tz)f(z)$ is optimal. Such $t$ must be unique, since it arises from estimating a vector by a member of a subspace in a uniformly convex space. Thus, any subsequence of $(t_n)_{n = 1}^{\infty}$ must have a further subsequence that converges to this same $t$. Thus, $t_n \to t$ and $t=t_f$.
\end{proof}

The previous problem allows us to concentrate on the problem of finding, for fixed $1<p<\infty$,
$$ T_{d,p} :=  \sup_{f \in \mathscr{P}_d} |t_f| \quad \mbox{and} \quad \tau_p := \sup_{d \in \N} T_{d,p} .$$
Note that $\tau_{p}$ is what we need in Theorem \ref{MAIN}.

 Is $T_{d,p}$ attained for some $f \in \mathscr{P}_d$?  We argue that it is.  Let $(f_k)_{k = 1}^{\infty}$ be a sequence in $\mathscr{P}_d$ such that 
$t_{f_k} \longrightarrow T_{d,p}$.  Since $t_f$ does not change when $f$ is scaled by a non-zero constant, we may assume that all of the Taylor coefficients of the $f_k$ are uniformly bounded.  Thus, there is a subsequence $(f_{n_k})_{k = 1}^{\infty}$ for which all of the separate coefficient sequences converge.  By relabeling, we may assume that the subsequence is $(f_n)_{n = 1}^{\infty}$.  We have shown that $f_n \longrightarrow f^*$ in $\ell^{p}_A$, for some polynomial $f^* \in \mathscr{P}_d$, and $t_{f^*} = T_{d,p}$.   

This also shows that $1/T_{d,p}  \in \Omega_p$ for all $d$.

To analyze this extremal problem further, we once again consider for fixed $1 < p < \infty$ and $f \in \ell^p_A$ the function 
$$  h(t)  = h_{p,f}(t)  :=   \|(1 - t z) f(z)\|_{p}^{p}.$$

Thus $h$ has a unique minimum at $t_f= 1/z_0$, where $z_0$ is the zero of $p_{1,f}$.

This next result says in our search for the extremal functions $f$ for which maximal values of $|t|$ are attained, we can assume that the coefficients of $f$ are in $\R^+$. 

\begin{Proposition}\label{propo202}
If $p\neq 2$ and $f(z) = \sum_{k=0}^{\infty} a_k z^k \in \ell^p_A$, then $|t_f| \leq |t_g|$, where
\begin{equation}\label{defofg}
      g(z) := \sum_{k=0}^{\infty} |a_k| z^k.
\end{equation}

In particular, for $d \in  \N$ and $d > 2$,  there is an $f^{*} \in \mathscr{P}_d$ whose coefficients are nonnegative and such that $T_{d,p}  = t_{f^{*}}$. 
\end{Proposition}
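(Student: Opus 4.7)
My plan is to prove $|t_f| \leq |t_g|$ in three steps: establish a pointwise coefficient majorization, reduce to a real one-variable problem using rotational symmetry, and close the argument with a convexity comparison on $\R$.

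First I would show that for every $t \in \C$,
\[
  \|(1-tz)f(z)\|_p \geq \|(1-|t|z)g(z)\|_p,
\]
by applying the reverse triangle inequality $|a_k - ta_{k-1}| \geq \bigl||a_k| - |t|\,|a_{k-1}|\bigr|$ (noting $|ta_{k-1}| = |t|\,|a_{k-1}|$) term by term, raising to the $p$-th power, and summing, with the $|a_0|^p$ contribution matching on both sides.

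Next I would invoke the rotational symmetry exploited in Proposition~\ref{1hHyyg667trRR}: replacing $f(z)$ by $f(e^{-i\arg t_f}z)$ leaves $g$ and $\|f\|_p$ invariant while rotating $t_f$ to $|t_f| \geq 0$. So I may assume $t_f \geq 0$, and the target inequality becomes $t_f \leq t_g$. Define the strictly convex real functions $\phi(r) := \|(1-rz)f\|_p^p$ and $\psi(r) := \|(1-rz)g\|_p^p$; these attain their minima at $t_f$ and $t_g$ respectively, with $\phi(0) = \psi(0) = \|f\|_p^p$. The pointwise inequality above at $t = r \geq 0$ gives $\phi \geq \psi$ on $[0,\infty)$, while the dual triangle inequality $|a_k - ra_{k-1}| \leq |a_k| + |r|\,|a_{k-1}|$ for $r \leq 0$ yields $\phi \leq \psi$ on $(-\infty,0]$. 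From this ``sign-change at the origin'' configuration coupled with the strict convexity of $\phi$ and $\psi$, I would deduce $t_f \leq t_g$. The key step is to argue that $\phi'(t_g) \geq 0$: since $\psi'(t_g) = 0$ by the Birkhoff-James orthogonality characterization (Lemma~\ref{BJsemi}), a careful analysis combining the two one-sided inequalities should pin this down, after which convexity of $\phi$ yields $t_f \leq t_g$.

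The ``in particular'' statement follows at once. Taking an attainer $f^\sharp \in \mathscr{P}_d$ of $T_{d,p}$ with $t_{f^\sharp} \geq 0$ (existence guaranteed by the compactness argument just preceding the proposition), the polynomial $g^\sharp := \sum_{k=0}^d |a_k^\sharp| z^k \in \mathscr{P}_d$ satisfies $t_{g^\sharp} \geq t_{f^\sharp} = T_{d,p}$ by the main inequality and $t_{g^\sharp} \leq T_{d,p}$ by definition, so $f^* := g^\sharp$ is the required extremizer with non-negative coefficients.

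The chief obstacle lies in the convexity step. A naive derivative comparison of $\phi'(t_g)$ against $\psi'(t_g) = 0$ is delicate for $p \neq 2$ because the semi-inner-product weights $|a_k - t_g a_{k-1}|^{p-2}$ entering $\phi'$ differ from the weights $\bigl||a_k| - t_g|a_{k-1}|\bigr|^{p-2}$ governing $\psi'$. My expectation is that one cannot conclude from the one-sided inequality $\phi \geq \psi$ on $[0,\infty)$ alone, but the dual minorization $\phi \leq \psi$ on $(-\infty,0]$ supplies the extra rigidity; using both in tandem with the strict convexity should force $\phi'(t_g) \geq 0$ and close the argument.
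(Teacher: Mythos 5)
There is a genuine gap at the step you yourself flag as the ``chief obstacle,'' and unfortunately the two one-sided inequalities do not supply the rigidity you are hoping for. Abstractly, the hypotheses you have assembled are: $\phi,\psi$ convex on $\R$, $\phi(0)=\psi(0)$, $\phi\geq\psi$ on $[0,\infty)$, and $\phi\leq\psi$ on $(-\infty,0]$. These do \emph{not} imply $\arg\min\phi\leq\arg\min\psi$. For a counterexample, take $\psi(r)=(r-1)^2$ and $\phi=\psi+D$ with $D(r)=\tfrac{2}{5}re^{-2r}$ for $r\geq0$ and $D(r)=\tfrac{2}{5}r$ for $r\leq0$. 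Then $D(0)=0$, $D\geq0$ on $[0,\infty)$, $D\leq0$ on $(-\infty,0]$, $D$ is $C^1$, and $\phi''=2+D''\geq 2-\tfrac{2}{5}\cdot4>0$, so $\phi$ is convex; yet $\phi'(1)=\psi'(1)+D'(1)=-\tfrac{2}{5}e^{-2}<0$, so the minimizer of $\phi$ lies strictly to the \emph{right} of that of $\psi$. Hence no argument using only $\phi\geq\psi$ on $\R_+$, $\phi\leq\psi$ on $\R_-$, equality at $0$, and convexity can establish $\phi'(t_g)\geq0$; you would have to use finer structure of the specific functions $h_{p,f}$ and $h_{p,g}$, which your outline does not do. (Your first two steps --- the coefficientwise majorization and the rotational reduction to $t_f\geq0$ --- are both correct; the ``in particular'' deduction is also fine once the main inequality is in hand.)

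The paper closes this gap by working at the level of the critical-point equation rather than the function values. Writing $h'(t)/(-p)=\sum_k\bigl(|a_k|^2-2t\,\Re(\bar a_ka_{k-1})+t^2|a_{k-1}|^2\bigr)^{\frac{p}{2}-1}\bigl(\Re(\bar a_ka_{k-1})-t|a_{k-1}|^2\bigr)$, one isolates a single summand and observes: if $H'$ is nondecreasing (which holds here by convexity) and $H(t)+|at-b|^p$ is critical at $T$, then replacing $b$ by any $c<b$ moves the critical point strictly down, because $a|at-c|^{p-2}(at-c)>a|at-b|^{p-2}(at-b)$ by the strict monotonicity of $x\mapsto x^{\langle p-1\rangle}$. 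Since passing from $g$ to $f$ amounts to replacing each cross term $|a_k||a_{k-1}|$ by the smaller quantity $\Re(\bar a_ka_{k-1})=|a_k||a_{k-1}|\cos\theta_k$, iterating this one term at a time yields $t_f\leq t_g$ directly. If you want to salvage your framework, you would need to replace the global comparison $\phi\geq\psi$ by this term-by-term derivative comparison; the information that gets lost in integrating up to function values is exactly what the counterexample above exploits.
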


\begin{proof}  By replacing $f(z)$ with $f(e^{i\gamma}z)$, for a suitable value of $\gamma \in \mathbb{R}$, we may assume that $t_f > 0$.  Let 
\[
    h(t) := \|(1 - tz)f(z)\|_p^p =  |a_0|^p +\sum_{k=1}^{\infty} |a_k - ta_{k-1}|^p,
\]
where $t$ is a real variable.
Then
\begin{align*}
     -h'(t)/p  &=  \sum_{k=1}^{\infty}\big( |a_k|^2-2t\Re(\bar{a}_ka_{k-1}) + t^2|a_{k-1}|^2 \big)^{\frac{p}{2}-1}\big(\Re(\bar{a}_k a_{k-1}) - t|a_{k-1}|^2 \big)\\
      &=  \sum_{k=1}^{\infty} (a_k - ta_{k-1})^{\langle p-1 \rangle}a_{k-1}.
\end{align*}
 To check that convergence is not an issue, we apply H\"{o}lder's inequality with the effect
\begin{align*}
      |h'(t)|  &\leq p \sum_{k=1}^{\infty} |a_k - ta_{k-1}|^{p-1} |a_{k-1}|\\
      &\leq p \Big( \sum_{k=1}^{\infty} |a_k - ta_{k-1}|^{q(p-1)} \Big)^{1/q}
           \Big(  \sum_{k=1}^{\infty}  |a_{k-1}|^p \Big)^{1/p}\\
      &< \infty,
\end{align*}
since $q(p-1) = p$.

The function $\phi(t) = |t|^p$ is convex in $t$ for any $p>1$, and so are its translates and sums, as well as limits of such functions.  
Hence $h(t)$ is convex, and consequently its derivative $h'(t)$ is monotone nondecreasing in $t$.

Setting that aside for the moment, suppose that $H$ is a real function on $\mathbb{R}$ such that $H'$ exists and is monotone nondecreasing, and that the expression 
\[
H(t) + |at - b|^p
\]
attains its minimum value when $t = T$.   Assume further that $a>0$ and $c<b$.

By elementary calculus,
\[
H'(T) + a|aT- b|^{p-2}(aT-b) = 0.
\]

There is a value of $t$, call it $t^*$, such that
\[
H'(t^*) + a|at^* - c|^{p-2}(at^*-c) = 0,
\]
where $b$ has been replaced by $c$.
Since $c < b$ we have 
we have
$$
at -c > at-b$$ and thus 
$$a|at-c|^{p-2}(at-c) > a|at-b|^{p-2}(at-b)$$
(the function $\psi(t) = t^{\langle p-1 \rangle}$ is monotone increasing when $p>1$).

We see that $H'(t) + a|at-c|^{p-2}(at-c) = 0$ for some $t$ satisfying $H'(t) < H'(T)$. By the assumption that $H'(t)$ is nondecreasing, this means that $t^* < T$. In other words, the minimum is achieved at a lower value of $t$.

Now apply the above observation to the function
\[
H(t) = |a_0|^p + \sum_{k=1}^{\infty} (|a_k| - t|a_{k-1}|)^p,
\]
where we are identifying $a = |a_{0}|$, $b = |a_{1}|$, and $c = |a_{1}|\cos\theta$.  Notice that $H(t)$ is just $h_{p,g}(t)$, where $g$ is given by \eqref{defofg}.

The conclusion is that $H(t)$ is critical at a higher value of $t$ than the function
\[
|a_0|^p + (|a_1|\cos\theta - t|a_{0}|)^p +  \sum_{k=2}^{\infty} (|a_k| - t|a_{k-1}|)^p  
\]
for any choice of $\theta$.  Repeat this argument successively for each term in $H(t)$.   That is, replace $|a_k| - t|a_{k-1}|$ with $|a_k|\cos \theta_k - t|a_{k-1}|$ in the $k$th instance.  The conclusion is $H(t)$ is critical at a greater value of $t$ when $\theta_k$ is chosen to be zero.
If we choose the values of $\theta_k$ along each step so that the final function is  $h_{p,f}(t)$, we find that $f$ can be the function that optimizes $t$ only if its coefficients are nonnegative.

In particular, if we restrict our attention to the case $f$ is a polynomial of degree up to $d$, then 
we have already seen that $T_{d,p}$ is attained.  The above argument applies, showing that we may assume that the extremal $f^*$ has nonnegative coefficients.
\end{proof}

\subsection{Lagrange multipliers and recurrence relations}\label{Sect6}

For each $1 < p < \infty$, and each $d \in \N$, we have seen that there is a polynomial $f \in \mathscr{P}_d$ for which the optimal value $T_{d,p} $ is attained.  Furthermore, we reduced the search for $f$ to polynomials with nonnegative coefficients. 
This enables us to use the Lagrange multiplier method to solve for the optimizing polynomial $f$, and compute the value of $T_{d,p} $.   

Given 
$f(z)=\sum_{k=0}^d a_{k} z^k,$
let $t_f=t(a_0,...,a_d)$ be that value of $t$ that minimizes
\[
     \|(1 - tz)f(z)\|_p.
\]  We want to find
\[T_{d,p} := \max_{a_0, a_1, \ldots, a_d \in \R_{+}} t(a_0,...,a_d).\] 

By rescaling $f$, which does not change the zeros of the optimal linear approximant of $f$, we may assume $a_0=1$. We will apply the Lagrange multiplier method to this positive real finite-dimensional optimization problem.

The Lagrange multiplier analysis yields the following recurrence relations for $t$ and the coefficients of $f$, a function for which the optimal $t$ is attained. 

\begin{Theorem}\label{basic1}
Let $1 < p < \infty$, with $p \neq 2$, and let $d \in \N$ with $d>2$. Then:
\begin{enumerate}
\item $T_{d,p} $ is attained for some 
$$f(z) = \sum_{k=0}^d a_k z^k \in  \mathscr{P}_d$$ whose coefficients satisfy the recurrence relation
   \begin{equation}\label{system1}
( pa_k t - a_{k+1}) |a_{k+1} - t a_k|^{p-2} = (p-1) a_{k-1} |a_k - t a_{k-1}|^{p-2}, \end{equation}
 where $t=T_{d,p} $ and we understand that $a_{-1}=a_{d+1}=0$.
\item For that choice of $f$, $T_{d,p} $ is the unique solution to the equation 
 \begin{equation}\label{system2}h'(T_{d,p} )=0,\end{equation} where $h(s) = \|(1-sz)f(z)\|_p^p$.
\end{enumerate}  
\end{Theorem}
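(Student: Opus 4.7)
My plan is to derive the recurrence by applying the Lagrange multiplier method to the finite-dimensional optimization problem, in the manner signaled by the discussion preceding the theorem. The key preparatory result is Proposition \ref{propo202}: $T_{d,p}$ is attained by some $f^\ast \in \mathscr{P}_d$ with nonnegative coefficients $(a_0^\ast,\ldots,a_d^\ast)$. Since $t_f$ is invariant under rescaling $f$ by a positive constant, and since $f^\ast(0) \neq 0$ is forced by Proposition \ref{Rnozerosisi2}, I normalize so that $a_0^\ast = 1$. The problem then becomes: maximize the implicitly defined function $t = t(a_1,\ldots,a_d)$ over $(a_1,\ldots,a_d) \in \R_{\geq 0}^d$, where $t$ is the unique real minimizer of $h_f(s) = \|(1-sz)f(z)\|_p^p$.

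Statement (b) will follow almost immediately. Indeed, writing $h_f(s) = 1 + \sum_{k=1}^{d+1} |a_k - s a_{k-1}|^p$ with $a_{d+1} = 0$, each summand with $a_{k-1}\neq 0$ is strictly convex in $s$ for $p>1$; the $k=1$ summand $|a_1 - s|^p$ is strictly convex by the normalization $a_0 = 1$. Hence $h_f$ is strictly convex on $\R$, so $h_f'$ has a unique real zero, which gives (b).

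For (a), I use the implicit function theorem. Strict convexity of $h_f$ gives $h_f''(t_f) > 0$, so the equation $h_f'(t) = 0$ presents $t$ as a $C^1$ function of $(a_1,\ldots,a_d)$ near the optimizer. At an interior maximum, $\partial t/\partial a_j = 0$ for $j = 1,\ldots,d$, which by implicit differentiation of $h_f'(t)=0$ is equivalent to $\partial_{a_j} h_f'(t) = 0$ at $t = t_f$. By Lemma \ref{derivform},
\[
   h_f'(t) \;=\; -p\sum_{k=1}^{d+1} a_{k-1}\bigl(a_k - t a_{k-1}\bigr)^{\langle p-1\rangle}.
\]
The variable $a_j$ enters this expression as $a_k$ in the $k=j$ summand and as $a_{k-1}$ in the $k=j+1$ summand. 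Differentiating these two contributions, using Lemma \ref{derivform} again and the real identity $x^{\langle p-1\rangle} = |x|^{p-2}x$, and combining and rearranging algebraically, produces exactly the recurrence \eqref{system1}, with the boundary conventions $a_{-1} = a_{d+1} = 0$ handling the endpoint indices $k = 0$ and $k = d$.

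The main obstacle I anticipate is justifying that the Lagrange condition really does apply at the optimizer, rather than a one-sided Karush--Kuhn--Tucker inequality. Concretely, I need to rule out (or accommodate) the possibility that some $a_j^\ast = 0$ for an intermediate index, placing the extremum on the boundary of $\R_{\geq 0}^d$. My plan is a perturbation argument: if $a_j^\ast = 0$ and the recurrence is violated, a small one-sided perturbation of $a_j$ can be shown to strictly increase $t_f$, contradicting optimality; this forces either interiority or the recurrence to hold in the boundary sense. A second, milder technical issue (for $p<2$) is the potential singularity of $|a_{k+1} - ta_k|^{p-2}$ when $a_{k+1} = ta_k$; I expect to verify directly at the extremum that such equalities do not occur, or else to read \eqref{system1} in the natural limiting sense. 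Neither of these issues affects the structural derivation above.
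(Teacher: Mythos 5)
Your proposal is correct and follows essentially the same route as the paper: the paper applies the Lagrange multiplier method to maximize $t$ subject to $h'(t)=0$ over the nonnegative-coefficient optimizer furnished by Proposition \ref{propo202}, and the resulting first-order conditions $\frac{\partial}{\partial a_j}h'(t)=0$ are exactly the conditions you obtain via the implicit function theorem, yielding the same computation and the same recurrence \eqref{system1}. Your additional care with strict convexity for part (b), with the boundary of $\R_{\geq 0}^d$, and with the $p<2$ singularity goes somewhat beyond what the paper records in this proof (the vanishing-coefficient issue is handled there only afterwards, in Propositions \ref{leadcoeffnonvan} and \ref{monoincr}), but it does not change the argument.
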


\begin{proof}
Since the coefficients of $f$ are assumed to be real and nonnegative (Proposition \ref{propo202}), we know that $h'(t)=0$ when the linear approximant for $f$ is of the form $c(1-tz),$ where $c>0$ and $t>0$.
The problem of determining $T_{d,p} $ is equivalent to maximizing $t$ subject to the restrictions that $h'(t)=0$, for $t, a_0,...,a_d \in \R_+$.  We apply the Lagrange multiplier method to this maximization  problem and we obtain 
\begin{equation}\label{lagrangerocksman}
\nabla (t) - \lambda \nabla(h'(t))=0,
\end{equation}
where the gradient $\nabla$ is taken with respect to the $(d+2)$-tuple of variables $(t, a_0, a_1,\ldots, a_d)$.

The first term in the left hand side of \eqref{lagrangerocksman}, $\nabla (t)$,  is just the constant vector $(1,0,...,0)$.  The first scalar equation in the system \eqref{lagrangerocksman} will merely determine the value of $\lambda$, which will not be of any use. The remaining scalar equations arising from \eqref{lagrangerocksman} are of the form \[\frac{\partial}{\partial a_j} h'(t) =0\] where $j=0,...,d$.
The result comes from calculating 
\[\frac{\partial}{\partial a_j} \frac{h'(t)}{p} = |ta_j -a_{j+1}|^{p-2} (pt a_j - a_{j+1}) - a_{j-1} (p-1) |ta_{j-1}-a_j|^{p-2}\]
and setting this equal to zero.
\end{proof}

Let us abbreviate by $\mathscr{L}_d$ the set of recurrence relations \eqref{system1}.

The recurrence relations in Theorem \ref{basic1} include the requirement
\begin{equation}\label{a2nd}
a_1 = t a_0 \ \ \mbox{or} \ \ a_1 = pta_0. \end{equation}

For the moment assume that the first of the choices is made in \eqref{a2nd}. Consider the polynomial $F(z) := b + zf(z)$, which has coefficient sequence $b, a_0 , a_1, a_2,\ldots, a_d$.  If we insist that \[a_0 = tb,\] then $F$ satisfies the recurrence relations for $k=0$, since we are essentially adding the term $0 = p(a_0 - tb)^{p-1}b$ to the right side.
Similarly, by definition, $F$ satisfies condition \eqref{a2nd}, in the form $a_0 = tb$.
Lastly, the recurrence relations remain true for the indices $k$, $2\leq k \leq d+1$. Thus $F$ also has an optimal linear approximant having a root at $1/t$ ($F$ is not necessarily extremal since the degree of $F$ is $d+1$).
Applying this observation repeatedly yields the following result.
\begin{Proposition}\label{propo78}
Let $1 < p < \infty$, with $p \neq 2$, and let $m, n \in \N$. Suppose that the real polynomial $f$ of degree $d$ satisfies the conditions \eqref{system1} for some value of $t$. Then so does the polynomial
\[
a_0 + a_0 tz + a_0 t^2 z^2 + \cdots + a_0 t^{m-1} z^{m-1} + t^m z^m f(z).
\]
\end{Proposition}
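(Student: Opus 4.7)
The plan is to prove the statement by induction on $m$, using the observation that the polynomials $G_m(z) := a_0 + a_0 tz + \cdots + a_0 t^{m-1} z^{m-1} + t^m z^m f(z)$ satisfy $G_0 = f$ and the recursion $G_{m+1}(z) = a_0 + tz\, G_m(z)$. It therefore suffices to establish a single ``prepending lemma'': if a polynomial $P(z) = \sum_{k=0}^{D} a_k z^k$ satisfies $\mathscr{L}_{D}$ at $t$, then $Q(z) := a_0 + tz\, P(z)$ satisfies $\mathscr{L}_{D+1}$ at the same $t$.

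The prepending lemma is essentially the direct verification sketched in the excerpt just before the proposition. Label the coefficients of $Q$ by $c_0 = a_0$, $c_1 = t a_0$, and $c_k = t a_{k-1}$ for $2 \leq k \leq D+1$, with $c_{-1} = c_{D+2} = 0$, and check \eqref{system1} at each index $k$. At $k = 0$, the right-hand side vanishes since $c_{-1} = 0$, and the left-hand side equals $(p-1) t a_0 \cdot |c_1 - t c_0|^{p-2}$, which vanishes because $c_1 - t c_0 = 0$ (interpreting $x\,|x|^{p-2} = 0$ at $x = 0$, consistent with the convention already in force for $\mathscr{L}_{D}$ at $k = 0$). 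At $k = 1$, both sides reduce, after the obvious substitutions, to the $k = 0$ instance of $\mathscr{L}_{D}$ for $P$. For $2 \leq k \leq D+1$, substituting $c_j = t a_{j-1}$ for $j \in \{k-1, k, k+1\}$ and factoring the common $|t|^{p-2}$ out of each difference produces a common factor of $t^{p-1}$ on both sides; cancelling it leaves precisely the $(k-1)$-st instance of $\mathscr{L}_{D}$ for $P$, which holds by hypothesis.

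The induction itself is then immediate: $G_0 = f$ satisfies $\mathscr{L}_d$ at $t$ by assumption, and if $G_m$ satisfies $\mathscr{L}_{d+m}$ at $t$, then applying the prepending lemma with $P = G_m$ shows $G_{m+1} = a_0 + tz\, G_m$ satisfies $\mathscr{L}_{d+m+1}$ at $t$. The main obstacle, already present in the discussion preceding the proposition, is the bookkeeping at the degenerate index $k = 0$ where $c_1 - tc_0 = 0$: this must be read under the same convention (coming from $\partial h'(t)/\partial a_0 = 0$) that was used to justify the boundary case of $\mathscr{L}_{D}$ at $k = 0$ for $P$ in Theorem \ref{basic1}. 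Under this convention, the construction automatically lands in the case $a_1 = ta_0$ of the disjunction \eqref{a2nd}, and the remaining verification is routine algebra.
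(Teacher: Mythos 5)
Your proof is correct and follows essentially the same route as the paper: a single "prepend one coefficient and check the shifted recurrences" step, iterated $m$ times, with the same convention needed at the degenerate index where $a_1 - ta_0 = 0$. The only cosmetic difference is that you prepend $a_0$ and multiply by $t$ (i.e., $Q = a_0 + tzP$) while the paper prepends $b = a_0/t$ without rescaling; these agree because the relations \eqref{system1} are homogeneous of degree $p-1$ in the coefficients.
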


If $a_0=0$ the linear approximant would be identically zero. Thus, without loss of generality we could assume $a_0=1$. 
In other words, in order for a $(d+2)$-tuple $(t, a_0, a_1,\ldots, a_d)$ to solve the optimization problem as described in Theorem \ref{basic1}, it must satisfy the recurrence relations
\begin{align*}
a_0 &=1\\
a_1 &= t \mbox{\ or\ } pt \\
( pa_k t - a_{k+1}) |a_{k+1} - t a_k|^{p-2} &= (p-1) a_{k-1} |a_k - ta_{k-1}|^{p-2},\ 1\leq k < d \\ 
pa_dt | - t a_d|^{p-2} &= (p-1) a_{d-1} |a_d - ta_{d-1}|^{p-2}
\end{align*}
as well as the constraint $h'(t) = 0$, where 
$$h(t) = h_{f,p}(t) := \|(1 - tz)f(z)\|_p^p$$
and $f(z)=\sum_{k=0}^d a_k z^k$.

\begin{Proposition}\label{leadcoeffnonvan}
Let $1 < p < \infty$, with $p\neq 2$, and $d \in \N$ with $d > 2$.  If $f \in \mathscr{P}_d$  satisfies $t_{f} = T_{d,p} $,  then $a_d \neq 0$. 
\end{Proposition}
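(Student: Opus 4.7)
The plan is a perturbation argument by contradiction. Assume $f(z)=\sum_{k=0}^d a_k z^k\in\mathscr{P}_d$ realizes $t_f=T_{d,p}$ but has $a_d=0$. By Proposition \ref{propo202} I may assume that $a_k\geq 0$ for every $k$, and by rescaling that $a_0>0$. Set $m:=\max\{k\geq 0 : a_k>0\}$, so that $m<d$. A direct inspection of $g(z)=1+z$ shows that its associated $h$ satisfies $h_g'(0)<0<h_g'(1)$, whence $T_{1,p}>0$, so that $T_{d,p}\geq T_{1,p}>0$ and $t_f>0$. In particular, $f$ is not a nonzero constant, so $m\geq 1$.

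The key idea is to fill the first empty slot after $a_m$ and show that this strictly raises the optimal $t$. Put $\tilde f_\varepsilon(z):=f(z)+\varepsilon z^{m+1}$ for small $\varepsilon>0$ and define $h_\varepsilon(t):=\|(1-tz)\tilde f_\varepsilon(z)\|_p^p$; uniform convexity gives a unique minimizer $t(\varepsilon)$, with $t(0)=t_f$. Since $a_k=0$ for $k>m$, only the coefficients of $(1-tz)\tilde f_\varepsilon$ at $z^{m+1}$ and $z^{m+2}$ depend on $\varepsilon$, so
\[
h_\varepsilon(t)-h_0(t)\;=\;|\varepsilon-ta_m|^p-(ta_m)^p+(t\varepsilon)^p.
\]
Differentiating in $t$ at $t=t_f$, Taylor expanding at the nonsingular point $-t_f a_m<0$, and using $h_0'(t_f)=0$, I would obtain
\[
h_\varepsilon'(t_f)\;=\;-p(p-1)\,a_m^{\,p-1}\,t_f^{\,p-2}\,\varepsilon+O\!\left(\varepsilon^{\min(2,p)}\right).
\]
Since $p>1$ and $t_f,a_m>0$, this is strictly negative for all sufficiently small $\varepsilon>0$. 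The strict convexity of $h_\varepsilon$ in $t$ then forces its unique minimizer $t(\varepsilon)$ to exceed $t_f$, hence $t_{\tilde f_\varepsilon}>T_{d,p}$, contradicting the definition of $T_{d,p}$. This forces $a_d\neq 0$.

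The only real obstacle is the Taylor expansion for $1<p<2$, where $u\mapsto|u|^p$ is only $C^1$ at $u=0$; however, the expansion is carried out at $u=-t_f a_m<0$, which is bounded away from the singularity, so it is perfectly valid. The additional term $(t\varepsilon)^p$ contributes only $O(\varepsilon^p)=o(\varepsilon)$ since $p>1$, and no other interpretive difficulty appears. With those observations the perturbation argument goes through cleanly and delivers the claim.
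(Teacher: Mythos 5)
Your proof is correct, but it is a genuinely different argument from the one in the paper. The paper disposes of this proposition in one line by invoking the Lagrange-multiplier recurrence relations of Theorem \ref{basic1}: if $a_d=0$, the last relation $pa_dt|-ta_d|^{p-2}=(p-1)a_{d-1}|a_d-ta_{d-1}|^{p-2}$ forces $a_{d-1}=0$, and cascading backwards kills every coefficient down to $a_1$, contradicting $a_1=pta_0\neq 0$. You instead give a self-contained variational argument: after the reduction to nonnegative coefficients (Proposition \ref{propo202}), you fill the first empty slot after the last nonzero coefficient $a_m$ with $\varepsilon z^{m+1}$, compute that only the $z^{m+1}$ and $z^{m+2}$ coefficients of $(1-tz)f$ change, and find $h_\varepsilon'(t_f)=-p(p-1)a_m^{p-1}t_f^{p-2}\varepsilon+O(\varepsilon^{\min(2,p)})<0$, so convexity of $h_\varepsilon$ pushes the minimizer strictly past $t_f=T_{d,p}$ --- a contradiction since $\tilde f_\varepsilon\in\mathscr{P}_{m+1}\subset\mathscr{P}_d$. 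I checked the coefficient bookkeeping and the expansion (the base point $t_fa_m>0$ is indeed away from the singularity of $u\mapsto|u|^{p-1}$, and the $(t\varepsilon)^p$ term is $o(\varepsilon)$); the computation is right. Two things each approach buys: the paper's proof is essentially free once Theorem \ref{basic1} is in hand, whereas yours avoids the recurrence machinery entirely and, as a bonus, applied to the extremizer of $\mathscr{P}_{d-1}$ padded with a zero top coefficient it immediately yields the strict monotonicity $T_{d-1,p}<T_{d,p}$ of Proposition \ref{monoincr}, which the paper instead derives \emph{from} this proposition. Two small points you should make explicit: $a_0>0$ is not arranged by rescaling but by the standing assumption $f(0)\neq 0$ (otherwise $t_f$ is undefined, Proposition \ref{Rnozerosisi2}); and to conclude $t_{\tilde f_\varepsilon}>T_{d,p}$ you need that the \emph{complex} minimizer coincides with the real one, which holds because $\tilde f_\varepsilon$ has real coefficients and the optimal linear approximant is unique (the same observation used in Proposition \ref{propo102}).
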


\begin{proof}
If $a_d = 0$, then by working backwards from the recurrence relations, we find that $a_{d-1}=0$, $a_{d-2}=0$, etc., concluding with $a_1=0$, a contradiction.
\end{proof}

From this we may obtain another important property of the optimal $t$. From the definition of $T_{d,p}$ we clearly have $T_{d + 1,p} \geq T_{d,p}$. This next proposition gives us something stronger.  

\begin{Proposition}\label{monoincr}
For  $1 < p < \infty$, with $p\neq 2$, the quantity $T_{d,p} $ is strictly increasing in $d$.
\end{Proposition}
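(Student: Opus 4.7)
The plan is to prove strict increase by perturbing an extremal polynomial in $\mathscr{P}_d$ by a monomial of degree $d+1$. By Proposition \ref{propo202} and Proposition \ref{leadcoeffnonvan}, choose $f = \sum_{k=0}^d a_k z^k \in \mathscr{P}_d$ with nonnegative real coefficients, $a_d > 0$, and $t_f = T_{d,p} =: T$. For $\epsilon > 0$, set $F_\epsilon(z) := f(z) + \epsilon z^{d+1} \in \mathscr{P}_{d+1}$ and $h_\epsilon(s) := \|(1-sz)F_\epsilon(z)\|_p^p$. It suffices to show that $h_\epsilon'(T) < 0$ for all sufficiently small $\epsilon > 0$: convexity of $h_\epsilon$ then forces its unique minimizer $t_{F_\epsilon}$ to satisfy $t_{F_\epsilon} > T$, giving $T_{d+1,p} \geq t_{F_\epsilon} > T_{d,p}$.

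For the key derivative computation, only the $z^{d+1}$ and $z^{d+2}$ coefficients of $(1-sz)F_\epsilon(z)$ depend on $\epsilon$, so
$$h_\epsilon(s) - h_0(s) = |\epsilon - sa_d|^p - (sa_d)^p + (s\epsilon)^p.$$
For $s$ in a small neighborhood of $T > 0$ and $\epsilon \in (0, Ta_d)$, the first two terms equal $(sa_d - \epsilon)^p - (sa_d)^p$, and a one-variable Taylor expansion in $\epsilon$ gives, uniformly in such $s$,
$$h_\epsilon(s) - h_0(s) = -p\epsilon(sa_d)^{p-1} + O(\epsilon^2) + O(\epsilon^p).$$
Differentiating in $s$ and using $h_0'(T) = 0$ yields
$$h_\epsilon'(T) = -p(p-1)a_d^{p-1}T^{p-2}\epsilon + o(\epsilon),$$
which is strictly negative for sufficiently small $\epsilon > 0$, since $p > 1$. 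Because $h_\epsilon$ is convex and $C^1$ in $s$ (a sum of terms $|a-sb|^p$ with $p > 1$), its minimizer is the unique root of $h_\epsilon'$, so $h_\epsilon'(T) < 0$ places it strictly to the right of $T$.

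The main technical step is verifying that the Taylor expansion in $\epsilon$ is uniform in $s$ and can be differentiated in $s$. This is straightforward: the map $(s, \epsilon) \mapsto (sa_d - \epsilon)^p$ is smooth on the region where $sa_d - \epsilon$ stays bounded away from zero, and $Ta_d > 0$. No subtlety from the possible failure of $C^2$-regularity of $h_\epsilon$ at special points (an issue when $1 < p < 2$) enters the argument, because I only expand to first order in $\epsilon$ and never in $s$; the global convexity of $h_\epsilon$ bypasses the need for information about second derivatives at $T$. The crucial point is that perturbing the degree of $f$ upward adds a genuinely new $\epsilon$-linear contribution to $h_\epsilon'(T)$ with the correct sign, and this contribution cannot be cancelled by higher-order terms.
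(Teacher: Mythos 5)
Your argument is correct, and it is genuinely different from the paper's. The paper argues by contradiction through the Lagrange machinery: if $T_{d-1,p}=T_{d,p}$, then the extremal $(d+1)$-tuple for $\mathscr{L}_{d-1}$, padded with a zero leading coefficient, would have to satisfy $\mathscr{L}_d$, contradicting Proposition \ref{leadcoeffnonvan}. You instead give a direct variational proof: perturb the degree-$d$ extremizer by $\epsilon z^{d+1}$ and show that the first-order term $-p(p-1)a_d^{p-1}T^{p-2}\epsilon$ in $h_\epsilon'(T)$ dominates the $O(\epsilon^2)$ and $O(\epsilon^p)$ corrections, pushing the minimizer strictly to the right. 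Your computation of $h_\epsilon-h_0$ is right (only the degree $d+1$ and $d+2$ coefficients of $(1-sz)F_\epsilon$ change), the joint smoothness of $(s,\epsilon)\mapsto (sa_d-\epsilon)^p$ away from the zero set justifies differentiating the expansion, and $\epsilon^p=o(\epsilon)$ for every $p>1$, so the sign conclusion holds in both regimes $1<p<2$ and $p>2$. What each approach buys: the paper's proof is shorter but rests on the full force of the Lagrange necessary conditions; yours is constructive and quantitative, exhibiting an explicit element of $\mathscr{P}_{d+1}$ with strictly larger $t$ and, in principle, a lower bound for $T_{d+1,p}-T_{d,p}$ in terms of $a_d$ and $T_{d,p}$. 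Note that both proofs consume Proposition \ref{leadcoeffnonvan} (you need $a_d>0$, since if $a_d=0$ the only surviving perturbation term $\epsilon^p(1+s^p)$ has the wrong sign); you could even dispense with it by perturbing at degree $\deg f+1$ rather than $d+1$. Two small points you should make explicit: first, that $t_{F_\epsilon}$ is real, which follows from the uniqueness of the OPA and the realness of the coefficients of $F_\epsilon$ exactly as in Proposition \ref{propo102} — without this, the inequality $h_\epsilon'(T)<0$ on the real line does not control the complex minimizer; second, that $T=T_{d,p}>0$, which is needed for $T^{p-2}$ to be meaningful when $p<2$ and for the sign of the leading term, and which follows since the extremizer has nonnegative coefficients and $h_0'(0)<0$.
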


\begin{proof}
There is a $(d+1)$-tuple $(T_{d-1,p}, a_0, a_1,\ldots, a_{d-1})$ that satisfies $\mathcal{L}_{d-1}$. Suppose, for the sake of argument, that $T_{d-1,p} = T_{d,p} $. Then the $(d+2)$-tuple $(T_{d-1,p}, a_0, a_1,\ldots, a_{d-1}, 0)$
must satisfy $\mathcal{L}_d$, in violation of Proposition \ref{leadcoeffnonvan}.
\end{proof}

From this last result, together with Proposition \ref{propo78}, it turns out that for the polynomial $f$ to correspond to an optimal $T_{d,p} $, we need to make the selection $a_1 = a_0 pt$.  We can exploit this and it leads to further reductions.

\begin{Proposition}
Let  $1 < p < \infty$, with $p\neq 2$ and let $d \in \N$ with $d > 2$.  To find $T_{d,p} $ by means of the recurrence relations $\mathscr{L}_d$, it suffices to consider polynomials 
$$f(z) = \sum_{k = 0}^{d} a_k z^k \in \mathscr{P}_d$$
such that $a_0 = 1$, $a_1 = pt$, and all of the coefficients $a_2, a_3,\ldots, a_d$ are positive.
\end{Proposition}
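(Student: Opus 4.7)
My plan is to show that any extremal polynomial at degree $d$ must have the claimed properties, so restricting the search to such polynomials does not lose any extremum. By Proposition \ref{propo202} and the existence argument preceding this statement, I take an extremal $f(z) = \sum_{k=0}^{d} a_k z^k$ with nonnegative coefficients attaining $t_f = T_{d,p} =: t$; Proposition \ref{leadcoeffnonvan} yields $a_d > 0$, and after rescaling I may assume $a_0 = 1$. Theorem \ref{basic1} supplies the relations $\mathscr{L}_d$, and in particular \eqref{a2nd} forces $a_1 \in \{t, pt\}$. The plan has two steps: (i) exclude the case $a_1 = t$ by a ``descent'' argument that contradicts the strict monotonicity of $T_{d,p}$ in $d$, and (ii) deduce strict positivity of $a_2, \ldots, a_d$ by a direct sign analysis of $\mathscr{L}_d$.

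\emph{Step (i).} Assume toward contradiction that $a_1 = t a_0 = t$. Define the tail polynomial $g(z) := (f(z) - 1)/z \in \mathscr{P}_{d-1}$, whose coefficients are $g_k = a_{k+1}$. A straightforward reindexing shows that the $\mathscr{L}_d$ relation for $f$ at index $k$ (for $2 \leq k \leq d$) is precisely the $\mathscr{L}_{d-1}$ relation for $g$ at index $k-1$. The only item to verify is the new $k=0$ relation for $g$, namely $(p g_0 t - g_1)\,|g_1 - t g_0|^{p-2} = 0$; but this is exactly the $\mathscr{L}_d$ relation for $f$ at $k=1$, whose right-hand side $(p-1)\, a_0\, |a_1 - t a_0|^{p-2}$ vanishes under the assumption $a_1 = t a_0$. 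Hence $g$ satisfies $\mathscr{L}_{d-1}$ with the same $t$. Comparing term by term,
\[
h_f(t) - h_g(t) = |a_0|^p + |a_1 - t a_0|^p - |a_1|^p,
\]
so Lemma \ref{derivform} gives $h_f'(t) - h_g'(t) = -p\, a_0\, (a_1 - t a_0)^{\langle p-1 \rangle}$, which again vanishes under $a_1 = t a_0$. Therefore $h_g'(t) = h_f'(t) = 0$. Since $h_g$ is strictly convex on $\mathbb{R}$ (the term $|g_1 - t g_0|^p$ alone is strictly convex because $g_0 = a_1 = t > 0$), the critical point $t$ is its unique global minimizer, so $t_g = t$. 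Consequently $T_{d,p} = t = t_g \leq T_{d-1, p}$, contradicting Proposition \ref{monoincr} (which applies since $d > 2$). Thus $a_1 = pt$.

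\emph{Step (ii).} We already know $a_d > 0$. Suppose some intermediate coefficient vanishes, and let $k^* \in \{2, \ldots, d-1\}$ be minimal with $a_{k^*} = 0$; then $a_{k^* - 1} > 0$ by minimality. Setting $a_{k^*} = 0$ in the $\mathscr{L}_d$ relation at index $k^*$ reduces it to
\[
-\, a_{k^*+1}^{\langle p-1 \rangle} \;=\; (p-1)\, t^{p-2}\, a_{k^*-1}^{p-1},
\]
whose right-hand side is strictly positive. Hence $a_{k^*+1} < 0$, contradicting the nonnegativity of the coefficients guaranteed by Proposition \ref{propo202}. This rules out any vanishing intermediate coefficient, completing the positivity claim.

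\emph{Main obstacle.} The delicate ingredient is the descent in Step (i): I must verify carefully that the recurrences transfer cleanly under reindexing, that the new $k = 0$ boundary relation for $g$ comes ``for free'' from the old $k = 1$ relation for $f$ under $a_1 = t a_0$, and that the same assumption annihilates exactly the $k = 0$ contribution to $h_f'(t) - h_g'(t)$. Once these two cancellations are in place, the strict convexity of $h_g$ together with the strict monotonicity $T_{d,p} > T_{d-1,p}$ closes the argument; Step (ii) is then a short sign check.
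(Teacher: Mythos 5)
Your proof is correct, and while it rests on the same two supporting facts as the paper's (nonnegativity of the extremal coefficients from Proposition \ref{propo202} and strict monotonicity of $T_{d,p}$ from Proposition \ref{monoincr}), the execution of both halves is genuinely different. For the selection $a_1 = pt$, the paper merely asserts, just before the proposition and citing Propositions \ref{propo78} and \ref{monoincr}, that this choice is necessary for optimality; your Step (i) supplies the missing descent: if $a_1 = ta_0$, the tail $g = (f-1)/z$ inherits the relations $\mathscr{L}_{d-1}$ and the critical-point equation with the same $t$, forcing $T_{d-1,p} \geq T_{d,p}$. This is the converse of Proposition \ref{propo78} made explicit, and the cancellation $h_f'(t)-h_g'(t) = -p\,a_0 (a_1 - ta_0)^{\langle p-1\rangle} = 0$ is exactly the computation the paper leaves implicit, so this half is a gain in rigor. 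For the positivity of $a_2,\ldots,a_d$, the paper argues by truncation: a vanishing $a_k$ makes $(t, a_0,\ldots,a_{k-1})$ a solution of $\mathscr{L}_{k-1}$ together with $h'(t)=0$, again contradicting monotonicity --- note that this quietly uses the fact, only established afterwards in Proposition \ref{cubiccaseb}, that $a_1 = pt$ forces $h'(t)=0$ for the truncated polynomial. Your Step (ii) instead reads off from \eqref{system1} at the first vanishing index $k^*$ that $a_{k^*}=0$ forces $a_{k^*+1}^{\langle p-1\rangle} < 0$, contradicting nonnegativity directly; this is shorter and avoids that forward reference. One inherited blemish: in Step (i) you evaluate the right-hand side $(p-1)a_0|a_1-ta_0|^{p-2}$ of the $k=1$ relation as zero when $a_1=ta_0$, which is fine for $p>2$ but for $1<p<2$ the factor $|0|^{p-2}$ is singular; the paper's own treatment of the dichotomy \eqref{a2nd} has the same issue, so you are no worse off, but a careful write-up would treat the two ranges of $p$ separately at that point.
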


\begin{proof}
Suppose that the $(d+2)$-tuple $(t, a_0, a_1,\ldots, a_d)$ solves $\mathcal{L}_d$.   We have already shown that the assumption $a_0 =1$ is no loss of generality, and that the selection of $a_1 = pta_0$ is necessary for optimality.

Next, suppose that $a_k = 0$ for some $k$, $2 \leq k < d$. By inspection of the resulting recurrence relation
\[
pa_{k-1} t |- t a_{k-1}|^{p-2} = (p-1) a_{k-2} |a_{k-1} - ta_{k-2}|^{p-2},
\] 
we find that the $(k+1)$-tuple $(T_{d,p} , a_0, a_1,\ldots, a_{k-1})$ solves the recurrence relations of   $\mathcal{L}_{k-1}$. Furthermore, the constraint equation $h'(t) = 0$ holds for the polynomial $a_0 + a_1 z + \cdots + a_{k-1} z^{k-1}$, since the assumption $a_1 = pt$ was made. Therefore 
$(T_{d,p} , a_0, a_1,\ldots, a_{k-1})$ solves all the requirements of $\mathcal{L}_{k-1}$. This contradicts Proposition \ref{monoincr}.
\end{proof}

There is yet a further reduction in the scope of the problem.  Recall that solving the Lagrange multiplier problem entails satisfying all of the recurrence relations $\mathscr{L}_d$, along with the constraint equation $h'(t)=0$, where we recall that
\begin{equation}\label{firstthree}
      h'(t)  = - \sum_{k=1}^{d} p(a_k - ta_{k-1})^{\langle p-1 \rangle}a_{k-1} + pt^{\langle p-1\rangle}|a_d|^p.
\end{equation}
It turns out that 
the condition $h'(t) = 0$  in Theorem \ref{basic1} is automatic if $a_1 = pt$ (which, by the previous Proposition, we may assume).

\begin{Proposition}\label{cubiccaseb}
Let  $1 < p < \infty$, with $p\neq 2$ and let $d \in \N$ with $d > 2$. Suppose $f(z) = a_0 + a_1 z + a_2 z^2 +\cdots+  a_d z^d$ satisfies the equations 
     \begin{align}
      a_0 &= 1 \nonumber\\
      a_1 &=  pt  \nonumber \\
      ( pa_k t - a_{k+1}) |a_{k+1} - t a_k|^{p-2} &= (p-1) a_{k-1} |a_k - ta_{k-1}|^{p-2},\ 1\leq k < d \label{boxcarb} \\ 
       pa_dt |- t a_d|^{p-2} &= (p-1) a_{d-1} |a_d - ta_{d-1}|^{p-2}. \label{cabooseb}
\end{align}
Then $h'(t) = 0$, where $h(t) := \|(1 - tz)f(z)\|_p^p$.
\end{Proposition}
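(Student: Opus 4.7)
The plan is to invoke a scaling (Euler) identity for homogeneous functions. The driving observation is that the $d+1$ hypothesized equations --- the initial condition $a_1 = pt$ together with \eqref{boxcarb} and \eqref{cabooseb} --- are precisely the statements $\partial h'(t)/\partial a_j = 0$ for $j = 0, 1, \ldots, d$, where the partial derivatives are in the coefficient variables of $f$. Once this is recognized, the conclusion $h'(t) = 0$ drops out of Euler's identity for free, with no need to manipulate the recurrences by hand.

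First I would verify the required homogeneity. Since $(1 - tz)f(z)$ has coefficients $a_k - t a_{k-1}$ (with the conventions $a_{-1} = a_{d+1} := 0$) for $k = 0, 1, \ldots, d+1$,
\[
h(t) = \|(1 - tz)f(z)\|_p^p = \sum_{k=0}^{d+1} |a_k - t a_{k-1}|^p.
\]
Replacing $a_j \mapsto \mu a_j$ for all $j$ with $\mu > 0$ scales each summand, and hence $h(t)$ and $h'(t)$, by $\mu^p$. Thus $h'(t)$ is homogeneous of degree $p$ in the $(d+1)$-tuple $(a_0, \ldots, a_d)$, and Euler's identity yields
\[
\sum_{j=0}^d a_j\, \frac{\partial h'(t)}{\partial a_j} = p \cdot h'(t).
\]

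Next I would reuse the partial-derivative computation already carried out in the proof of Theorem~\ref{basic1}: with $a_{-1} = a_{d+1} = 0$,
\[
\frac{1}{p} \frac{\partial h'(t)}{\partial a_j} = |ta_j - a_{j+1}|^{p-2}(p t a_j - a_{j+1}) \,-\, (p-1) a_{j-1} |ta_{j-1} - a_j|^{p-2}.
\]
Setting this to zero reproduces \eqref{boxcarb} for $1 \leq j \leq d-1$, and \eqref{cabooseb} for $j = d$. For $j = 0$ it collapses to $(pta_0 - a_1)|ta_0 - a_1|^{p-2} = 0$, which is satisfied under the hypotheses $a_0 = 1$ and $a_1 = pt$ because then $pta_0 - a_1 = 0$ while the other factor $|ta_0 - a_1|^{p-2} = ((p-1)t)^{p-2}$ is finite. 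Thus every partial derivative appearing in the Euler sum vanishes, and therefore $p\cdot h'(t) = 0$, giving $h'(t) = 0$ as claimed.

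The only step requiring any real insight is spotting the homogeneity shortcut: a direct attempt to massage the formula \eqref{firstthree} using the recurrence relations does not obviously telescope (the asymmetric mix of $|b_k|^{p-2}$ and $b_k^{\langle p-1\rangle}$ factors makes it awkward), whereas the Euler identity reduces the entire argument to matching hypotheses against partial derivatives.
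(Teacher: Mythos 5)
Your proof is correct, and it takes a genuinely different route from the paper. The paper proves the proposition by brute force: starting from \eqref{firstthree}, it uses the recurrences to establish, by repeated substitution, the telescoping identity $-\sum_{k=1}^n (a_k - ta_{k-1})^{\langle p-1\rangle}a_{k-1} = -\tfrac{p-1}{p}a_{n-1}a_n|a_n - ta_{n-1}|^{p-2}$ for each $n$, and then invokes \eqref{cabooseb} to cancel the final term $t^{p-1}|a_d|^p$. (So your closing remark is slightly off: the direct massage \emph{does} telescope --- that is exactly the paper's argument --- it is just about a page of computation.) Your observation that the hypothesized equations are precisely $\partial h'(t)/\partial a_j = 0$ for $j=0,\dots,d$ is accurate --- indeed that is how the recurrences were derived in Theorem \ref{basic1} --- and since $h'(t)$ is positively homogeneous of degree $p$ in $(a_0,\dots,a_d)$, Euler's identity $\sum_j a_j\,\partial h'(t)/\partial a_j = p\,h'(t)$ immediately yields $h'(t)=0$. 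The $j=0$ case is handled correctly: $a_{-1}=0$ kills the second term and $a_1 = pt a_0$ kills the first, with the factor $|ta_0-a_1|^{p-2} = ((p-1)t)^{p-2}$ finite since $p\neq 1$ and $t>0$. The only caveat is that Euler's identity needs $h'(t)$ to be differentiable in the $a_j$ at the point in question, which for $1<p<2$ requires $a_k - ta_{k-1}\neq 0$ for the relevant $k$; but this is already implicit in the statement of the recurrences themselves (they contain the factors $|a_{k+1}-ta_k|^{p-2}$), and the paper's own substitutions rely on the same nonvanishing, so you are on equal footing. What your approach buys is brevity and a structural explanation of \emph{why} the constraint $h'(t)=0$ comes for free from the Lagrange system; what the paper's computation buys is a completely elementary, self-contained verification with no appeal to homogeneity.
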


\begin{proof}  

We begin with the first three terms of the formula for $h'(t)$ from \eqref{firstthree}.  Use substitution and simplification to get
\begin{align}
      ( p^2 t^2- a_{2}) |a_{2} - pt^2|^{p-2} &= (p-1)^{p-1} t^{p-2}\label{f2ndb}\\ 
            ( pa_2 t - a_{3}) |a_{3} - t a_2|^{p-2} &= (p-1) pt |a_2 - pt^2|^{p-2}\label{f3rdb}\\ 
       pa_3^{\langle p-1 \rangle} t ^{p-1} &= (p-1) a_{2} |a_3 - ta_{2}|^{p-2}\label{f4thb}
\end{align}
recalling that $t>0$.

Straightforward calculation yields
\begin{align}
  &\qquad -\sum_{k=1}^3 (a_k - t a_{k-1})^{\langle p-1\rangle} a_{k-1} \\
   &= -(p-1)^{p-1} t^{p-1} - (a_2 - pt^2)^{\langle p-1 \rangle} pt - (a_3 - ta_2)^{\langle p-1\rangle} a_2  \nonumber\\
       &= -(p-1)^{p-1} t^{p-1} - |a_2 - pt^2|^{p-2}(a_2 - pt^2) pt\nonumber\\
       & \qquad  - |a_3 - ta_2|^{p-2} (a_3 - ta_2)a_2.
     \label{hprimeb}
\end{align}
Use \eqref{f2ndb} to rewrite the first term in \eqref{hprimeb}.  The result is
\begin{align}
   {\ } &= -t(p^2 t^2 - a_2)|a_2-pt^2|^{p-2} - (a_2 -pt^2)^{\langle p-1\rangle}pt\nonumber\\
     &\quad  - (a_3-ta_2)^{\langle p-1 \rangle }a_2 \nonumber\\
     &= -(p-1)a_2 t|a_2 - pt^2|^{p-2}  - (a_3-ta_2)^{\langle p-1 \rangle }a_2 \label{monkeybarsb}.
\end{align}
Next, use \eqref{f3rdb} to rewrite the first term of \eqref{monkeybarsb} with the result
\begin{align*}
     {\ } &=  -(p-1)a_2 t \cdot\frac{1}{(p-1)pt} (pa_2 t - a_3)|a_3 - ta_2|^{p-2} - (a_3-ta_2)^{\langle p-1 \rangle }a_2  \\
          &= [-(1/p)(pa_2 t - a_3) ]  |a_3 -ta_2|^{p-2}a_2 - (a_3-ta_2)^{\langle p-1 \rangle }a_2\\
          &=  -\frac{p-1}{p} a_2 a_3 |a_3 - t a_2|^{p-2}.
\end{align*}

We have just established that
\[
      -\sum_{k=1}^3 (a_k - t a_{k-1})^{\langle p-1\rangle} a_{k-1} = -\frac{p-1}{p} a_2 a_3 |a_3 - t a_2|^{p-2}.
\]
By \eqref{boxcarb} with $k=3$, we have
\begin{align*}
    &  -\sum_{k=1}^3 (a_k - t a_{k-1})^{\langle p-1\rangle} a_{k-1}\\ &= -\frac{p-1}{p} a_2 a_3 |a_3 - t a_2| ^{p-2} \\
               &=  -\frac{1}{p} a_3 (pa_3 t - a_4)|a_4 - t a_3|^{p-2}\\
               &=  (-a_3^2 t + a_3 a_4 /p) |a_4 - t a_3|^{p-2}.
\end{align*}

From this it follows that 
\begin{align*}
    &  -\sum_{k=1}^4 (a_k - t a_{k-1})^{\langle p-1\rangle} a_{k-1}\\
    &= (-a_3^2 t + a_3 a_4 /p) (a_4 - t a_3)^{p-2} - (a_4 - t a_3)^{\langle p-1\rangle} a_3\\ 
      &= \big[  -a_3^2 t + a_3 a_4 /p  - a_3 a_4 +a_3^2 t  \big] |a_4 - t a_3|^{p-2}  \\
      &= - \frac{p-1}{p} a_3 a_4 |a_4 - t a_3|^{p-2}.
\end{align*}
Repeating this argument leads to
\[
           -\sum_{k=1}^d (a_k - t a_{k-1})^{\langle p-1\rangle} a_{k-1} =   - \frac{p-1}{p} a_{d-1} a_d |a_d - t a_{d-1}|^{p-2}.
\]
Finally, from \eqref{cabooseb} we find that the last term of $h'(t)$ is
\[
       t^{p-1}|a_d|^p = \frac{p-1}{p} a_{d-1} a_d |a_d - ta_{d-1}|^{p-2}.
\]
  In conclusion, $h'(t) = 0$.
\end{proof}

This shows that if we make the selection $a_1 = a_0 p t$ (rather than the other choice, $a_1 = a_0 t$), then the recurrence relations already imply that $h'(t) = 0$. 

Theorem \ref{basic1}  gives us a method for calculating $T_{d,p} $ and finding one of infinitely many polynomials $f$ for which it is attained.  Here is a list of solutions obtained by using Mathematica to solve $\mathscr{L}_d$ numerically.  Here we chose $p$ to be an even integer, since in that case the numerical calculations converge much more quickly.  We found that in all cases, except when $d=2$ and $p=4$, there is an extra zero $1/t$ inside the disk $\mathbb{D}$.

\begin{tabular}{|c|c|c|c|}
\hline$d$ & $p$ & $1/t$ & $f(z)$ \\ \hline
$2$ & $4$ & 
$1.09638$ & 
$1 + 3.64836z + 1.92310z^2$\\ \hline
$2$ &
$6$ & 
$0.95629$ & 
$1 + 6.27424z + 3.36907z^2$\\ \hline
$2$ & 
$8$ & 
$0.88193$ & 
$1 + 9.07101z + 4.96676z^2$\\ \hline
$2$ & 
$10$ & 
$0.83568$ & 
$1 + 11.9663z + 6.65305z^2$\\ \hline
 %
%$2$ & 
%$12$ & 
%$0.80401$ & 
%$1 + 14.9252z + 8.39734 z^2$\\ \hline
 %
%$2$ & 
%$14$ & 
%$0.78088$ & 
%$1 + 17.9286z + 10.18251z^2$\\ \hline
 %
%$2$ & 
%$16$ & 
%$0.76320$ & 
%$1 + 20.9645z + 11.99775z^2$\\ \hline
 %
%$2$ & 
%$18$ & 
%$0.74921$ & 
%$1 + 24.0254z + 14.22688z^2$\\ \hline
%
%$2$ & 
%$20$ & 
%$0.73785$ & 
%$1 + 27.10592z + 15.69283z^2$\\ \hline
 %
$3$ & 
$4$ & 
$0.94921$ & 
$1 + 4.21406z + 3.01393z^2 + 1.65036z^3$\\ \hline
$3$ & 
$6$ & 
$0.82606$ & 
$1 + 7.26338z + 5.34352z^2 + 3.00715z^3$\\ \hline
$3$ & 
$8$ & 
$0.76236$ & 
$1 + 10.4938z +7.89188 z^2 + 4.54074z^3$\\ \hline
$ 3$ & 
$10$ & 
$0.72322$ & 
$1 + 13.8270z + 10.57437z^2 + 6.18409z^3$\\ \hline
% 
%$3$ & 
%$12$ & 
%$0.69662$ & 
%$1 + 17.22610z + 13.31800z^2 + 7.88476z^3$\\ \hline
% 
$4$ & 
$4$ & 
$0.89213$ & 
$1 + 4.48365z + 3.59236z^2 + 2.59647z^3 + 1.44035z^4$\\ \hline
$4$ & 
$6$ & 
$0.77760$ & 
$1 + 7.71608z + 6.35232z^2 + 4.74328z^3 + 2.71501z^4$\\ \hline
$4$ & 
$8$ & 
$0.71878$ & 
$1 + 11.13000z + 9.37221z^2 + 7.14758z^3 + 4.18719z^4$\\ \hline
$4$ & 
$10$ & 
$0.68277$ & 
$1 + 14.6463z + 12.51665z^2 + 9.69994z^3 + 5.77764z^4$\\  \hline
%
%$5$ & 
%$4$ & 
%$0.86460$ & 
%$1 + 4.62643z + 3.91391z^2 + 3.14289z^3 + 2.28068z^4 $\\ 
%
% & & & $ + 1.27255z^5$\\ \hline
%
%$5$ & 
%$6$ & 
%$0.75522$ & 
%$1 + 7.94471z + 6.90416z^2 + 5.71357z^3 + 4.28357z^4  $\\ 
%
% & & & $+ 2.47082z^5$ \\ \hline
%
%$6$ & 
%$4$ & 
%$0.85965$ & 
%$1 + 4.70779z + 4.10181z^2 + 3.46889z^3 + 2.78755z^4 $\\
%& & & $+ 2.02655z^5 + 1.13418z^6$\\ \hline
\end{tabular}

%%%%%%%%%%%%%%%

\section{A dynamical systems approach}\label{Sect7}

By definition,  $z_0 \in \Omega_p$ if the optimal polynomial for some non-constant $f \in \ell^{p}_{A}$ vanishes at $z_0$. The results of the previous sections have shown that $\tau_p \in (1, 2)$ and either 
$$\Omega_p = \C \backslash\textstyle{\frac{1}{\tau_{p}}} \D, \quad \text{or} \quad \Omega_p = \C \backslash\textstyle{\frac{1}{\tau_{p}}} \overline{\D},$$ 
where 
$$\tau_{p} = \lim_{d \to \infty} T_{d, p}.$$
Recall that the limit above exists since $T_{d}$ is bounded and strictly increasing. As mentioned, when $p = 2$, $\Omega_2 = \C \backslash\overline{\D}$. We know from previous sections that when $p \neq 2$, $1<\tau_p<2$. In the present section, we will calculate $\tau_{p}$ by means of a dynamical system derived from the recurrence relations of Theorem \ref{basic1}.   The qualitative nature of the dynamical system depends on whether $1<p<2$ or $2<p<\infty$.  Details will be provided for the latter case, and the former can be handled in an analogous way.

%%%%%%%%%%%%%%%%

Fix $p > 2$. We transform the recurrence relations \eqref{system1} for the coefficients $\{a_k\}$ into recurrence relations for the ratios 
$$R_{k} = \frac{a_k}{a_{k-1}}, \quad k\in \mathbb{N}.$$
These take the form
\begin{equation}\label{recrelrk}
(pt - R_{k+1})|R_{k+1}-t|^{p-2} = (p-1)\frac{1}{R_k}\Big|1 - \frac{t}{R_k}\Big|^{p-2},
\end{equation}
$1 \leq k < d$.
The equation \eqref{recrelrk} can equivalently be expressed as
\begin{equation}\label{phipsirel}
\Phi(R_{k+1}) = \Psi(R_k),\quad 1 \leq k < d,
\end{equation}
where
$$
\Phi(x) := (pt - x)|x-t|^{p-2}$$
and 
$$\Psi(x):= (p-1)\frac{1}{x}\Big|1 - \frac{t}{x}\Big|^{p-2}.
$$
See Figure \ref{kksdf}.
Let us 
 stress that $\Phi$ and $\Psi$ depend on the values of $p$ and $t$.
If we are able to find $R_1$, $R_2$,\ldots, $R_d$ and $t$ satisfying \eqref{phipsirel}, then by taking $a_0 = 1$ we obtain a solution $a_0$, $a_1$, $a_2$,\ldots, $a_d$ and $t$ for \eqref{system1}.

Our plan is to implement the relation \eqref{phipsirel} graphically, along with the requirements that 
$$R_1 = pt \quad \mbox{and} \quad R_{d+1} =0.$$  This gives rise to an implicit dynamical system with two parameters, $p$ and $t$.  By analyzing this dynamical system we will be able to identify solutions to the recurrence relations \eqref{recrelrk} for the ratios $R_k$, and discern which solutions correspond to optimal values of $t$. 

% 
\begin{comment}
\begin{figure}[h]
 \includegraphics[width=.6\textwidth]{Phi_one}
 \caption{The graph of $\Phi(x)$ when $p = 4$ and $t = 1.2$.} 
 \label{Phi_one}
\end{figure}
\begin{figure}[h]
 \includegraphics[width=.6\textwidth]{Psi_one}
 \caption{The graph of $\Psi(x)$ when $p = 4$ and $t = 1.2$.} 
 \label{Psi_one}
\end{figure}
\end{comment}

%
The following lemmas concerning the structure of $\Phi$ and $\Psi$ are easy to verify from basic calculus.
\begin{Lemma}\label{firstlem}
Let $p>2$ and $t \geq 1$.
Then
\begin{enumerate}
\item $\Phi$ is decreasing on the intervals $(-\infty,t]$ and $[(p-1)t, \infty)$ and increasing on the interval $[t,(p-1)t]$;
\item $\Phi$ intercepts the axes at the points $(0, pt^{p-1})$, $(t,0)$ and $(pt,0)$;
\item $\Phi$ attains a local minimum at the point $(t,0)$ and a local maximum at the point $((p-1)t, (p-2)^{p-2}t^{p-1})$.
\end{enumerate}
\end{Lemma}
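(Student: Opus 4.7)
The plan is to split the absolute value at the kink $x=t$ and apply the product rule on each side. On $x>t$ the function equals $(pt-x)(x-t)^{p-2}$, and on $x<t$ it equals $(pt-x)(t-x)^{p-2}$; both branches are smooth on their respective open intervals, so the monotonicity and the critical points can be read off from elementary calculus.

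First I would dispatch part (b) by direct substitution: $\Phi(0)=pt\cdot t^{p-2}=pt^{p-1}$, while the zeros of $\Phi$ come from the two factors $pt-x$ and $|x-t|^{p-2}$, producing exactly the real roots $x=t$ and $x=pt$. For part (a), a product-rule computation, followed by extracting a factor of $p-1$ from the bracketed linear expression (which uses the identity $1+p(p-2)=(p-1)^2$), yields
\[
\Phi'(x)=(p-1)(x-t)^{p-3}\bigl[(p-1)t-x\bigr]
\]
on $(t,\infty)$ and
\[
\Phi'(x)=-(p-1)(t-x)^{p-3}\bigl[(p-1)t-x\bigr]
\]
on $(-\infty,t)$. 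Since $(x-t)^{p-3}$ and $(t-x)^{p-3}$ are strictly positive on their respective domains and $(p-1)t>t>x$ on $(-\infty,t)$, the derivative is strictly negative throughout $(-\infty,t)$; on $(t,\infty)$ its sign matches that of $(p-1)t-x$, so $\Phi$ is increasing on $(t,(p-1)t)$ and decreasing on $((p-1)t,\infty)$.

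For part (c), the value $\Phi(t)=0$ together with the fact that $\Phi$ is nonnegative in a neighborhood of $t$ (both factors are nonnegative there) shows that $(t,0)$ is a local minimum, while substituting $x=(p-1)t$ into the right branch gives $\Phi((p-1)t)=t\cdot((p-2)t)^{p-2}=(p-2)^{p-2}t^{p-1}$, which by (a) is a local maximum. The only subtlety worth flagging is that $\Phi'$ is undefined at $x=t$ when $2<p<3$ (a cusp), but this is harmless: $\Phi$ is continuous at $t$ and strictly positive on either side, so $t$ remains a genuine local minimum without any appeal to the first derivative there. I do not anticipate any real obstacle; the whole lemma is just careful bookkeeping with the product rule and sign analysis.
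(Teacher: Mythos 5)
Your proof is correct and is precisely the elementary calculus verification that the paper omits (it dismisses this lemma with ``easy to verify from basic calculus''); the branch-splitting at $x=t$, the factorization of $\Phi'$ via $1+p(p-2)=(p-1)^2$, and the sign analysis are all sound. Your remark about the cusp at $x=t$ for $2<p\leq 3$ is a worthwhile detail that the paper does not mention, and you handle it correctly by falling back on continuity and the sign of $\Phi$ near $t$.
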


\begin{Lemma}
Let $p>2$ and $t \geq 1$.
Then
\begin{enumerate}
\item $\Psi(x) \geq 0$ if and only if $x>0$, with equality precisely when $x=t$;
\item $\Psi$ is decreasing on the intervals $(-\infty,0)$, $(0,t)$ and $[(p-1)t, \infty)$, and increasing on the interval $[t,(p-1)t]$;
\item $\Psi$ has a vertical asymptote at $x=0$, with $\lim_{x\rightarrow0-}\Psi(x) = -\infty$ and $\lim_{x\rightarrow0+}\Psi(x) = +\infty$, and a horizontal asymptote at $y=0$, with $\lim_{x\rightarrow \pm \infty}\Psi(x) = 0$;
\item $\Psi$ has an $x$-intercept at the point $(t,0)$;
\item $\Psi$ attains a local minimum at the point $(t,0)$, and a local maximum at $\Big((p-1)t, \frac{1}{t}\big(\frac{p-2}{p-1}\big)^{p-2}\Big)$.
\end{enumerate}
\end{Lemma}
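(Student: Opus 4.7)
My plan is to verify the five claims by direct calculus, resolving the absolute values on the intervals $(-\infty,0)$, $(0,t)$, $(t,(p-1)t)$, and $((p-1)t,\infty)$ separately. First I would rewrite
\[
\Psi(x) = (p-1)\,\frac{|x-t|^{p-2}}{x\,|x|^{p-2}},
\]
which makes the sign of $\Psi(x)$ visibly equal to the sign of $x$ whenever $x\neq t$, with $\Psi(t)=0$. This immediately settles (a) and (d). For (c), the vertical asymptote follows from $|1-t/x|^{p-2}\to +\infty$ as $x\to 0^{\pm}$ combined with the sign of $(p-1)/x$, while the horizontal asymptote follows from $|1-t/x|^{p-2}\to 1$ and $(p-1)/x\to 0$ as $|x|\to\infty$.

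For (b) and (e), I would differentiate $\Psi$ on each open subinterval where the absolute values resolve. On $(t,\infty)$, a short computation yields
\[
\Psi'(x)=(p-1)\,\frac{(x-t)^{p-3}}{x^{p}}\,\bigl[(p-1)t-x\bigr],
\]
whose sign analysis gives strict increase on $[t,(p-1)t]$, strict decrease on $[(p-1)t,\infty)$, and a local maximum at $x=(p-1)t$; substituting that value and simplifying produces the claimed $\frac{1}{t}\bigl(\frac{p-2}{p-1}\bigr)^{p-2}$. A parallel calculation on $(0,t)$ yields
\[
\Psi'(x)=(p-1)\,\frac{(t-x)^{p-3}}{x^{p}}\,\bigl[x-(p-1)t\bigr],
\]
which is strictly negative throughout $(0,t)$ because $x<t<(p-1)t$ forces the bracket to be negative while the other factors are positive. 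For $(-\infty,0)$, the substitution $u=-x>0$ turns $\Psi$ into $-(p-1)(t+u)^{p-2}u^{-(p-1)}$, and a similar computation gives $\Psi'(x)<0$ throughout. Combined with $\Psi(t)=0$ and the continuity of $\Psi$ at $x=t$, the sign change of $\Psi'$ across $x=t$ on $(0,\infty)$ certifies the local minimum at $(t,0)$ in (e).

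Because the statement is purely real-analytic and the formulas above are explicit, the only ``obstacle'' will be the bookkeeping of signs when differentiating $|x-t|^{p-2}$ and $|x|^{p-2}$ piecewise, together with keeping track of the fact that $(t-x)^{p-3}$ and $(x-t)^{p-3}$ remain positive for any real exponent whenever the base is positive. There is no conceptual difficulty and no appeal is needed to material outside of single-variable calculus.
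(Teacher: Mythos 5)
Your proof is correct, and it follows exactly the route the paper intends: the paper offers no proof of this lemma, stating only that it is ``easy to verify from basic calculus,'' and your piecewise resolution of the absolute values together with the explicit derivative formulas (which I have checked, including the value $\tfrac{1}{t}\bigl(\tfrac{p-2}{p-1}\bigr)^{p-2}$ at $x=(p-1)t$) supplies that verification in full.
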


In addition, the graphs of the functions $\Phi$ and $\Psi$ interact in the following ways. Figure \ref{kksdf4} captures some of these effects.

\begin{Lemma}\label{3rdlem}
Let $p>2$ and $t \geq 1$.
Then
\begin{enumerate}
\item If $t>1$, there exists a neighborhood of $t$ in which $\Phi(x) \geq \Psi(x)$, with equality precisely at $x=t$;
\item If $t=1$, then there exists a neighborhood of $t$ in which $\Phi(x) >\Psi(x)$ when $x>t$, and $\Phi(x) < \Psi(x)$ when $x<t$;
\item If $t>1$, then there are exactly three values of $x$ for which $\Phi(x) = \Psi(x)$, including $x=t$, with $x=t$ lying between the other two such points (we will call these ``fixed points'');
\item If $t=1$, then there are exactly two values of $x$ for which $\Phi(x) = \Psi(x)$, including $x=t=1$, with the other such point being greater;
\item As the parameter $t$ increases, $\Psi(pt)$ decreases, and $\Phi(\xi_1)$ increases, where $\xi_1$ is the least of the fixed points.
\end{enumerate}
\end{Lemma}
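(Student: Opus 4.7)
The plan begins with reducing the fixed-point equation $\Phi(x) = \Psi(x)$ to a tractable polynomial equation. Since $\Phi$ is negative on $(pt, \infty)$ while $\Psi$ is positive on $(0, \infty)\setminus\{t\}$, and since $\Phi(x) > 0 > \Psi(x)$ for $x < 0$, every solution lies in $(0, pt]$. The point $x = t$ is always a fixed point because $\Phi(t) = \Psi(t) = 0$. For $x \in (0, pt)$ with $x \neq t$, both sides are strictly positive, and direct division yields
\[
\frac{\Phi(x)}{\Psi(x)} = \frac{x^{p-1}(pt - x)}{p - 1}.
\]
So the fixed points other than $t$ are in bijection with the solutions of $g(x) := x^{p-1}(pt - x) = p - 1$ on $(0, pt)\setminus\{t\}$.

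For parts (a) and (b), I would compare $\Phi$ and $\Psi$ locally at $x = t$ by setting $x = t + u$ to obtain
\[
\Phi(t+u) - \Psi(t+u) = |u|^{p-2}\Bigl[(p-1)t - u - \tfrac{p-1}{(t+u)^{p-1}}\Bigr].
\]
At $u = 0$ the bracket equals $(p-1)(t - t^{-(p-1)})$, which is strictly positive when $t > 1$; continuity then supplies a neighborhood on which $\Phi - \Psi \geq 0$ with equality only at $u = 0$, proving (a). When $t = 1$ this leading term vanishes and a Taylor expansion of $(1+u)^{-(p-1)}$ gives bracket $= p(p-2)\,u + O(u^2)$; since $p > 2$, the bracket has the sign of $u$ near zero, which, multiplied by the nonnegative factor $|u|^{p-2}$, gives (b).

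For (c) and (d), I would analyze $g$ on $(0, pt)$ via $g'(x) = px^{p-2}[(p-1)t - x]$: $g$ strictly increases on $(0, (p-1)t)$, strictly decreases on $((p-1)t, pt)$, attains maximum value $(p-1)^{p-1}t^p$, and vanishes at both endpoints. Since $p > 2$ forces $(p-1)^{p-2} > 1$ and $t^p \geq 1$ for $t \geq 1$, this maximum exceeds $p - 1$, so by the intermediate value theorem there are exactly two solutions $\xi_1 < \xi_3$ of $g(x) = p - 1$, separated by $(p-1)t$. The value $g(t) = (p-1)t^p$ locates $t$ among them: if $t > 1$, then $g(t) > p - 1$ places $t$ strictly between $\xi_1$ and $\xi_3$, producing the three fixed points $\xi_1 < t < \xi_3$ of (c); if $t = 1$, then $g(t) = p - 1$ and $t = 1 < p-1 = (p-1)t$ force $\xi_1 = t$, leaving exactly one additional fixed point $\xi_3 > t$, which gives (d).

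For (e), the formula $\Psi(pt) = (p-1)^{p-1}/(p^{p-1} t)$ makes the first claim immediate. For $\Phi(\xi_1)$, implicit differentiation of $g(\xi_1(t), t) = p - 1$ combined with $\partial_t g = p\xi_1^{p-1} > 0$ and $\partial_x g(\xi_1) = p\xi_1^{p-2}[(p-1)t - \xi_1] > 0$ (using $\xi_1 < (p-1)t$) gives $\xi_1'(t) < 0$. Writing $\Phi(\xi_1) = (pt - \xi_1)(t - \xi_1)^{p-2}$, both factors strictly increase as $t$ grows, so $\Phi(\xi_1)$ does too. The main obstacle I expect is the second-order comparison in (b), where the leading terms of $\Phi$ and $\Psi$ cancel exactly at $t = 1$ and the sign of the difference relies on tracking the quadratic correction; once this is in hand, everything else reduces to a routine single-variable analysis of $g$.
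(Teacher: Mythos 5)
Your proof is correct and follows essentially the same route as the paper's: the same reduction of $\Phi(x)=\Psi(x)$ to the polynomial equation $x^{p-1}(pt-x)=p-1$ for parts (c)--(d), the same local substitution $x=t+u$ for parts (a)--(b), and the same ``$\xi_1$ moves left as $t$ increases'' monotonicity argument for part (e). If anything you are more careful than the paper in part (b), where the paper's stated first-order approximations of $\Phi$ and $\Psi$ coincide at $t=1$ and it is precisely your explicit second-order term $p(p-2)u$ that closes the argument.
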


%%%%%
%%%%%%
%%%%%%%
%%%%%%%

\begin{proof}
Assertions (a) and (b) follow from substituting $x = t + \epsilon$ into $\Phi$ and $\Psi$, and finding that for small $\epsilon$, we have
\begin{align*}
\Phi(t+\epsilon) &\approx (p-1)t\epsilon^{p-2}\\
\Psi(t+ \epsilon) &\approx (p-1)\frac{\epsilon^{p-2}}{t^{p-1}}.
\end{align*}
In order to establish (c), based on (a) and the behavior of $\Phi$ and $\Psi$ near $x=0$, it is elementary that there are at least one fixed point in $(0,t)$, one at $t$ and one between $t$ and $pt$. If $x \neq t$, the fixed point condition can be simplified from
\[
(pt-x)|x-t|^{p-2} = (p-1)\frac{1}{x}\Big|1 - \frac{t}{x}\Big|^{p-2}\]
to the much simpler
\[g(x):=x^{p}-ptx^{p-1}+ (p-1) =0.\]
Since the function $g$ has only one change of direction for $x>0$, there cannot be more than 2 fixed points apart from the one at $x=t$.
We can make the same simplifications in order to derive (d) and (e) (we already know that $x=t$ is a fixed point).  This time, when $t=1$, one of the solutions to the simplified equation is still at $x=t$.

Next, consider the graphs of the two equations
\[
y = pt-x\ \ \mbox{and} \ \ y = \frac{p-1}{x^{p-1}}.
\]
They intersect at two points; as $t$ increases, the rightmost intersection point $\xi_2$ tends to the right, while the leftmost intersection point $\xi_1$ tends to the left. It is elementary to see that $\Psi(pt)$ decreases as $t$ increases; $\Phi(\xi_1)$ increases with $t$ due to Lemma \ref{firstlem}, part (a). 
\end{proof}
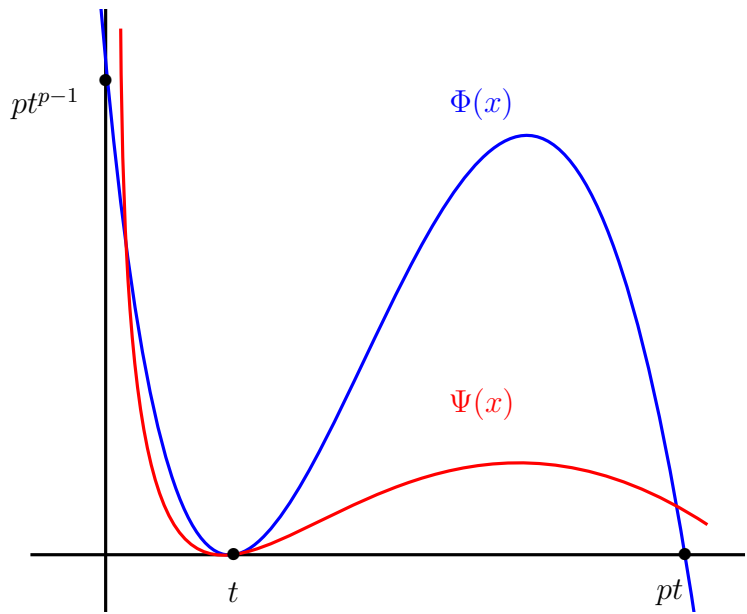
\begin{figure}[h]
\centering
\begin{tikzpicture}[scale = 1.0]
\clip (-1.5,-0.75) rectangle + (10,8);
\draw[very thick, black] (-1, 0) -- (8.5, 0);
\draw[very thick, black] (0, -2) -- (0, 8);
 \draw[very thick, blue] (-0.2, 9) .. controls (1.58, -15.4) and (5, 20) ..  (8, -2);
  \draw[very thick, red] (0.2, 7) .. controls (0.3, -6.2) and (2.5, 4) ..  (8, 0.4);
  \node at (5,6) {${\color{blue}\Phi(x)}$};
    \node at (5,2) {${\color{red}\Psi(x)}$};
    \node [black] at (1.7,0) {\textbullet};
        \node [black] at (0, 6.3) {\textbullet};
        \node at (1.7,-.5) {$t$};
          \node at (-0.8, 6) {$p t^{p - 1}$};
            \node [black] at (7.7,0) {\textbullet};
               \node at (7.5,-.5) {$pt$};
        \end{tikzpicture}
\caption{The graphs of $\Phi(x)$ and $\Psi(x)$.}
\label{kksdf}
\end{figure}
Figure \ref{kksdf} illustrates some of the important features of the functions $\Phi$ and $\Psi$.  
For instance, the graphs intersect at exactly three points.  One of these points is always $(t,0)$.  The other two intersection points are especially important in what follows.  The $x$-intercept of $\Phi$ at $(pt,0)$ plays a role, reflecting that $R_1 = pt$.  Similarly, the $y$-intercept of $\Phi$ at $(0,pt^{p-1})$ corresponds to the desired condition $R_{d+1}=0$.  How these points relate, and how the entire picture varies with $t$, will be relevant to the analysis below.

\begin{Remark}  All of the figures appearing in this paper were created out of Bezier curves using Tikz.   They are intended to illustrate the qualitative behavior of the functions, and present a graphical solution to the Lagrange multiplier problem.  They are not, however, the actual graphs of $\Phi$ and $\Psi$; this is because there is no linear scale at which the critical features of mathematically accurate graphs could be discerned by the human eye, and a nonlinear scale would distort the straight line segments that will play a central part in the solution.
\end{Remark}

We now describe a dynamical system associated with $\Phi$ and $\Psi$.  Suppose that a point $x_1 \in \mathbb{R}$ is selected.  If this point represents the value of $R_1$, then $R_2$ can be determined by the equation $\Phi(R_2) = \Psi(x_1)$.  To do this graphically, we first draw a vertical line segment connecting the points $(x_1, 0)$ and $(x_1, \Psi(x_1))$.  Then we consider the horizontal line passing through the latter point.  This horizontal line must intersect the graph of $y = \Phi(x)$ at one, two or three points.  If $(x_2, \Phi(x_2))$ is such an intersection point, then this tells us $x_2$ is a candidate for the value of $R_2$.   Let us accordingly draw a horizontal line segment connecting the points $(x_1, \Psi(x_1))$ and $(x_2, \Phi(x_2))$.  By continuing in this fashion, we obtain a path consisting of  alternating vertical and horizontal line segments, intersecting the graphs of $\Psi$ and $\Phi$. Generally, there are multiple such paths, reflecting that there may be multiple candidates for $x_n$ at the $n$th step.   Each associated sequence of abscissas, $x_1$, $x_2$, $x_3$,\ldots, serves as a candidate for the sequence of values of the coefficient ratios $R_1$, $R_2$, $R_3$,\ldots.   These paths are called the ``orbits'' or ``trajectories'' of the initial point $x_1$.  Of course we are principally interested in choosing $x_1 = pt$.

The vast majority of these orbits do not correspond to valid solutions of the Lagrange multiplier problem for extremal functions $f$.  This is because they do not ``terminate,'' or to be more precise, there is no value of $d \in \mathbb{N}$ for which $x_d = 0$.  It is easy to see that there are orbits that converge to the fixed points of the system, that is, the points $\xi \in \mathbb{R}$ at which $\Phi(\xi) = \Psi(\xi)$.   In the proof of Lemma \ref{3rdlem} we have already found that there are three fixed points $\xi_1$, $t$ and $\xi_2$, satisfying $0< \xi_1 < t < \xi_2$.   In order for 
an orbit to terminate, it must be that the orbit enters the region of the graph where $0 \leq x < \xi_1$.  It must further be the case that the orbit exactly reaches the point $(0, pt^{p-1})$, where the graph of $\Phi$ and the $y$-axis intersect.   In this way, a finite set of coefficients satisfying \eqref{system1} is produced. 
\begin{figure}[h]
\centering
\begin{tikzpicture}[scale = 1.0]
\clip (-0.5,-0.75) rectangle + (8.5,8);
\draw[very thick, black] (-1, 0) -- (8.5, 0);
\draw[very thick, black] (0, -2) -- (0, 8);
 \draw[very thick, blue] (-0.2, 9) .. controls (1.58, -15.4) and (5, 20) ..  (8, -2);
  \draw[very thick, red] (0.2, 7) .. controls (0.3, -6.2) and (2.5, 4) ..  (8, 0.4);
  \node at (5,6) {${\color{blue}\Phi(x)}$};
    \node at (5,2) {${\color{red}\Psi(x)}$};
    \node [black] at (1.7,0) {\textbullet};
        \node at (1.7,-.5) { $t$};
            \node [black] at (7.7,0) {\textbullet};
               \node at (7.1,-.3) {{\tiny $R_1 = pt$}};
               \draw[green, very thick] (7.68, 0) -- (7.66, 0.66);
                \node [green] at (7.66,0.66) {\textbullet};
               \draw[green, very thick] (1, 0.7) -- (7.66, 0.66);
                \node [green] at (1,0.7) {\textbullet};
                 \draw[green, very thick] (1, 0.7) -- (1, 0.3);
                    \node at (0.9, -.3) {{\tiny $R_2$}};
                   \node [black] at (1,0) {\textbullet};
                  \node [green] at (1,0.3) {\textbullet};
                   \draw[green, very thick] (1, 0.3) -- (1.2, 0.3);
                   \node [green] at (1.2,0.3) {\textbullet};
                    \node [black] at (1.2,0) {\textbullet};
                     \node at (1.25, -.3) {{\tiny $R_3$}};
                      \node at (1.55, -.3) {...};
                    \draw[green, very thick] (1.2, 0.3) -- (1.2, 0.1);
                    \node [green] at (1.2,0.1) {\textbullet};
                      \draw[green, very thick] (1.2, 0.1) -- (1.4, 0.1);
                      \node [green] at (1.4,0.1) {\textbullet};
                      \draw[green, very thick] (1.4, 0.1) -- (1.4, 0.01);
                        \node [green] at (1.4,0.01) {\textbullet};
                        \draw[green, very thick] (1.4, 0.01) -- (1.55, 0.01);
                         \node [green] at (1.55,0.01) {\textbullet};
        \end{tikzpicture}
\caption{The graphs of $\Phi(x)$ and $\Psi(x)$ with an orbit which does not terminate with $R_{d} = 0$ for some $d$.}
\label{kksdf2}
\end{figure}
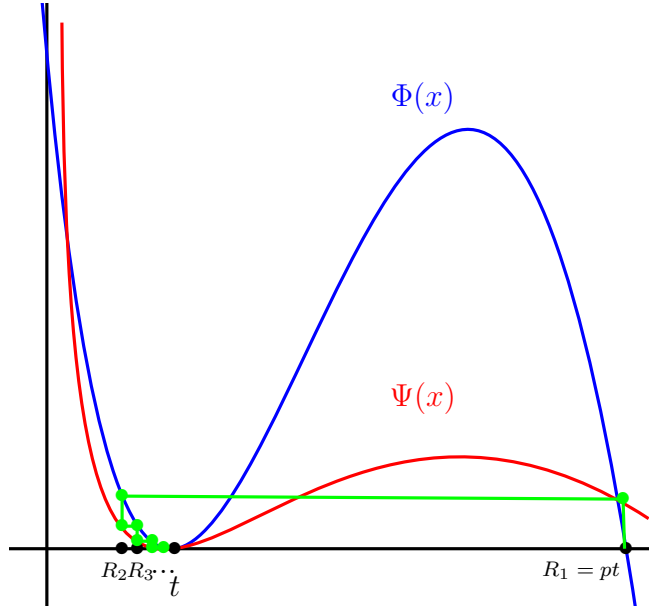

In Figure \ref{kksdf2}, the orbit is destined to converge to the point $(t,0)$, and cannot correspond to a solution to the recurrence relations.  In fact, since the output from the leftmost fixed point is higher than that of the rightmost fixed point, there can be no orbit originating from $(pt,0)$ that can reach the exit point $(0, pt^{p-1})$.  Accordingly, there is no solution to the Lagrange system attached to this value of $t$.

\begin{figure}[h]
\centering
\begin{tikzpicture}[scale = 1.0]
\clip (-0.7,-0.75) rectangle + (8.5,8);
\draw[very thick, black] (-1, 0) -- (8.5, 0);
\draw[very thick, black] (0, -2) -- (0, 8);
 \draw[very thick, blue] (-0.26, 9) .. controls (1.4, -15.3) and (5.4, 19.8) ..  (8, -2);
  \draw[very thick, red] (0.4, 7) .. controls (0.5, -6.4) and (2.5, 4) ..  (8, 1.9);
      \draw[green, very thick] (7.75, 0) -- (7.75, 2);
       \node [black] at (7.75, 0) {\textbullet};
         \node at (7.6, -.3) {{\tiny $R_1$}};
       \draw[green, very thick] (0.55,2) -- (7.75, 2);
       \node at (0.55, 0) {\textbullet};
         \node at (0.55, -.3) {{\tiny $R_2$}};
          \node [green] at (0.4,2.7) {\textbullet};
          \node [black] at (0.25, -0.3) {{\tiny $R_3$}};
           \draw[green, very thick] (0.55,2) -- (7.75, 2);
            \node [black] at (0.4, 0) {\textbullet};
             \node [green] at (7.75, 2) {\textbullet};
                \node [black] at (0, 0) {\textbullet};
                 \node [black] at (-0.2, -0.3) {{\tiny $R_4$}};
             \draw[green, very thick] (0.55,2) -- (0.55, 2.7);
              \node [green] at (0.55,2) {\textbullet};
              \draw[green, very thick] (0.4,2.7) -- (0.55, 2.7);
               \node [green] at (0.55,2.7) {\textbullet};
               \draw[green, very thick] (0.4,2.7) -- (0.4, 6);
               \node [green] at (0.4, 6) {\textbullet};
               \draw[green, very thick] (0, 6) -- (0.4, 6);
                \node [green] at (0, 6) {\textbullet};
                  \node [black] at (-0.37, 6) {{\tiny $p t^{p - 1}$}};
                    \node at (5,6) {${\color{blue}\Phi(x)}$};
    \node at (5,2.5) {${\color{red}\Psi(x)}$};
\end{tikzpicture}
\caption{A successful choice of $t$ that produces an orbit reaching the desired exit point.}
\label{kksdf3}
\end{figure}
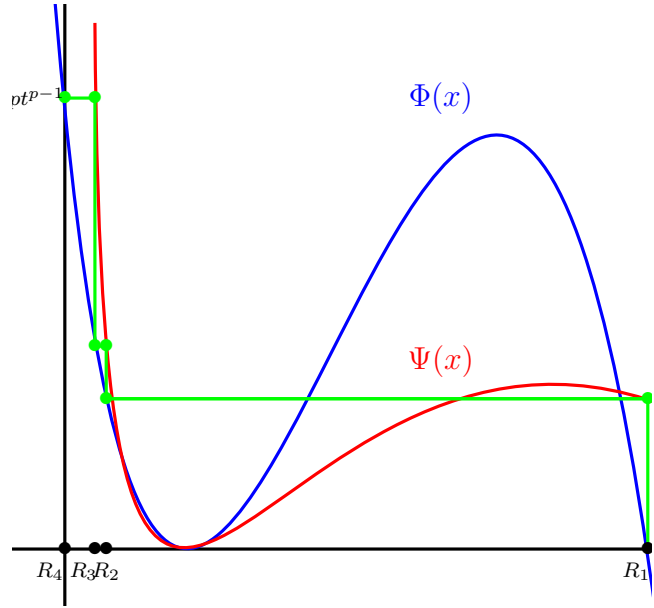

Figure \ref{kksdf3} presents a different situation, in which the choice of $t$ gives rise to an orbit that does hit the exit point. In fact, we can see that $R_4 = 0$, and hence the orbit corresponds to a cubic solution to the Lagrange multiplier problem.

\begin{figure}[h]
\centering
\begin{tikzpicture}[scale = 1.0]
\clip (-0.7,-0.75) rectangle + (9,8);
\draw[very thick, black] (-1, 0) -- (8.5, 0);
\draw[very thick, black] (0, -2) -- (0, 8);
 \draw[very thick, blue] (-0.26, 9) .. controls (1.4, -15.3) and (5.4, 19.8) ..  (8, -2);
  \draw[very thick, red] (0.4, 7) .. controls (0.5, -6.4) and (2.5, 4) ..  (8, 1.9);
                  \node [black] at (-0.35, 6) {{\tiny $p t^{p - 1}$}};
                     \node [black] at (1.5, -0.3) {$t$};
                      \node [black] at (8, -0.3) {$p t$};
                       \node [black] at (1.6, 0) {\textbullet};
                         \node [black] at (7.75, 0) {\textbullet};
                           \draw[magenta, very thick] (7.35, 0) -- (7.35, 2.1);
                            \node [black] at (7.35, 0) {\textbullet};
                             \node [green!40!red] at (7.35, -0.3) {$\xi_2$};
                              \draw[magenta, very thick] (0.8, 0) -- (0.8, 1.1);
                               \node [black] at (0.8, 0) {\textbullet};
                             \node [green!40!red] at (0.8, -0.3) {$\xi_1$};
                             \draw [decorate,
	decoration = {brace}] (0.65, 0) --  (0.65, 1);
	 \node [orange] at (0.05, 0.5 ) {inc w/$t$};
	  \draw [decorate,
	decoration = {brace}] (7.2, 0) -- (7.2, 2.1);
	 \node [orange] at (6.5, 1 ) {dec w/$t$};
	   \node at (5,6) {${\color{blue}\Phi(x)}$};
    \node at (5,2.5) {${\color{red}\Psi(x)}$};
\end{tikzpicture}
\caption{$\xi_1$ and $\xi_2$ are two of the three fixed points. The starting point is $(p t, 0)$ while the desired exit point is $(0, p t^{p - 1})$. }
\label{kksdf4}
\end{figure}
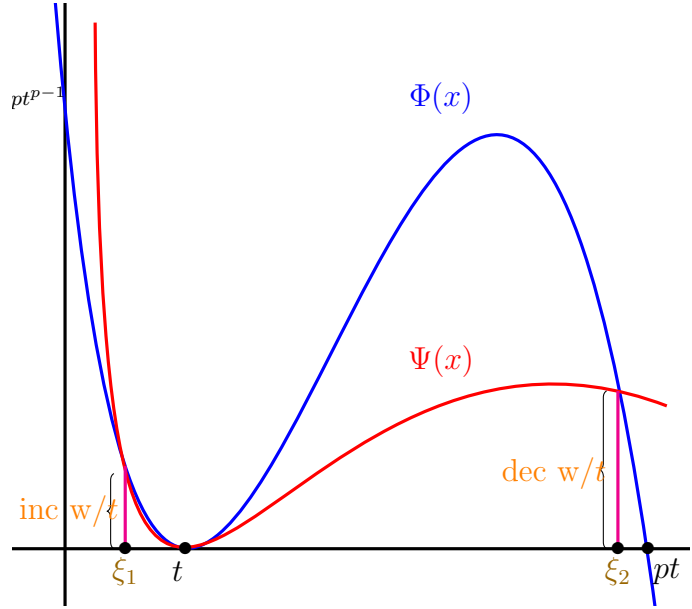

In both cases it is clear that the relative positioning of the points $(\xi_1, \Phi(\xi_1))$ and $(\xi_2, \Phi(\xi_2))$ has a significant effect on the eventual fate of the orbits.  In fact, when $t=1$,  $\Phi(\xi_1)$ starts out less than $\Phi(\xi_2)$; as $t$ increases, $\Phi(\xi_1)$ rises relative to  $\Phi(\xi_2)$, until they are equal at some unique value $t = \tau$.  Figure \ref{kksdf4} captures some of these effects.

The following theorem provides a means to calculate $\tau_p$, and hence the radius $1/\tau_p$ of the largest disk excluded from $\Omega_p$.
To prove this theorem, we need to look at the pre-orbits of the exit point $(0, pt^{p-1})$.

\begin{Theorem}\label{lastth}
$1 < p < \infty$, with $p\neq 2$, and for $t>1$, let $\xi_1=\xi_1(t)$ and $\xi_2=\xi_2(t)$ denote  the two distinct positive solutions to the equation
\[
x^{p-1}(pt-x) = p-1,
\]
with $\xi_1<\xi_2$.
Then there is a unique value $\tau$ of $t$ for which
\[
(pt-\xi_1)|\xi_1 - t|^{p-2} = (pt-\xi_2)|\xi_2 - t|^{p-2},
\]
and it satisfies
\[
  \tau = \tau_p =  \lim_{d\rightarrow\infty} T_{d,p} .
\]
Moreover, there is no solution in $\ell^p_A$ to the recurrence relations with $t=\tau_p$.
\end{Theorem}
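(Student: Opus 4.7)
The plan is structured in three pieces: existence and uniqueness of $\tau$; the identification $\tau = \tau_p = \lim_d T_{d,p}$; and non-attainment of $t = \tau$ by any $\ell^p_A$ solution.

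For existence and uniqueness, set $F(t) := \Phi_t(\xi_1(t)) - \Phi_t(\xi_2(t))$ for $t > 1$. Smoothness of $\xi_j(\cdot)$ follows from the implicit function theorem applied to $g(x,t) = x^{p-1}(pt-x) - (p-1)$, whose $x$-derivative is nonvanishing at the two positive roots. Lemma \ref{3rdlem}(e) already gives $t \mapsto \Phi_t(\xi_1(t))$ strictly increasing; a parallel implicit differentiation, combining $\dot\xi_2 = -\xi_2/[(p-1)t - \xi_2] > 0$ with the signs of $\partial_t \Psi$ and $\partial_x \Psi$ at $\xi_2$, shows $t \mapsto \Phi_t(\xi_2(t))$ strictly decreasing. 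At $t = 1^+$, $\xi_1 \to 1 = t$ forces $\Phi_t(\xi_1) \to 0$ while $\Phi_t(\xi_2)$ stays positive, so $F(1^+) < 0$; as $t \to \infty$, the estimates $\xi_1 \to 0$ and $pt - \xi_2 \sim (p-1)(pt)^{-(p-1)}$ give $\Phi_t(\xi_1) \sim pt^{p-1}$ and $\Phi_t(\xi_2) \to 0$, so $F(t) \to \infty$. Strict monotonicity plus the intermediate value theorem yield a unique $\tau > 1$ with $F(\tau) = 0$.

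For the identification step, the crucial dynamical input is the clean formula
\[
\left.\frac{d}{dR}\bigl[\Phi^{-1}(\Psi(R))\bigr]\right|_{R = \xi} = \frac{1}{\xi^{p}},
\]
valid uniformly at every fixed point $\xi$ and on every monotone branch of $\Phi$, coming from the cancellation of the common factor $(p-1)(t-R)^{p-3}$ between $\Psi'$ and $\Phi'$. This classifies $\xi_1 < 1$ as repelling under left-branch iteration and $\xi_2 > 1$ as attracting under right-branch iteration. For $t$ slightly below $\tau$, build a terminating orbit in four segments: start at $R_1 = pt$; place $R_2$ on the right-hand decreasing branch of $\Phi^{-1}$ near $\xi_2$ and iterate there for $k_1$ steps, dwelling near $\xi_2$; then execute a single branch switch, defining $R_{k_1+1}$ as the preimage of $\Psi(R_{k_1})$ on the left branch $(0,t)$, which because $\Phi(\xi_1) < \Phi(\xi_2)$ at this $t$ lands strictly inside $(R^*, \xi_1)$, where $R^* \in (0,t)$ is the unique point with $\Psi(R^*) = pt^{p-1}$; finally iterate $k_2$ more times on the left branch, drifting outward from the repelling $\xi_1$ and arranging $R_d = R^*$ so that $R_{d+1} = 0$. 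As $t \nearrow \tau$ the landing point of the switch approaches $\xi_1$ from below, so both $k_1$ and $k_2$ can be made arbitrarily large; combined with the strict monotonicity of $T_{d,p}$ from Proposition \ref{monoincr}, this yields $\lim_d T_{d,p} \geq \tau$.

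For the matching inequality and non-attainment, observe that when $t = \tau$ the left-branch preimage of the height $\Phi(\xi_2)$ is exactly $\xi_1$, so any branch switch away from the $\xi_2$-attractor freezes at the fixed point $\xi_1$ by the recurrence and no finite orbit reaches $R^*$; when $t > \tau$, this preimage lies in $(\xi_1, t)$, whose forward iteration is attracted to the boundary value $t$ and cannot reach $R^* < \xi_1$. A finite case analysis over the at most three available $\Phi$-branches at each step eliminates every remaining alternative, giving $T_{d,p} < \tau$ for all $d$ and hence $\lim_d T_{d,p} = \tau$. Finally, any $(a_k) \in \ell^p_A$ solving the recurrence at $t = \tau$ must have ratios $R_k = a_k/a_{k-1}$ eventually trapped at the attracting fixed point $\xi_2 > 1$, forcing $|a_k|$ to grow geometrically of rate $\xi_2$ and hence $(a_k) \notin \ell^p$. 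The main obstacle in executing this plan is the multi-valued branch bookkeeping in the middle step: verifying rigorously that the four-segment construction genuinely solves the full Lagrange system $\mathscr{L}_d$ (rather than only the bare recurrence in isolation), and excluding every alternative branch sequence from achieving $t \geq \tau$ at any finite $d$.
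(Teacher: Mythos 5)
Your proposal follows essentially the same dynamical-systems route as the paper's own proof: the same functions $\Phi$ and $\Psi$, the same fixed points $\xi_1<t<\xi_2$, terminating orbits from $(pt,0)$ to the exit point $(0,pt^{p-1})$ for a sequence of $t\nearrow\tau$ obtained by a continuity-in-$t$ sweep, non-existence of terminating orbits for $t\geq\tau$, and at-least-exponential growth of the coefficients at $t=\tau$ to rule out attainment in $\ell^p_A$. Your derivative formula $\left.\frac{d}{dR}\bigl[\Phi^{-1}(\Psi(R))\bigr]\right|_{R=\xi}=\xi^{-p}$ is correct (it reduces, after using the fixed-point equation $\xi^{p-1}(pt-\xi)=p-1$, to that same equation) and makes the attracting/repelling classification quantitative rather than pictorial, but otherwise the structure, and the branch-bookkeeping and continuity gaps you honestly flag, coincide with those already present in the paper's argument.
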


\begin{proof}  We proceed under the assumption that $p>2$.  The case when $1<p<2$ is similar, but the graphs of $\Phi$ and $\Psi$ have a different shape. 

Previously we have seen that as $t$ increases from the value 1, $\Phi(\xi_1)$ increases relative to $\Psi(\xi_2)$.  Therefore, for some value $t = \tau$, we have $\Phi(\xi_1) = \Psi(\xi_2)$.  There is also a value $t = T$ such that $\Psi(pT) = \Phi(\xi_1)$.  It is clear that if $t \leq T$, then no orbit originating from $(pt, 0)$ will reach the exit point $(0, pt^{p-1})$: Indeed, otherwise the orbit  must converge towards the fixed point at $x=t$.

For any $T< t<\tau$ fixed, there are infinitely many pre-orbits of $(0, pt^{p-1})$ such that $\Phi(x_m) < \Psi(\xi_2)$ for some $x_m$ satisfying $0 < x_m < \xi_1$.   In between the fixed points these paths tend toward the horizontal line $y = \Phi(\xi_1)$.  

Of course these pre-orbits generally need not strike the point $(pt, 0)$, which would signal a candidate solution to the Lagrange system \eqref{system1}.  

However, the entirety of the graph of $\Phi$ and $\Psi$ and the associated orbits (viewed as a subset of the Euclidean plane) is a continuous object in the parameter $t$ for $1< t<\tau_p$.  
As $t$ increases, the collection of origination points for the pre-orbits of $(0, pt^{p-1})$ are continuously swept toward the left, relative to the point $(pt, 0)$.
Thus as $t$ increases toward $\tau_p$, there are countably many values of $t$ for which the pre-orbit of the exit point $(0, pt^{p-1})$ originates from $(0, pt)$.  The corresponding collections of ratios $R_1$, $R_2$,\ldots, $R_d$ gives rise to a solution to \eqref{system1} for this particular value of $t$.     In fact, by the continuity of the graph relative to $t$, there is such a solution for all $d$ from some integer onward. 

This situation is illustrated in Figure \ref{kksdf5}.  Pre-orbits of the exit point are shown in various shades of green.  As $d$ increases, they get ever closer to the orange path, which traces back to the condition $y = \Phi(\xi_1)$.   As $t$ increases, the $x$-intercepts of the green paths are dragged toward the left where in succession they hit the point $x = pt$.  In this way, we obtain solutions to the Lagrange system for the optimal coefficients, and the corresponding values of $t$, for all $d$ sufficiently large.

\begin{figure}[h]
\centering
\begin{tikzpicture}[scale = 1.0]
\clip (-0.7,-0.75) rectangle + (11,8);
\draw[very thick, black] (-1, 0) -- (11, 0);
\draw[very thick, black] (0, -2) -- (0, 11);
 \draw[very thick, blue] (-0.26, 9) .. controls (1.4, -15.3) and (5.4, 19.8) ..  (8, -2);
  \draw[very thick, red] (0.4, 7) .. controls (0.5, -6.4) and (2.5, 4) ..  (8, 1.9);
      \draw[green, very thick] (7.75, 0) -- (7.75, 2);
       \node [black] at (7.75, 0) {\textbullet};
%         \node at (7.6, -.3) {{\tiny $R_1$}};
       \draw[green, very thick] (0.55,2) -- (7.75, 2);
  %     \node at (0.55, 0) {\textbullet};
%         \node at (0.55, -.3) {{\tiny $R_2$}};
 %         \node [green] at (0.4,2.7) {\textbullet};
%          \node [black] at (0.3, -0.3) {{\tiny $R_3$}};
           \draw[green, very thick] (0.55,2) -- (7.75, 2);
%            \node [black] at (0.4, 0) {\textbullet};
  %           \node [green] at (7.75, 2) {\textbullet};
 %               \node [black] at (0, 0) {\textbullet};
 %                \node [black] at (-0.2, -0.3) {{\tiny $R_4$}};
             \draw[green, very thick] (0.55,2) -- (0.55, 2.7);
%              \node [green] at (0.55,2) {\textbullet};
              \draw[green, very thick] (0.4,2.7) -- (0.55, 2.7);
%               \node [green] at (0.55,2.7) {\textbullet};
               \draw[green, very thick] (0.4,2.7) -- (0.4, 6);
%%               \node [green] at (0.4, 6) {\textbullet};
               \draw[green, very thick] (0, 6) -- (0.4, 6);
%                \node [green] at (0, 6) {\textbullet};
                  \node [black] at (-0.35, 6) {{\tiny $p t^{p - 1}$}};
                    \node at (5,6) {${\color{blue}\Phi(x)}$};
    \node at (5,2.5) {${\color{red}\Psi(x)}$};
      \draw[very thick, red] (0.4, 7) --  (0.22, 10);
        \draw[very thick, red] (8, 1.9) --  (11.6, 0.001);
      \draw[very thick, orange] (9.3, 1.2) -- (0.8, 1.2);
        \draw[very thick, orange] (9.3, 1.2) -- (9.3, 0);
           \node [orange] at (9.3, 0) {\textbullet};
     \draw[black, very thick] (0.55,2) -- (0.65, 2);
          \draw[green, very thick] (0.55,2) -- (0.65, 2);
            \draw[black!30!green, very thick] (0.65,2) -- (0.65, 1.6);
                    \draw[black!60!green, very thick] (0.7, 1.6) -- (0.7, 1.4);
                                      \draw[black!60!green, very thick] (0.7, 1.4) -- (8.94, 1.4);
                                       \draw[black!60!green, very thick] (8.94, 0) -- (8.94, 1.4);
            \draw[black!30!green, very thick] (8.6,1.6) -- (0.65, 1.6);
             \draw[black!30!green, very thick] (8.6,1.6) -- (8.6, 0);
              \node [black] at (8, -0.3) {$p t$};
               \node [green!40!red] at (0.8, -0.3) {$\xi_1$};
                 \node [black] at (0.8, 0) {\textbullet};
                 \end{tikzpicture}
\caption{If $\tau - \epsilon < t < \tau$ then there are infinitely many pre-orbits (two in the case above -- in shades of green) that trace back to the interval from $pt$ to $p t + \delta$. }
\label{kksdf5}
\end{figure}
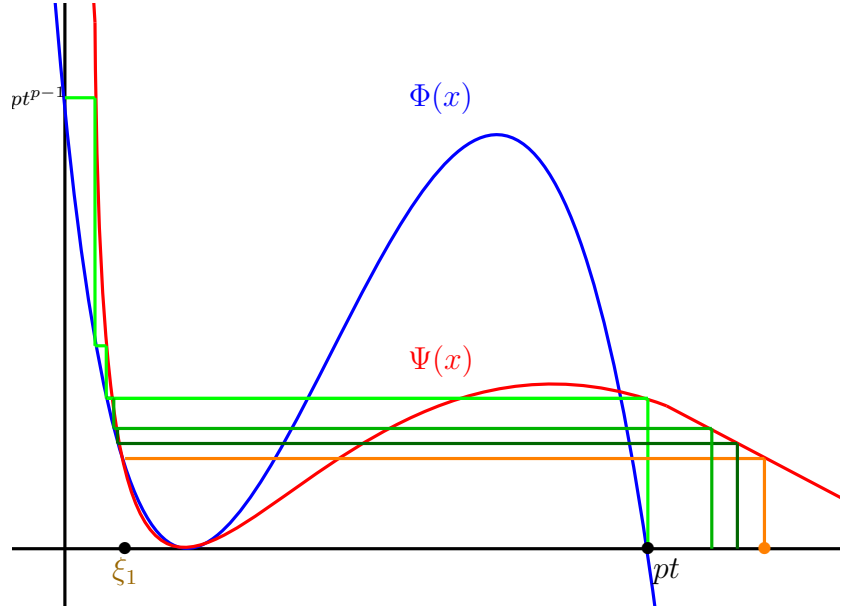

As $t$ continues to increase toward $\tau$, more and more orbit points are needed with abscissas within the interval $(\xi_2, pt)$ in order to escape toward the exit point.  This tells us that there are solutions to \eqref{system1}, but only for sufficiently large $d$.  Figure \ref{kksdf6} displays one such orbit.  More and more zigzags need to occur near the rightmost fixed point in order for the path to find its way to the exit point.

Thus for a sequence of values of $t < \tau_p$, and $d$ sufficiently large, we can find  solutions $R_1$, $R_2$,\ldots, $R_d$ to the system  \eqref{system1}.  Hence $T_{d,p} $ is at least as large as this value of $t$. 

On the other hand, it is clear that if $t \geq \tau_p$, then there are no orbits from $(pt, 0)$ that reach the exit point $(0, pt^{p-1})$.
This proves that $\lim_{d \rightarrow \infty} T_{d,p}  = \tau_p$. See Figure \ref{kksdf6}.
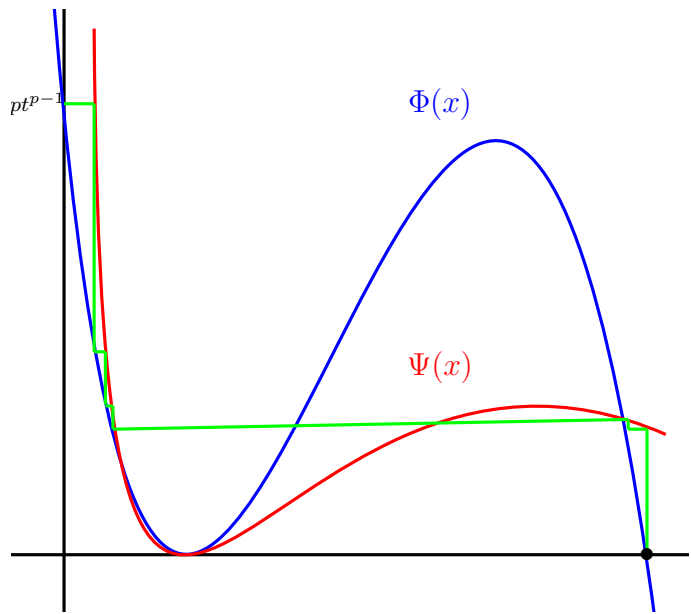
\begin{figure}[h]
\centering
\begin{tikzpicture}[scale = 1.0]
\clip (-0.7,-0.75) rectangle + (9,8);
\draw[very thick, black] (-1, 0) -- (11, 0);
\draw[very thick, black] (0, -2) -- (0, 11);
 \draw[very thick, blue] (-0.26, 9) .. controls (1.4, -15.3) and (5.4, 19.8) ..  (8, -2);
  \draw[very thick, red] (0.4, 7) .. controls (0.5, -6.4) and (2.5, 4) ..  (8, 1.6);
      \draw[green, very thick] (7.75, 0) -- (7.75, 1.67);
        \draw[green, very thick] (7.5, 1.67) -- (7.75, 1.67);
          \draw[green, very thick] (7.5, 1.67) -- (7.5, 1.8);
             \draw[green, very thick] (0.65, 1.67) -- (7.5, 1.8);
              \draw[green, very thick] (0.65, 1.67) -- (0.65, 1.98);
                  \draw[green, very thick] (0.55, 1.98) -- (0.65, 1.98);
       \node [black] at (7.75, 0) {\textbullet};
%         \node at (7.6, -.3) {{\tiny $R_1$}};
 %      \draw[green, very thick] (0.55,2) -- (7.75, 2);
  %     \node at (0.55, 0) {\textbullet};
%         \node at (0.55, -.3) {{\tiny $R_2$}};
 %         \node [green] at (0.4,2.7) {\textbullet};
%          \node [black] at (0.3, -0.3) {{\tiny $R_3$}};
  %         \draw[green, very thick] (0.55,2) -- (7.75, 2);
%            \node [black] at (0.4, 0) {\textbullet};
  %           \node [green] at (7.75, 2) {\textbullet};
 %               \node [black] at (0, 0) {\textbullet};
 %                \node [black] at (-0.2, -0.3) {{\tiny $R_4$}};
             \draw[green, very thick] (0.55,2) -- (0.55, 2.7);
%              \node [green] at (0.55,2) {\textbullet};
              \draw[green, very thick] (0.4,2.7) -- (0.55, 2.7);
%               \node [green] at (0.55,2.7) {\textbullet};
               \draw[green, very thick] (0.4,2.7) -- (0.4, 6);
%%               \node [green] at (0.4, 6) {\textbullet};
               \draw[green, very thick] (0, 6) -- (0.4, 6);
%                \node [green] at (0, 6) {\textbullet};
                  \node [black] at (-0.35, 6) {{\tiny $p t^{p - 1}$}};
                    \node at (5,6) {${\color{blue}\Phi(x)}$};
    \node at (5,2.5) {${\color{red}\Psi(x)}$};
 %    \draw[very thick, red] (0.4, 7) --  (0.22, 10);
  %      \draw[very thick, red] (8, 1.9) --  (11.6, 0.001);
\end{tikzpicture}
\caption{When $t$ is less than but very close to $\tau$, numerous orbit points near the rightmost fixed point may be needed in order for the path to find a route toward the exit point.}
\label{kksdf6}
\end{figure}

It remains to show that $\Omega_p$ is open, which is to say that the value $t_f = \tau_p$ is not attained for any $f \in \ell^p_A$.  Such a function would still have to optimize the value of $t$ subject to the constraint $h'(t) = 0$.   The Lagrange multiplier theorem for Banach spaces applies (see, for example, \cite{Lue,Zei}), to the effect that the recurrence relations \eqref{recrelrk} must hold for all $k \in \mathbb{N}$.

 If $t = \tau_p$, however, then the orbit of $(pt, 0)$ converges toward the point $(\xi_2, \Psi(\xi_2))$.  Since $\xi_2 > 1$, this tells us that the corresponding coefficient ratios $R_1$, $R_2$, $R_3$,\ldots exceed $\xi_2$, and hence the coefficients of a solution to the recurrence relations 
\eqref{system1} must increase at least exponentially.   Hence there is not solution $f \in \ell^p_A$ for which $\tau_p$ is attained, and therefore $\Omega_p$ is an open region. See Figure \ref{kksdf8}.
\end{proof}

\begin{figure}[h]
\centering
\begin{tikzpicture}[scale = 1.0]
\clip (-0.5,-0.75) rectangle + (10,8);
\draw[very thick, black] (-1, 0) -- (11, 0);
\draw[very thick, black] (0, -2) -- (0, 11);
 \draw[very thick, blue] (-0.15, 9) .. controls (1.4, -15.3) and (5.4, 19.8) ..  (8, -2);
  \draw[very thick, red] (0.4, 7) .. controls (0.5, -6.4) and (2.5, 4) ..  (8, 1.6);

                    \node at (5,6) {${\color{blue}\Phi(x)}$};
    \node at (5,2.5) {${\color{red}\Psi(x)}$};
 %    \draw[very thick, red] (0.4, 7) --  (0.22, 10);
  %      \draw[very thick, red] (8, 1.9) --  (11.6, 0.001);
  \draw[green, dotted, very thick] (0.65, 1.82) -- (7.49,  1.82);
   \draw[green, very thick] (7.75, 1.7) -- (7.75, 0);
    \draw[green, very thick] (7.44, 1.7) -- (7.75, 1.7);
      \draw[green, very thick] (7.44, 1.7) -- (7.44, 1.8);
\end{tikzpicture}
\caption{The situation when $t = \tau_p$.}
\label{kksdf8}
\end{figure}
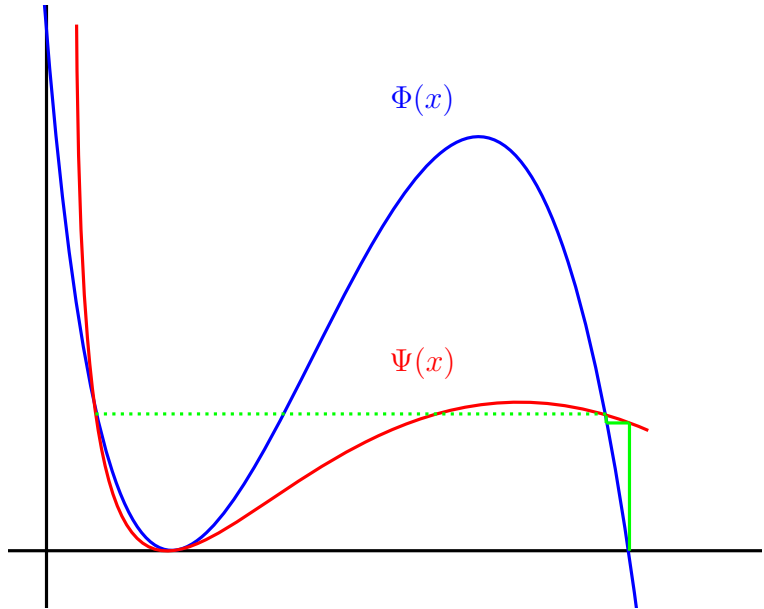

When $1<p<2$, the proof follows the same ideas steps, but with the graphs of $\Phi$ and $\Psi$ having a different character.  From Figure \ref{finaltikz}, we see that $\Phi$ and $\Psi$ have a vertical asymptote at $x=t$, rather than an intercept.  But as before, we are interested in orbits that originate from $(pt,0)$, and which exit at $(0,pt^{p-1})$, with the relative positions of the fixed points playing an important role.

It is challenging to calculate $\tau_p$.  Doing so requires finding that value $\tau_p$ of $t$ for which $\Phi(\xi_1) = \Phi(\xi_2)$, where $\xi_1$ and $\xi_2$ are the two distinct solutions for $x$ satisfying the condition
$
     x^{p-1}(pt-x) = p-1.
$
Here are some approximate values of $\tau_p$, obtained using Mathematica.  The formula for $\tau_p$ is very unstable, as it involves both extremely large numbers as well as extremely small ones; hence we looked only only at even integers for $p$, which made some simplifications possible.

$$\begin{tabular}{r|l}
       $p$ & $\tau_p \approx$ \\ \hline
       4 & 1.21157\\
       6 & 1.37386\\
       8 & 1.47757\\
       10 &  1.54974\\
       12 & 1.60310\\
       14 & 1.64431\\
       16 & 1.67719\\
       18 & 1.70408\\
       20 & 1.72654\\
\end{tabular}$$

\begin{figure}[h]
\centering
\begin{tikzpicture}[scale = 1.0]
\clip (-0.5,-0.75) rectangle + (10,8);
\draw[very thick, black] (-1, 0) -- (11, 0);
\draw[very thick, black] (0, -2) -- (0, 11);
\draw[very thick, dashed, black] (4, 0) -- (4, 11);
 \draw[very thick, blue] (-0.24, 6) .. controls (3, -7.37) and (4.5, 19.8) ..  (2.1, 9);
  \draw[very thick, blue] (4,9) .. controls (7, -2) and (8.5,0) ..  (11, -2);
  \draw[very thick, red] (0.38,5.4) .. controls (0.7, 0.8) and (2.2,0) ..  (4.5, 10);
    \draw[very thick, red] (4.5,6) .. controls (7, -1) and (6.5,6.6) ..  (30, -3);
 \draw[green, very thick] (0.7,3.06) -- (0.7, 3.2);
  \draw[green, very thick] (0.63,3.2) -- (0.7, 3.2);
    \draw[green, very thick] (0.63,3.2) -- (0.63, 3.5);
        \draw[green, very thick] (0.63,3.5) -- (0.5, 3.5);
                \draw[green, very thick] (0.5,3.5) -- (0.5, 4.1);
                               \draw[green, very thick] (0.5,4.1) -- (0.3, 4.1);
                                     \draw[green, very thick] (0.36,5.2) -- (0.3, 4.1);
                                         \draw[green, very thick] (0.36,5.2) -- (0, 5.2);
%  \draw[green, very thick] (0.55,3.4) -- (0.7, 3.4);
%    \draw[green, very thick] (0.55,3.4) -- (0.55, 4.2);
 %       \draw[green, very thick] (0.3,4.2) -- (0.55, 4.2);
 %           \draw[green, very thick] (0.3,4.2) -- (0.3, 5);
                   \draw[green, very thick] (0.7,3.06) -- (6.1, 3.06);
 %               \draw[green, very thick] (6.1,2.8) -- (6.1, 3.1);
 %                 \draw[green, very thick] (6.1,2.8) -- (6.5, 2.8);
                    \draw[green, very thick] (7.8,2.547) -- (7.8, 0);
                     \draw[green, very thick] (6.19,2.547) -- (7.8, 2.547);
                        \draw[green, very thick] (6.19,2.547) -- (6.15, 3.06);
                           \node [black] at (4, -0.4) { $t$};
                            \node [black] at (4, 0) {\textbullet};
                            \node [black] at (7.5, -0.4) { $p t$};
                             \node [black] at (7.8, 0) {\textbullet};
                              \node [black] at (2, -0.4) { $(p - 1) t$};
                               \node [black] at (1.5, 0) {\textbullet};
                                \node at (7.5, 3.5) {${\color{red}\Psi(x) = \frac{(p - 1)}{x |1 - t/x|^{2 - p}}}$};
                                 \node at (7, 5) {${\color{blue}\Phi(x) = \frac{p t - x}{ |x - t|^{2 - p}}}$};
\end{tikzpicture}
\caption{A typical picture of $\Phi$, $\Psi$, and a successful orbit, when $1<p<2$.}
\label{finaltikz}
\end{figure}
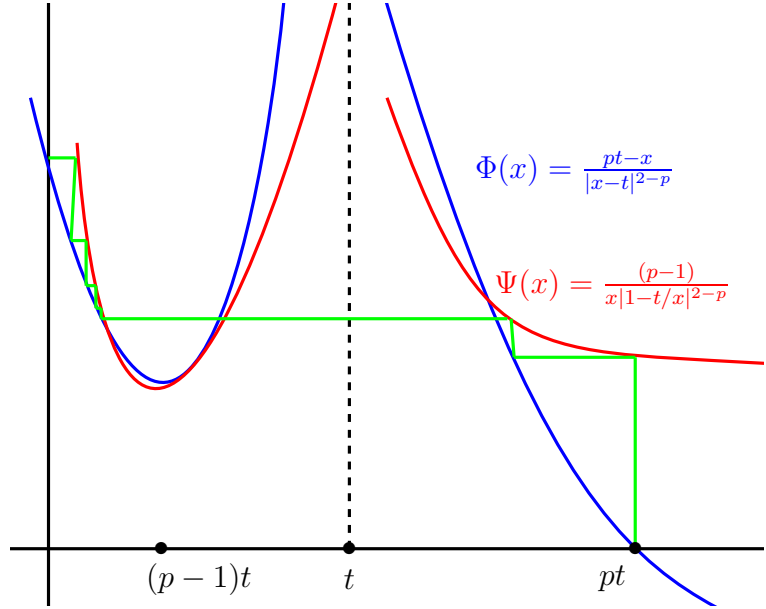

\section{Further directions}

The work in \cite{BenRMI} deals with more general Hilbert spaces. Most of what we presented here can be extended with minimal changes to the context of \emph{weighted} $\ell^p_A$ spaces (like the ones in \cite{SeTe}, that is, defined by a norm $\|f\|^p_{p,\omega} := \sum |a_k|^p \omega_k$ for some weight $\omega$) but the nonlinearity of the recurrence relations is a key obstruction then and the dynamical systems approach of the previous section is not directly feasible.

A few other minor complications are the following:
\begin{enumerate}
\item The fact that $\tau_p<2$ will not be preserved unless strong assumptions are made on the weights.
\item The examples to show that $\tau_p >1$ will need to be adapted but they will cover reasonable non-decreasing weights (doubling, perhaps with restrictions on their growth) and essentially all decreasing weights produce extra zeros for the function $1+ p \sum_{j=1}^k z^j$ for large $k$. In the weighted case for $p=2$, zeros of OPA intersect the disk if and only if the weight is decreasing. Once more, this establishes a clear dichotomy between the case $p=2$ and $p \neq 2$.
\end{enumerate}

Finally, it seems natural to extend this study to any other convex space where polynomials are dense, such as Hardy spaces $H^p$ or Bergman spaces $A^p$, for $1<p <\infty$. When $p=2$, both these spaces were dealt with in \cite{BenRMI} giving qualitatively different behaviors. The Hardy space $H^2$ corresponds to the case mentioned here of $\ell^2_A$, where no extra zeros are found, while $A^2$ produces a region playing the role of our $\Omega_p$ but with the difference of \emph{being closed} and the complement of a disk of radius strictly smaller than 1.
Partly, the case $p=2$ is special in that the recurrence relations \eqref{system1} become linear, and $a_{k+1}$ may be obtained directly from the values of $a_k$ and $a_{k-1}$. The approach in \cite{BenRMI} is based on the classical Favard's Theorem, which establishes a correspondence between solutions to linear recurrence relations, $L^2$ norms of Jacobi matrices and differential equations. This allows us to use the tools from several different mathematical worlds. The main obstruction in the non-Hilbert space case is that the recurrence relations  \eqref{system1} become non-linear and Favard's Theorem is not applicable: the solutions to each of the recurrence relations is non-unique and can only be described implicitly.
 At least for bounded functions, one can expect the zero of their linear approximants to move with some form of continuity when varying the parameter $p$ defining the space $H^p$ or $A^p$. Thus, extra zeros could appear in $A^p$ at least for all $p$ in some small region around $p=2$, while one should, in principle, expect $H^p$ to present no extra zeros. The natural candidate for a function producing extra zeros in $A^p$ is
$$f(z) = \left(1- \frac{z}{\sqrt{2}}\right)^{-3},$$
 the extremizer found in \cite{BenRMI}.

\bigskip

\noindent\textbf{Acknowledgements.} Seco acknowledges financial support by the Spanish Ministry of Economy and Competitiveness, through the ``Severo Ochoa Programme for Centers of Excellence in R\&D'' (CEX2019-000904-S) and through grant PID2019-106433GB-I00; and by the Madrid Government (Comunidad de Madrid-Spain) under the Multiannual Agreement with UC3M in the line of Excellence of University Professors (EPUC3M23), and in the context of the V PRICIT (Regional Programme of Research and Technological Innovation).

\bibliographystyle{plain}

\bibliography{OptimalApprox}

\end{document}